\newtheorem{thm}{Theorem}
\newtheorem{lem}[thm]{Lemma}
\newtheorem{cor}[thm]{Corollary}
\xpatchcmd{\proof}{\itshape}{\normalfont\proofnameformat}{}{}
\newcommand{\proofnameformat}{}
\begin{document}
	
	\renewcommand{\proofnameformat}{\bfseries}
	
	\begin{center}
		{\Large\textbf{Limit laws for cotangent and Diophantine sums}}
		
		\vspace{10mm}
		
		\textbf{Bence Borda, Lorenz Fr\"uhwirth and Manuel Hauke}
		
		\vspace{5mm}
		
		{\footnotesize Graz University of Technology
			
			Steyrergasse 30, 8010 Graz, Austria
			
			Email: \texttt{borda@math.tugraz.at, fruehwirth@math.tugraz.at, hauke@math.tugraz.at}}
		
		\vspace{5mm}
		
		{\footnotesize \textbf{Keywords:} Diophantine approximation, continued fraction, circle rotation, ergodic sum, stable law}
		
		{\footnotesize \textbf{Mathematics Subject Classification (2020):} 11J83, 11K50, 11L03, 37E10, 60F05}
	\end{center}
	
	\vspace{5mm}
	
	\begin{abstract} Limit laws for ergodic averages with a power singularity over circle rotations were first proved by Sinai and Ulcigrai, as well as Dolgopyat and Fayad. In this paper, we prove limit laws with an estimate for the rate of convergence for the sum $\sum_{n=1}^N f(n \alpha)/n^p$ in terms of a $1$-periodic function $f$ with a power singularity of order $p \ge 1$ at integers. Our results apply in particular to cotangent sums related to Dedekind sums, and to sums of reciprocals of fractional parts, which appear in multiplicative Diophantine approximation. The main tools are Schmidt's method in metric Diophantine approximation, the Gauss--Kuzmin problem and the theory of $\psi$-mixing random variables.
	\end{abstract}
	
	\section{Introduction}
	
	Cotangent and cosecant sums such as
	\[ \sum_{n=1}^N \frac{\cot (\pi n \alpha)}{n}, \qquad \sum_{n=1}^N \frac{1}{n \sin (\pi n \alpha)}, \qquad \sum_{n=1}^N \frac{1}{n^p |\sin (\pi n \alpha)|^q} \]
	at various irrational $\alpha$ were first studied by Hardy and Littlewood \cite{HL1,HL2,HL3} in connection with Diophantine approximation and lattice point counting. Further connections to Dedekind sums, real quadratic fields and modularity relations were explored in \cite{BE,BER,BS,CM1,CM2,LRR,RI}. The sums are sensitive to the Diophantine properties of $\alpha$. For instance, the first two sums are $O(\log N)$ for a badly approximable $\alpha$, and this is sharp in general. In this paper, we consider similar sums at a randomly chosen $\alpha$, and find their limit distributions.
	
	Throughout, $\alpha$ is a $[0,1]$-valued random variable whose distribution $\mu$ is absolutely continuous with respect to the Lebesgue measure $\lambda$, denoted by $\mu \ll \lambda$. Sometimes we also assume that $\mu$ has a positive Lipschitz density, that is,
	\begin{equation}\label{lipschitz}
		\left| \frac{\mathrm{d} \mu}{\mathrm{d}\lambda} (x) - \frac{\mathrm{d} \mu}{\mathrm{d}\lambda} (y) \right| \le L |x-y| \qquad \textrm{and} \qquad \frac{\mathrm{d} \mu}{\mathrm{d}\lambda} (x) \ge A
	\end{equation}
	for all $x,y \in [0,1]$ with some constants $L \ge 0$ and $A>0$. Throughout the paper, $L$ and $A$ always denote the constants in \eqref{lipschitz}.
	
	Let $\mathrm{Stab}(\alpha_0, \beta_0)$ denote the standard stable law with stability index $\alpha_0 \in (0,2]$ and skewness parameter $\beta_0 \in [-1,1]$, defined as the probability distribution on $\mathbb{R}$ whose characteristic function is
	\[ \varphi (t)= \exp \left( - |t|^{\alpha_0} \left( 1- i \beta_0 \, \mathrm{sgn} (t) \omega \right) \right) \quad \textrm{with} \quad \omega = \left\{ \begin{array}{ll} \tan \frac{\alpha_0 \pi}{2} & \textrm{if } \alpha_0 \neq 1, \\ - \frac{2}{\pi} \log |t| & \textrm{if } \alpha_0=1 . \end{array} \right. \]
	We write simply ``Cauchy'' for the standard Cauchy distribution $\mathrm{Stab}(1,0)$. We use the notation $\overset{d}{\to}$ for convergence in distribution, and $X \sim \nu$ if a random variable $X$ has distribution $\nu$. If $X_N \overset{d}{\to} \nu$ and $\nu$ has a continuous cumulative distribution function $F$, then the rate of convergence in the Kolmogorov metric is defined as $\sup_{x \in \mathbb{R}} |\Pr (X_N \le x) - F(x)|$.
	
	\begin{thm}\label{cotangenttheorem} Let $\alpha \sim \mu$ with some $\mu \ll \lambda$. Then
		\[ \frac{1}{\log N} \sum_{n=1}^N \frac{\cot (\pi n \alpha)}{n} \overset{d}{\to} \mathrm{Cauchy} \qquad \textrm{and} \qquad \frac{1}{\log N} \left( \sum_{n=1}^N \frac{|\cot (\pi n \alpha)|}{n} -E_N \right) \overset{d}{\to} \mathrm{Stab}(1,1), \]
		where $E_N = \frac{1}{\pi} (\log N)^2 + \frac{2}{\pi} \log N \log \log N -c \log N$ with the constant
		\[ c= \frac{2}{\pi} \left( \gamma + \log \frac{\pi^2}{6} - \frac{12 \zeta'(2)}{\pi^2} -1 \right) = 0.77338986 \ldots . \]
		Under the assumption \eqref{lipschitz}, in both relations the rate of convergence in the Kolmogorov metric is $\ll (\log N)^{-1/4} (\log \log N)^{1/4}$ with an implied constant depending only on $L$ and $A$.
	\end{thm}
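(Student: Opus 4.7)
The plan is a three-stage reduction: decompose $\cot(\pi x)$ into a singular part plus a bounded remainder; use the continued-fraction expansion of $\alpha$ to express the singular part as a sum over convergent denominators involving the partial quotients $(a_k)$; and finally apply a quantitative stable limit theorem for $\psi$-mixing sequences. I would begin with the partial-fraction decomposition
\[ \cot(\pi x) = \frac{1}{\pi\{x\}} - \frac{1}{\pi(1-\{x\})} + R(\{x\}), \]
where $R$ is smooth on $[0,1]$ with $\int_0^1 R = 0$ (visible from $\int_\varepsilon^{1-\varepsilon} \cot(\pi x)\,\mathrm{d}x = 0$ by symmetry). An analogous identity with the two polar terms added instead of subtracted holds for $|\cot(\pi x)|$. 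The bounded contribution $\sum_{n=1}^N R(n\alpha)/n$ is $o(\log N)$ in probability by Koksma's inequality combined with the metric discrepancy bound $D_N^*(\alpha) \ll (\log N)(\log\log N)/N$, hence negligible after dividing by $\log N$.

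For the singular part, the three-distance theorem and the Ostrowski expansion of $n$ in base $(q_k)$ show that the dominant contribution comes from $n = q_k$ with $q_k \le N$. Writing $\|q_k\alpha\| = 1/(q_{k+1}+\beta_{k+1}q_k)$ with $\beta_{k+1} = [0;a_{k+2},a_{k+3},\ldots]$ and using $\mathrm{sgn}(q_k\alpha-p_k) = (-1)^k$, one obtains
\[ \frac{\cot(\pi q_k\alpha)}{q_k} = \frac{(-1)^k}{\pi}(a_{k+1}+\beta_{k+1}) + O(q_k^{-2}), \]
so that the signed sum reduces to $(1/\pi)\sum_{k=1}^{K(N)}(-1)^k a_{k+1} + o(\log N)$ and the absolute-value sum to $(1/\pi)\sum_{k=1}^{K(N)} a_{k+1} + o(\log N)$, where $K(N) = \max\{k : q_k \le N\}$. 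By the Gauss--Kuzmin theorem together with Schmidt's quantitative method, the partial quotients of $\alpha\sim\mu$ are exponentially $\psi$-mixing with limiting tail $\Pr(a > t) = (\log 2)^{-1} t^{-1} + O(t^{-2})$, placing them in the Cauchy ($\alpha_0 = 1$) domain of attraction; assumption \eqref{lipschitz} serves to transfer quantitative mixing from the Gauss measure to $\mu$.

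I would then apply a quantitative stable limit theorem for $\psi$-mixing sequences in the Cauchy domain: the sum $\sum_{k=1}^K(-1)^k a_{k+1}$, divided by $K$, converges to $\mathrm{Cauchy}$, while $\sum_{k=1}^K a_{k+1}$, divided by $K$ after a centering of order $K\log K$, converges to $\mathrm{Stab}(1,1)$; both convergences hold at rate $\ll K^{-1/4}(\log K)^{1/4}$ in the Kolmogorov metric. A quantitative Lévy theorem $K(N) = (12\log 2/\pi^2)\log N + O(\sqrt{\log N}\log\log N)$ (valid under \eqref{lipschitz}) then translates this rate in $K$ to the claimed rate $(\log N)^{-1/4}(\log\log N)^{1/4}$ in $\log N$.

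I expect the principal obstacle to be the quantitative stable CLT for $\psi$-mixing sequences with sharp rate $K^{-1/4}(\log K)^{1/4}$, which calls for a blocking/characteristic-function argument adapted to heavy-tailed summands. A second delicate point is matching the lower-order constants in $E_N$: the leading $(1/\pi)(\log N)^2$ comes from $(1/\pi)K\log K$ via Lévy's theorem, the $(2/\pi)\log N\log\log N$ from the next-order truncated-Cauchy centering, and the explicit constant $c = (2/\pi)(\gamma + \log(\pi^2/6) - 12\zeta'(2)/\pi^2 - 1)$ assembles from the Gauss--Kuzmin tail expansion to subleading order together with arithmetic lower-order terms in Lévy's theorem.
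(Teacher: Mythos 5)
Your high-level roadmap (decompose, reduce to continued fractions, apply a stable CLT for $\psi$-mixing sequences) correctly identifies the tools, but the middle step---what the singular part actually reduces to---contains several errors that would derail the proof.

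The most serious is the treatment of the terms away from the singularity. After you split off a bounded remainder $R$, you are left with $\sum_{n\le N} \frac{1}{\pi n\{n\alpha\}}$ and $\sum_{n\le N}\frac{1}{\pi n(1-\{n\alpha\})}$, and you assert that the dominant contribution comes from $n = q_k$ and everything else is $o(\log N)$. This is false: the terms with $\|n\alpha\|\ge 1/(2n)$ (not multiples of $q_k$) sum to $\frac{1}{2\pi}(\log N)^2 + o(\log N)$ for \emph{each} sign of $\langle n\alpha\rangle$, and this deterministic $(\log N)^2$ term is precisely the leading part of $E_N$. You instead attribute the $(1/\pi)(\log N)^2$ in $E_N$ to ``$(1/\pi)K\log K$ via L\'evy's theorem,'' but with $K \asymp \frac{12\log 2}{\pi^2}\log N$ we have $K\log K \asymp \log N\log\log N$, which is the \emph{second}-order term of $E_N$, not the leading one; the orders are off by a full power of $\log N/\log\log N$. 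In the paper this $(\log N)^2$ concentration is established via a nontrivial variance estimate (a generalization of Schmidt's second-moment method for $\sum\gamma_{n,k}$ with $f_n(x)=\mathds 1_{[1/(2n),1/2]}(x)/(nx)$), and it is not something that can be dismissed as $o(\log N)$.

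Second, even among the near-singularity terms you keep only $n=q_k$, whereas Legendre's theorem shows that $\|n\alpha\|<1/(2n)$ forces $n=jq_k$ for all $1\le j < (2q_k\|q_k\alpha\|)^{-1/2}$, and the sum over these $j$ produces an extra factor $\sum_j 1/j^2 \approx \pi^2/6$. Your reduced sum $(1/\pi)\sum(-1)^k a_{k+1}$ should be $(\pi/6)\sum(-1)^k a_{k+1}$ (plus corrections), and this factor of $\pi^2/6$ is exactly what makes the normalization $\log N$ yield the \emph{standard} Cauchy/$\mathrm{Stab}(1,1)$ rather than a rescaled one---a quick check with the Gauss--Kuzmin tail $\mu(a_1>t)\sim \frac{1}{t\log 2}$ confirms your constant is off by $\pi^2/6$. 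Third, you replace $a_{k+1}+\beta_{k+1}$ by $a_{k+1}$ at a cost ``$o(\log N)$,'' but $\sum_{k\le K}\beta_{k+1}$ has mean of order $K\asymp\log N$, not $o(\log N)$; this expectation $\kappa$ is a definite constant that enters the linear term $-c\log N$ of $E_N$, and computing it (via the conditional-expectation computation the paper carries out with its function $W$) is necessary to obtain $c$ explicitly. So your plan would lose both the leading term of $E_N$ and the correct value of $c$, and would produce a non-standard normalizing constant, even if the stable CLT for $\psi$-mixing sums with rate $K^{-1/4}(\log K)^{1/4}$ were supplied.
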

	
	Here $\gamma$ denotes the Euler--Mascheroni constant and $\zeta'(2)=-\sum_{n=1}^{\infty} n^{-2} \log n$ is the derivative of the Riemann zeta function at the point 2. We also prove similar limit laws for cotangent sums with higher powers. For instance, we will show that if $\alpha \sim \mu$ with some $\mu \ll \lambda$, then
	\begin{equation}\label{cotangentsquare}
		\frac{1}{\frac{4}{5\pi} (\log N)^2} \sum_{n=1}^N \frac{\cot^2 (\pi n \alpha)}{n^2} \overset{d}{\to} \mathrm{Stab}(1/2,1) ,
	\end{equation}
	as well as
	\begin{equation}\label{cotangentcube}
		\frac{1}{\sigma (\log N)^3} \sum_{n=1}^N \frac{\cot^3 (\pi n \alpha)}{n^3} \overset{d}{\to} \mathrm{Stab} (1/3,0) \quad \textrm{and} \quad \frac{1}{\sigma (\log N)^3} \sum_{n=1}^N \frac{|\cot (\pi n \alpha)|^3}{n^3} \overset{d}{\to} \mathrm{Stab} (1/3,1)
	\end{equation}
	with $\sigma = 64/(35 \Gamma (1/3)^3)$, where $\Gamma$ is the gamma function. Under the assumption \eqref{lipschitz}, the rate of convergence in the Kolmogorov metric is $\ll (\log N)^{-1/3} (\log \log N)^{1/3}$ in \eqref{cotangentsquare} resp.\ $\ll (\log N)^{-1/3}$ in both relations in \eqref{cotangentcube}.
	
	Theorem \ref{cotangenttheorem} and relations \eqref{cotangentsquare} and \eqref{cotangentcube} are illustrations of limit laws for the more general sum $\sum_{n=1}^N f(n \alpha) /n^p$ in terms of a $1$-periodic function $f$ with a $1/x^p$ or $1/|x|^p$-type singularity at integers, see the main results Theorems \ref{maintheorem} and \ref{p>1theorem} and Corollaries \ref{maincorollary} and \ref{p>1corollary} below.
	
	Our results should be compared to a theorem of Sinai and Ulcigrai \cite{SU}, who proved that if $(\alpha, \beta)$ is a random variable uniformly distributed on the unit square, then the normalized sum $N^{-1} \sum_{n=1}^N \cot (\pi (n \alpha + \beta))$ converges in distribution. The limit distribution was not explicitly identified, however. Their proof is based on the renewal theory of continued fractions, and applies to more general sums $N^{-1} \sum_{n=1}^N f(n \alpha + \beta)$ in terms of a $1$-periodic function with a $1/x$-type singularity at integers. Higher order power singularities were investigated by Dolgopyat and Fayad \cite{DF} using dynamical methods, in which case the limit distributions can be described in terms of random lattices. See also Kesten \cite{KE1,KE2} for the case of bounded $f$. The proofs in \cite{DF,SU} do not yield a rate of convergence.
	
	In contrast, we obtain classical stable laws as limit distributions, and an estimate for the rate of convergence. Our proofs are based on a combination of three main tools. Firstly, we generalize a classical method of Schmidt in metric Diophantine approximation \cite{HA,SCH}. Secondly, we use quantitative mixing properties of continued fractions related to the Gauss--Kuzmin problem. Finally, we use the theory of $\psi$-mixing random variables from probability theory \cite{BR}.
	
	\section{Main results}\label{mainsection}
	
	Let $\langle \cdot \rangle$ resp.\ $\| \cdot \|$ denote the signed resp.\ unsigned distance from the nearest integer function. That is, let $\langle x \rangle = x$ and $\| x \| = |x|$ on the interval $x \in (-1/2,1/2]$, and extend them with period $1$.
	
	Let $\nu_1 \otimes \nu_2$ denote the product of the measures $\nu_1$ and $\nu_2$. If a sequence of $\mathbb{R}^2$-valued random variables $X_N$ converges in distribution to $\nu$ and $\nu$ has a continuous cumulative distribution function $F$, then the rate of convergence in the Kolmogorov metric is defined similarly as
	\[ \sup_{(x,y) \in \mathbb{R}^2} | \Pr (X_N \in (-\infty, x] \times (-\infty, y] ) - F(x,y) | . \]
	
	\subsection{Singularities of order 1}
	
	Our main result is a joint limit law for the sum of positive and negative parts of $f(n\alpha)/n$, where $f$ has a $1/x$-type singularity at integers. The indicator of a set or relation $S$ is denoted by $\mathds{1}_S$.
	\begin{thm}\label{maintheorem} Let $f_1(x)=\frac{1}{\langle x \rangle} \mathds{1}_{\{ \langle x \rangle >0 \}} +g_1(x)$ and $f_2(x)=\frac{1}{|\langle x \rangle|} \mathds{1}_{\{ \langle x \rangle <0 \}} +g_2(x)$ with some $1$-periodic functions $g_1$ and $g_2$ that are of bounded variation on $[0,1]$. Let $\alpha \sim \mu$ with some $\mu \ll \lambda$. Then
		\[ \left( \frac{\sum_{n=1}^N f_1(n \alpha) /n - E_{1,N}}{(\pi /2) \log N}, \frac{\sum_{n=1}^N f_2(n \alpha) /n - E_{2,N}}{(\pi /2) \log N} \right) \overset{d}{\to} \mathrm{Stab} (1,1) \otimes \mathrm{Stab} (1,1) , \]
		where $E_{j,N} = \frac{1}{2} (\log N)^2 + \log N \log \log N - c_j \log N$, $j=1,2$, with the constants
		\[ c_j= \gamma + \log \frac{2 \pi}{3} - \frac{12 \zeta'(2)}{\pi^2} -1 - \int_0^1 g_j(x) \, \mathrm{d}x . \]
		Assuming \eqref{lipschitz}, the rate of convergence in the Kolmogorov metric is $\ll (\log N)^{-1/4} (\log \log N)^{1/4}$ with an implied constant depending only on $L$, $A$, $g_1$ and $g_2$.
	\end{thm}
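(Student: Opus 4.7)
The plan is to reduce the joint limit law to a joint stable CLT for sums of partial quotients of $\alpha$, handled by combining the three ingredients mentioned in the introduction. First I would dispatch the bounded-variation pieces: by Koksma's inequality and partial summation, for $\alpha \sim \mu$ with $\mu \ll \lambda$ one has
\[ \sum_{n=1}^N \frac{g_j(n\alpha)}{n} = \log N \int_0^1 g_j(x)\,\mathrm{d} x + O(1), \]
and the first-order term is absorbed into $c_j$. It then suffices to prove the joint limit for the purely singular sums
\[ S_{1,N} = \sum_{n=1}^N \frac{\mathds{1}_{\{\langle n\alpha\rangle>0\}}}{n\langle n\alpha\rangle}, \qquad S_{2,N} = \sum_{n=1}^N \frac{\mathds{1}_{\{\langle n\alpha\rangle<0\}}}{n|\langle n\alpha\rangle|}. \]

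Next I would use the continued fraction expansion $\alpha=[0;a_1,a_2,\ldots]$ with convergents $p_k/q_k$, and set $K(N)$ to be the largest index with $q_{K(N)} \le N$. The Ostrowski expansion together with the three-distance theorem organize the values $\{\langle n\alpha\rangle : n \le N\}$ into clusters attached to each scale $\langle q_k\alpha\rangle$; summing $1/(n\langle n\alpha\rangle)$ within each cluster produces a harmonic-type contribution of order $a_{k+1}$. Since $\mathrm{sgn}(\langle q_k\alpha\rangle)$ alternates with the parity of $k$, clusters with $k$ of one parity feed $S_{1,N}$ and those of the opposite parity feed $S_{2,N}$. This yields a decomposition
\[ S_{j,N} = D_{j,N} + c_\ast \sum_{k \in I_j(N)} a_{k+1} + R_{j,N}, \]
where $I_1(N), I_2(N) \subset \{1,\ldots,K(N)\}$ are the two parity classes, $D_{j,N}$ is a Hardy--Littlewood-type deterministic main term contributing the $\tfrac{1}{2}(\log N)^2$ part of $E_{j,N}$, and the error $R_{j,N}$ is controlled by Schmidt's method in metric Diophantine approximation, which delivers the sharp almost-sure bounds needed on the tails of $\sum a_{k+1}$.

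The analytic heart of the proof is then a joint stable CLT for the two parity sub-sums of $a_{k+1}$. The quantitative Gauss--Kuzmin theorem guarantees that under $\mu\ll\lambda$ the partial quotient sequence $(a_k)$ is $\psi$-mixing at exponential rate, with marginals rapidly equilibrating to the Gauss distribution, which has tail $\Pr(a_k>t)\sim 1/(t\log 2)$ and thus lies in the domain of attraction of $\mathrm{Stab}(1,1)$. Applying a stable CLT for $\psi$-mixing heavy-tailed sequences (as in Bradley's monograph) yields convergence of each properly centered and normalized sub-sum to $\mathrm{Stab}(1,1)$. Joint convergence to the product law $\mathrm{Stab}(1,1) \otimes \mathrm{Stab}(1,1)$ follows because the two coordinates are built from disjoint parity sub-sequences, which decouple asymptotically under $\psi$-mixing. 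The normalization $(\pi/2) \log N$ and the centering term $\log N \log\log N$ arise from combining the standard $\mathrm{Stab}(1,1)$ scaling, the Gauss tail constant $1/\log 2$, and L\'evy's theorem $K(N) \sim (12\log 2/\pi^2) \log N$.

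The main obstacle is the quantitative bookkeeping through all of these reductions --- Koksma's remainder, the Ostrowski truncation, the Gauss--Kuzmin mixing rate, a Berry--Esseen-type estimate in the $\psi$-mixing stable CLT, and the coupling cost between the two parity sub-sequences --- all measured in the Kolmogorov metric. The resulting rate $(\log N)^{-1/4}(\log\log N)^{1/4}$ reflects the standard trade-off in quantitative stable limit theorems for Pareto-1 variables: truncating to a smaller tail to control the CLT error while keeping the truncation remainder negligible, a compromise inherently limited by the fact that the Gauss distribution on the partial quotients has no moments beyond the first.
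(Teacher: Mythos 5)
Your high-level outline matches the paper's (away-from-singularity via Schmidt's method, close-to-singularity via convergent denominators, stable CLT for the two parity subsequences of partial quotients, quantified via Gauss--Kuzmin mixing). However, several of the steps as you describe them would not go through, and the rate you ultimately claim comes from the wrong place.

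First, the bounded-variation reduction. You assert
\[ \sum_{n=1}^N \frac{g_j(n\alpha)}{n} = \log N \int_0^1 g_j + O(1). \]
This is false as a deterministic statement: Koksma plus partial summation gives an error $\ll \sum_{n<N} D_n(\alpha)/n^2 + D_N(\alpha)/N$, and the discrepancy $D_n(\alpha)$ is bounded only by $2(a_1+\cdots+a_{k+1})$ for $q_k\le n<q_{k+1}$, which is unbounded over $\alpha$. The correct statement (the paper's Lemma \ref{bvlemma}) is an in-measure tail bound, $\mu\bigl(|\cdot| \ge (\log N)^{1/2}\bigr) \ll (\log N)^{-1/2}$, obtained by feeding the tail estimate for $\sum a_k$ through a Fibonacci-weighted sum. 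Without this probabilistic form, your reduction does not yield a Kolmogorov-metric rate.

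Second, the near-singularity contribution at scale $k$ is not simply $c_\ast a_{k+1}$. Legendre's theorem (not the three-distance theorem, though the intuition is related) shows that $\|n\alpha\|<1/(2n)$ forces $n=jq_k$, and the resulting cluster sum equals $R(u_{k+1})$ where $u_{k+1}=1/(q_k\|q_k\alpha\|)$ and $R(x)=\sum_{1\le j<(x/2)^{1/2}} x/j^2$. The function $R$ is piecewise linear with jumps, and $u_{k+1}$ depends on the whole tail $[a_{k+1};a_{k+2},\ldots]$ plus the whole head $[0;a_k,\ldots,a_1]$, not just $a_{k+1}$. Passing from $R(u_k)$ to $R(a_k)$ introduces a deterministic drift $\kappa K_N$ (computed via a delicate conditional-expectation calculation against the Gauss measure, Lemmas \ref{conditionallemma}--\ref{expectedvaluelemma}), and this drift enters the constant $c_j$. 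Your decomposition $S_{j,N}=D_{j,N}+c_\ast\sum a_{k+1}+R_{j,N}$ with a purely deterministic $D_{j,N}$ misses this, and in particular cannot recover the exact constant $\gamma+\log(2\pi/3)-12\zeta'(2)/\pi^2-1$; the term $-12\zeta'(2)/\pi^2$ comes precisely from the telescoping evaluation of $\sum_m(R(\text{something})-\tfrac{\pi^2}{6}(\text{something}))$. Also, Schmidt's method is applied to the \emph{away-from-singularity} sum (to show it concentrates at $\tfrac{1}{2}(\log N)^2$ via a second-moment inequality); it has nothing to do with tails of $\sum a_{k+1}$, which are controlled by the Diamond--Vaaler estimate.

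Third, the rate. You attribute $(\log N)^{-1/4}(\log\log N)^{1/4}$ to a truncation trade-off inside the stable CLT for Pareto-1 tails. That is not the bottleneck. Heinrich's method (not a ready-made theorem from Bradley's survey, which contains no such Berry--Esseen statement) combined with Sadikova's two-dimensional Esseen inequality gives the joint stable CLT for $(\sum_{k\text{ odd}},\sum_{k\text{ even}})R(a_k)$ with rate $K^{-1/2}\log K\approx (\log N)^{-1/2}\log\log N$, which is \emph{better} than $(\log N)^{-1/4}$. The genuine bottleneck is the random truncation index $K_N^*$ with $q_{K_N^*}\le N<q_{K_N^*+1}$: the CLT for $\log q_n$ gives $|K_N^*-K_N|\ll(\log N\log\log N)^{1/2}$ only with probability $1-O((\log N)^{-1/2})$, and the resulting boundary contribution $\sum_{|k-K_N|\ll(\log N\log\log N)^{1/2}}a_k$ costs a factor $(\log N)^{-1/4}(\log\log N)^{1/4}$ via Markov and the Diamond--Vaaler tail bound. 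Your proposed mechanism would not give this exponent.

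In short: right skeleton, but the claimed BV error is wrong in kind (deterministic vs.\ probabilistic), the cluster-to-$a_k$ step glosses over the nonlinear correction that produces the constants and requires a conditional Gauss--Kuzmin argument, there is no off-the-shelf quantitative stable CLT to invoke, and the rate analysis misidentifies the dominant error.
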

	
	If $(X,Y) \sim \mathrm{Stab}(1,1) \otimes \mathrm{Stab}(1,1)$, then $(X-Y)/2 \sim \mathrm{Cauchy}$ and $(X+Y)/2-(2 \log 2)/\pi \sim \mathrm{Stab}(1,1)$, as seen from the characteristic functions. The limit law of the difference resp.\ sum of the coordinates of the random vector in Theorem \ref{maintheorem} thus corresponds to the following.
	\begin{cor}\label{maincorollary} Let $g$ be a $1$-periodic function that is of bounded variation on $[0,1]$. Let $\alpha \sim \mu$ with some $\mu \ll \lambda$. Then
		\[ \frac{1}{\pi \log N} \left( \sum_{n=1}^N \frac{\frac{1}{\langle n \alpha \rangle} + g(n \alpha)}{n} - \log N \int_0^1 g(x) \, \mathrm{d}x \right) \overset{d}{\to} \mathrm{Cauchy} \]
		and
		\[ \frac{1}{\pi \log N} \left( \sum_{n=1}^N \frac{\frac{1}{\| n \alpha \|} + g (n \alpha)}{n} - E_N \right) \overset{d}{\to} \mathrm{Stab}(1,1) , \]
		where $E_N = (\log N)^2 + 2 \log N \log \log N - c \log N$ with the constant
		\[ c=2 \left( \gamma + \log \frac{\pi}{3} - \frac{12 \zeta'(2)}{\pi^2} -1 \right) - \int_0^1 g(x) \, \mathrm{d}x . \]
		Under the assumption \eqref{lipschitz}, in both relations the rate of convergence in the Kolmogorov metric is $\ll (\log N)^{-1/4} (\log \log N)^{1/4}$ with an implied constant depending only on $L$, $A$ and $g$.
	\end{cor}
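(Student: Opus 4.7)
The corollary should be deduced directly from Theorem \ref{maintheorem} together with the remark preceding its statement. Setting
\[ h_+(x) = \frac{1}{\langle x \rangle} \mathds{1}_{\{\langle x \rangle >0\}}, \qquad h_-(x) = \frac{1}{|\langle x \rangle|} \mathds{1}_{\{\langle x \rangle <0\}}, \]
one has $1/\langle x \rangle = h_+(x)-h_-(x)$ and $1/\|x\| = h_+(x)+h_-(x)$ whenever $x \notin \mathbb{Z}$. The plan is to apply Theorem \ref{maintheorem} with the choice $f_1 = h_+ + g$ and $f_2 = h_-$, which has the useful feature that $f_1-f_2 = 1/\langle x \rangle + g$ handles the first assertion of the corollary while $f_1+f_2 = 1/\|x\| + g$ handles the second. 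Writing $S_{j,N} = \sum_{n=1}^N f_j(n\alpha)/n$, Theorem \ref{maintheorem} then supplies
\[ \left( \frac{S_{1,N}-E_{1,N}}{(\pi/2) \log N}, \frac{S_{2,N}-E_{2,N}}{(\pi/2)\log N} \right) \overset{d}{\to} (X,Y) \sim \mathrm{Stab}(1,1) \otimes \mathrm{Stab}(1,1) . \]

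For the Cauchy assertion, one takes the difference of the two coordinates and divides by $2$; the limit is $(X-Y)/2 \sim \mathrm{Cauchy}$. With $(g_1,g_2)=(g,0)$, the definitions of $c_1,c_2$ yield $c_1-c_2 = -\int_0^1 g$, hence $E_{1,N}-E_{2,N} = \log N \int_0^1 g(x)\,\mathrm{d}x$, matching the required centering. For the $\mathrm{Stab}(1,1)$ assertion, one takes the sum of the two coordinates divided by $2$, whose limit equals $\mathrm{Stab}(1,1) + (2\log 2)/\pi$ in distribution. A short computation using $\log(2\pi/3) = \log (\pi/3) + \log 2$ gives $c_1+c_2-c = 2\log 2$, so $E_{1,N}+E_{2,N} = E_N - 2(\log 2)\log N$; the additional constant $2(\log 2)/\pi$ arising after normalization is exactly what realigns $(X+Y)/2$ with $\mathrm{Stab}(1,1)$.

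The main obstacle is preserving the Kolmogorov rate $\ll (\log N)^{-1/4} (\log\log N)^{1/4}$ under the 2D-to-1D projection. A pure continuous-mapping argument applied only to the bivariate Kolmogorov bound would deteriorate the rate, because the $1/x^2$-type tail of $\mathrm{Stab}(1,1)$ forces losses in the standard rectangular approximation of a half-plane. I would circumvent this by exploiting that the proof of Theorem \ref{maintheorem} (resting on the $\psi$-mixing/Gauss--Kuzmin machinery announced in the introduction) naturally produces estimates on the joint characteristic function of $(S_{1,N},S_{2,N})$ at the same rate; evaluating such an estimate at arguments $(t,t)$ and $(t,-t)$ yields characteristic-function estimates for $S_{1,N}\pm S_{2,N}$, from which Esseen's smoothing inequality recovers 1D Kolmogorov bounds of the same order. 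In short, the whole probabilistic content of the corollary is contained in Theorem \ref{maintheorem}; the only delicate point is to extract the rate from the characteristic-function form of the joint limit law rather than from its bivariate Kolmogorov form.
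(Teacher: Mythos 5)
Your proposal is correct and matches the paper's own route: the paper deduces the corollary by reusing the proof of Theorem \ref{maintheorem}, replacing the two-dimensional Sadikova inequality by the one-dimensional Esseen inequality applied to the characteristic function of $(X_K\pm Y_K)/2$ (this is relation \eqref{XkplusminusYk} in Lemma \ref{sumRaklemma}, obtained precisely by evaluating the joint characteristic-function estimate \eqref{charfunctionestimate} at arguments $(t/2,t/2)$ and $(t/2,-t/2)$). Your observation that a direct projection of the 2D Kolmogorov bound would degrade the rate, and your proposed remedy via the characteristic functions of the sum and difference, is exactly the mechanism the authors use; the bookkeeping with $(g_1,g_2)=(g,0)$ and the resulting constants $c_1\pm c_2$ is also correct.
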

	
	Theorem \ref{cotangenttheorem} is a special case of Corollary \ref{maincorollary}. Indeed, $\pi \cot (\pi x) = 1/\langle x \rangle + g(x)$ with a $1$-periodic function $g$ of bounded variation on $[0,1]$ such that $\int_0^1 g(x) \, \mathrm{d}x =0$. Similarly, $\pi |\cot (\pi x)| =1/\| x \| + g(x)$ with a $1$-periodic function $g$ of bounded variation on $[0,1]$ such that $\int_0^1 g(x) \, \mathrm{d}x = 2 \log (2/\pi)$.
	
	Corollary \ref{maincorollary} with $g=0$ yields limit laws for the Diophantine sums
	\[ \sum_{n=1}^N \frac{1}{n \langle n \alpha \rangle} \qquad \textrm{and} \qquad \sum_{n=1}^N \frac{1}{n \| n \alpha \|}. \]
	As a further example, note that Theorem \ref{maintheorem} applies to the functions $f_1(x)=1/\{ x \}$ and $f_2(x)=1/(1-\{ x \})$, where $\{ \cdot \}$ denotes the fractional part function, with some $g_1$, $g_2$ such that $\int_0^1 g_1 (x) \, \mathrm{d}x = \int_0^1 g_2(x) \, \mathrm{d}x = \log 2$, yielding a joint limit law for the Diophantine sums
	\[ \left( \sum_{n=1}^N \frac{1}{n \{ n \alpha \}}, \sum_{n=1}^N \frac{1}{n (1-\{ n \alpha \})} \right) . \]
	The almost sure asymptotics of these sums was found in \cite{BO2,ER}, see also \cite{KR}. Higher-dimensional analogues were studied in \cite{FR1,FR2,LV} using Fourier analytic and dynamical methods, but no limit law is known in this case. Diophantine sums of this form play an important role in multiplicative Diophantine approximation \cite{BHV}.
	
	Diophantine counting problems with shrinking targets, such as
	\[ \left| \left\{ 1 \le n \le N \, : \, \| n \alpha \| < \delta_n \right \} \right| \]
	are closely related, and satisfy the central limit theorem as well as various other probabilistic limit theorems under suitable assumptions on the sequence $\delta_n \to 0$, see Philipp \cite{PH}, Samur \cite{SA2}, Fuchs \cite{FU1,FU2,FU3} and references therein. Central limit theorems for higher dimensional Diophantine counting problems were proved by Dolgopyat, Fayad and Vinogradov \cite{DFV} and Bj\"orklund and Gorodnik \cite{BG1,BG2,BG3} using dynamical methods.
	
	\subsection{Higher order singularities}
	
	We also prove a limit law for functions with a higher order singularity at integers.
	\begin{thm}\label{p>1theorem} Let $f_1(x)=\frac{1}{\langle x \rangle^p} \mathds{1}_{\{ \langle x \rangle >0 \}} +g_1(x)$ and $f_2(x)=\frac{1}{|\langle x \rangle|^p} \mathds{1}_{\{ \langle x \rangle <0 \}} +g_2(x)$ with some real constant $p>1$ and some $1$-periodic functions $g_1$ and $g_2$ that satisfy $|g_1(x)|, |g_2(x)| \ll 1/\| x \|^{p'}$ with some real constant $0 \le p' <p$. Let $\alpha \sim \mu$ with some $\mu \ll \lambda$. Then
		\[ \left( \frac{\sum_{n=1}^N f_1(n \alpha) / n^p +c \log N}{\sigma (\log N)^p}, \frac{\sum_{n=1}^N f_2(n \alpha) / n^p +c \log N}{\sigma (\log N)^p} \right) \overset{d}{\to} \mathrm{Stab} (1/p,1) \otimes \mathrm{Stab}(1/p,1), \]
		where $c=\mathds{1}_{\{ 1<p<2 \}} c_p$ with some explicit constant $c_p$, and $\sigma= \zeta(2p) \left( \frac{6}{\pi^2} \cos \left( \frac{\pi}{2p} \right) \Gamma \left( 1-\frac{1}{p} \right) \right)^p$. Assuming \eqref{lipschitz}, the rate of convergence in the Kolmogorov metric is
		\[ \ll \left\{ \begin{array}{ll} (\log N)^{-\frac{p}{2(p+1)}}(\log \log N)^{\frac{p}{2(p+1)}} & \textrm{if } 1<p \le 1+\sqrt{3}, \\ (\log N)^{-1/p} & \textrm{if } p> 1+\sqrt{3} \end{array} \right. \]
		with an implied constant depending only on $L$, $A$, $g_1$, $g_2$ and $p$.
	\end{thm}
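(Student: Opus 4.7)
The proof follows the same three-step pipeline as Theorem \ref{maintheorem}: localize $\sum_{n\le N} f_j(n\alpha)/n^p$ via the continued fraction of $\alpha$, reduce to a heavy-tailed sum over partial quotients, and apply a quantitative stable limit theorem for $\psi$-mixing sequences. The change from $p=1$ to $p>1$ concentrates the mass into a few extreme terms, so the limit becomes a stable law of index $1/p<1$ rather than of index $1$, and the joint product structure is produced by the sign alternation of $\langle q_k\alpha\rangle$.

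The first step is localization. For a convergent denominator $q_k$ of $\alpha$ one has $\|q_k\alpha\| \asymp 1/(a_{k+1}q_k)$ with sign $(-1)^k$, and for the "satellites" $n=\ell q_k$ with $1\le \ell \lesssim a_{k+1}$ the distances are approximately $\ell\|q_k\alpha\|$. Summing over $\ell$ gives a contribution to $\sum_{n\le N} f_j(n\alpha)/n^p$ of size $\asymp \zeta(2p)\,a_{k+1}^p$ per convergent, which is the origin of the prefactor $\sigma=\zeta(2p)\bigl(\tfrac{6}{\pi^2}\cos(\pi/(2p))\Gamma(1-1/p)\bigr)^p$. Using an Ostrowski expansion together with the generalization of Schmidt's method developed for Theorem \ref{maintheorem}, I would show that, modulo an error that vanishes after normalization, the full sum equals $\sigma \sum_{k\le K(N,\alpha)} a_{k+1}^p - c_p \log N \cdot \mathds{1}_{\{1<p<2\}}$, where $K(N,\alpha)\sim (12\log 2/\pi^2)\log N$ by L\'evy's theorem. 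The perturbations $g_j(n\alpha)/n^p$ are absorbed into the error because $p'<p$, and the sign alternation $(-1)^k$ separates the sum into an even-$k$ contribution (corresponding to the $1/\langle x\rangle^p\mathds{1}_{\{\langle x\rangle>0\}}$ part) and an odd-$k$ contribution (corresponding to the $1/|\langle x\rangle|^p\mathds{1}_{\{\langle x\rangle<0\}}$ part).

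Next I would identify the distributional limit of $\sum_{k\le K} a_{k+1}^p$. The assumption $\mu\ll\lambda$ together with Gauss--Kuzmin allows us to treat the digits $a_k$ as asymptotically Gauss-distributed with tail $\Pr(a_k^p>x)\sim (1/\log 2)\, x^{-1/p}$, putting $a_k^p$ in the domain of attraction of $\mathrm{Stab}(1/p,1)$. The exponential $\psi$-mixing of the Gauss map then lets us apply the stable CLT to obtain $(\sigma K^p)^{-1}\bigl(\sum_{k\le K} a_{k+1}^p - \text{centering}\bigr)\to \mathrm{Stab}(1/p,1)$; the explicit $c_p\log N$-centering is nonzero only for $1<p<2$, where the truncated-mean correction arising in the blocking step is of order $\log N$ (for $p\ge 2$ it is of smaller order and is absorbed into the error). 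Joint convergence of the even- and odd-indexed sums to \emph{independent} copies of $\mathrm{Stab}(1/p,1)$ follows from the same $\psi$-mixing, yielding the product law $\mathrm{Stab}(1/p,1)\otimes\mathrm{Stab}(1/p,1)$.

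The main obstacle is the rate of convergence. I would combine a Berry--Esseen-type estimate for stable convergence of $\psi$-mixing sums with a quantitative Gauss--Kuzmin bound and the truncation/blocking error from replacing $\sum_n f_j(n\alpha)/n^p$ by $\sigma \sum_k a_{k+1}^p$. The intrinsic stable CLT error is $K^{-1/p}\asymp (\log N)^{-1/p}$, while the blocking error is governed by a cutoff parameter that must be balanced against the Gauss--Kuzmin mixing rate. Optimization yields the two regimes of the statement: for $1<p\le 1+\sqrt{3}$ the blocking error dominates and produces $(\log N)^{-p/(2(p+1))}(\log\log N)^{p/(2(p+1))}$, whereas for $p>1+\sqrt{3}$ the intrinsic stable rate $(\log N)^{-1/p}$ wins.
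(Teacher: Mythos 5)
Your high-level pipeline — localize via the continued fraction, reduce to a heavy-tailed sum over partial quotients, apply a quantitative stable limit theorem for $\psi$-mixing sequences — is indeed the same as the paper's, and your description of the satellite contribution $\approx \zeta(2p)\,a_{k+1}^p$ per convergent is correct. However, there are two substantial gaps in your account.

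First, your attribution of the two rate regimes is wrong, and this matters because it is precisely where the crossover at $p=1+\sqrt 3$ comes from. The ``intrinsic stable CLT rate'' from Heinrich's method (Lemma \ref{sumakplimitlawlemma}) is in fact $\varepsilon_K \log K$ with $\varepsilon_K=K^{-1}$ for $1<p\le 2$ and $\varepsilon_K=K^{-1/(p-1)}$ for $p>2$ — always faster than $K^{-1/p}$, because the tail $\mu_{\mathrm{Gauss}}(a_1^p>y)$ has a regularly varying part plus a correction of relative order $y^{-1/p}$, and the blocking error with exponential $\psi$-mixing is negligible. The stable CLT is never the bottleneck. Instead, both rate terms are ``conversion'' errors: $(\log N)^{-p/(2(p+1))}(\log\log N)^{p/(2(p+1))}$ comes from the $O((\log N\log\log N)^{1/2})$ boundary terms between the random index $K_N^*$ and the deterministic $K_N$ (balanced via Lemma \ref{sumakptailslemma}), while $(\log N)^{-1/p}$ comes from the $O(\sum_{k}a_k^{p-1})$ error term in the explicit Davenport-type formula \eqref{formulaakp}, whose tail is also controlled by Lemma \ref{sumakptailslemma}. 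Optimizing your stated pair of errors would not produce the $1+\sqrt 3$ threshold.

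Second, the proof genuinely splits at $p=3/2$ and you need to be aware of why. For $1<p<3/2$ the error $O(\sum_k a_k^{p-1})$ in \eqref{formulaakp} is too crude (normalized, it is $\asymp(\log N)^{1-p}$, which for $p$ near $1$ is larger than the target rate), so the paper reruns the entire Theorem \ref{maintheorem} machinery with $R$ replaced by $R_p(x)=\sum_{1\le j<(x/2)^{1/2}}x^p/j^{2p}$: the Schmidt argument of Lemma \ref{awaylemma} contributes $\frac{2^{p-1}}{p-1}\log N$ from the terms with $|\langle n\alpha\rangle|\ge 1/(2n)$, Lemma \ref{Ruklemma} localizes the close-to-singularity terms as $\sum_k R_p(u_k)$, and Lemma \ref{conditionallemma} (which needs $a_k^{p-1}$ to have a finite second moment, hence $p<3/2$) drives the $u_k\to a_k$ replacement with constant $\kappa_p$. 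The centering $c_p$ is therefore a combination of three quantities — the away-from-singularity contribution, $\kappa_p$, and the drift $\kappa_p'$ from the characteristic function expansion — not just the ``truncated-mean correction in the blocking step''. For $p\ge 3/2$ the paper sidesteps all of this via \eqref{formulaakp}. Your write-up treats all $p>1$ uniformly and would not close either regime as stated.
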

	The centering term $c \log N$ is not necessary for the limit law itself, only for the rate. In fact, it is only visible in the range $1<p<(1+\sqrt{17})/4 =1.28\ldots$ (when $1-p>-\frac{p}{2(p+1)}$). The explicit formula for the constant $c_p$, $1<p<2$ is
	\begin{equation}\label{cp}
		c_p = \frac{6 \zeta (2p)}{\pi^2} \left( \frac{1}{p+1} + \frac{p-1}{2(p+1)} \log \frac{4}{3} +B_p \right),
	\end{equation}
	where
	\[ \begin{split} B_p &= \int_1^{\infty} \frac{x^p - \lfloor x \rfloor^p + x^{p-1}}{x^2+x} \, \mathrm{d}x - \frac{1}{p+1} \int_1^{\infty} \frac{x^{p-2} (6x+4)}{(x+1)(x+2)} \, \mathrm{d}x \\ &\phantom{={}}+ \int_1^{\infty} \left( \{ x \} -\frac{1}{2} \right) x^{p-1} \left( p \log \left( 1+\frac{1}{x(x+2)} \right) - \frac{2}{(x+1)(x+2)} \right) \, \mathrm{d}x . \end{split} \]
	The limit law of the difference resp.\ sum of the random vector in Theorem \ref{p>1theorem} corresponds to the following.
	\begin{cor}\label{p>1corollary} Let $p>1$ be a real constant, and let $g$ be a $1$-periodic function that satisfies $|g(x)| \ll 1/\| x \|^{p'}$ with some real constant $0 \le p' <p$. Let $\alpha \sim \mu$ with some $\mu \ll \lambda$. Then
		\[ \frac{1}{2^p \sigma (\log N)^p} \sum_{n=1}^N \frac{\frac{\mathrm{sgn} (\langle n \alpha \rangle)}{|\langle n \alpha \rangle|^p}+g(n \alpha)}{n^p} \overset{d}{\to} \mathrm{Stab}(1/p,0) \]
		and
		\[ \frac{1}{2^p \sigma (\log N)^p} \left( \sum_{n=1}^N \frac{\frac{1}{\| n \alpha \|^p}+g(n \alpha)}{n^p} +2c \log N \right) \overset{d}{\to} \mathrm{Stab}(1/p,1) , \]
		where $c$ and $\sigma$ are as in Theorem \ref{p>1theorem}. Assuming \eqref{lipschitz}, in both relations the rate of convergence in the Kolmogorov metric is the same as in Theorem \ref{p>1theorem} with an implied constant depending only on $L$, $A$, $g$ and $p$.
	\end{cor}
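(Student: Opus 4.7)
The corollary will follow from Theorem \ref{p>1theorem} by choosing the auxiliary functions $g_1, g_2$ appropriately and taking linear combinations of the coordinates of the resulting 2D random vector. For the first (signed) relation I would set $g_1 = g/2$ and $g_2 = -g/2$, and for the second (unsigned) relation $g_1 = g_2 = g/2$. In both cases the decay $|g_j(x)| \ll 1/\|x\|^{p'}$ is inherited from $g$, so Theorem \ref{p>1theorem} applies. A straightforward case check (according to the sign of $\langle x \rangle$) gives the algebraic identities
\[
f_1(x) - f_2(x) = \frac{\mathrm{sgn}(\langle x \rangle)}{|\langle x \rangle|^p} + g(x), \qquad f_1(x) + f_2(x) = \frac{1}{\|x\|^p} + g(x),
\]
so that the difference resp.\ sum of the two coordinates in Theorem \ref{p>1theorem} reproduces, up to scaling, the sums appearing in the corollary.

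Next, I would identify the distribution of $X \pm Y$ when $(X,Y) \sim \mathrm{Stab}(1/p,1) \otimes \mathrm{Stab}(1/p,1)$. Since $p>1$, the stability index $1/p \ne 1$, and the characteristic function is $\varphi(t) = \exp(-|t|^{1/p}(1 - i \tan(\pi/(2p))\, \mathrm{sgn}(t)))$. The two computations
\[
\varphi(t)\varphi(-t) = \exp(-2|t|^{1/p}), \qquad \varphi(t)^2 = \exp\bigl(-2|t|^{1/p}(1 - i \tan(\pi/(2p))\, \mathrm{sgn}(t))\bigr),
\]
combined with the scaling property of stable laws, yield $(X-Y)/2^p \sim \mathrm{Stab}(1/p,0)$ and $(X+Y)/2^p \sim \mathrm{Stab}(1/p,1)$. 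Feeding this back into the joint convergence of Theorem \ref{p>1theorem} and dividing by $2^p$ gives both limit laws at once; the centering $c \log N$ cancels in the signed case and doubles to $2c \log N$ in the unsigned case, exactly as stated.

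For the rate in the one-dimensional Kolmogorov metric, the natural approach is to transfer the 2D rate of Theorem \ref{p>1theorem} to the half-plane event $\{x \pm y \le t\}$ by approximating it from above and below by countable unions of axis-aligned rectangles, using the bounded smooth density of the non-degenerate stable limit to control the approximation error and the heavy power-law tails of order $t^{-1/p}$ to truncate the contribution from large $|y|$. The main obstacle is that a naive optimization of the mesh size and the truncation level loses roughly a square root of the 2D rate, which would not match the rate claimed in the corollary. To recover the full rate, I expect one must revisit the proof of Theorem \ref{p>1theorem} and establish a quantitative estimate for the characteristic function of every fixed linear combination $aX_N + bY_N$ with no worse error than that for the marginals, and then obtain the 1D Kolmogorov rate by a single application of Esseen's smoothing inequality; this bypasses the lossy rectangle approximation of the half-plane.
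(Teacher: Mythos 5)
Your proposal is correct and matches the paper's approach. The algebraic reduction (splitting $g$ into $g_1,g_2$, taking the difference and sum of the two coordinates, and identifying $(X\pm Y)/2^p$ via characteristic functions) is exactly right, and your final diagnosis of the rate issue is precisely what the paper does: rather than transfer the two-dimensional Kolmogorov rate through a half-plane approximation (which would be lossy), Lemma~\ref{sumakplimitlawlemma} directly establishes the one-dimensional rate $\ll\varepsilon_K$ for $(X_K\pm Y_K)/2^p$ by applying the one-dimensional Esseen inequality to the characteristic function $\phi(t/2,\pm t/2)$, whose error is already controlled uniformly in the course of the proof; Corollary~\ref{p>1corollary} then follows from those one-dimensional limit laws in the same way that Theorem~\ref{p>1theorem} follows from the two-dimensional one.
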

	
	Limit laws for cotangent sums such as relations \eqref{cotangentsquare} and \eqref{cotangentcube} immediately follow as a special case.
	
	The rest of the paper is organized as follows. We recall the mixing properties of continued fractions related to the Gauss--Kuzmin problem in Section \ref{mixingsection}. The proof of Theorem \ref{maintheorem} and Corollary \ref{maincorollary} is given in Section \ref{proofmaintheoremsection}, with an overview of the proof strategy in Section \ref{overviewsection}. The proof of Theorem \ref{p>1theorem} and Corollary \ref{p>1corollary} is given in Section \ref{p>1section}.
	
	\section{Mixing properties of continued fractions}\label{mixingsection}
	
	We refer to Khinchin \cite{KH} for a general introduction to Diophantine approximation and continued fractions, to Iosifescu and Kraaikamp \cite{IK} for a comprehensive account of the stochastic properties of continued fractions, and to Bradley \cite{BR} for a survey of various mixing coefficients and their applications in probability theory.
	
	We work on the probability space $([0,1], \mathcal{B}, \mu)$, where $\mathcal{B}$ is the family of Borel sets and $\mu \ll \lambda$, and write $\mathbb{E}_{\mu}$, $\mathrm{Var}_{\mu}$ and $\mathrm{Cov}_{\mu}$ for the expected value, the variance and the covariance. The continued fraction expansion of $\alpha \in [0,1]$ is written as $\alpha = [0;a_1,a_2,\ldots]$ with convergents $p_k/q_k=[0;a_1,a_2,\ldots, a_k]$. The partial quotients $a_k$ are thus random variables, and we use the common shorthand notation, say, $\mu (a_k=3) = \mu (\{ \alpha \in [0,1] \, : \, a_k=3 \})$ for the probability of the event $a_k=3$. The most important special case is the Gauss measure $\mu_{\mathrm{Gauss}}(B)=\frac{1}{\log 2} \int_B \frac{1}{1+x} \, \mathrm{d}x$, $B \in \mathcal{B}$.
	
	Let $\mathcal{A}_{\ell}^k$ be the $\sigma$-algebra generated by $a_j$, $\ell \le j \le k$, and similarly let $\mathcal{A}_{\ell}^{\infty}$ be the $\sigma$-algebra generated by $a_j$, $j \ge \ell$. A classical form of the Gauss--Kuzmin theorem \cite[p.\ 97]{IK} states that under the assumption \eqref{lipschitz},
	\begin{equation}\label{gausskuzmin}
		\left| \mu (B) - \mu_{\mathrm{Gauss}}(B)  \right| \ll e^{-a\ell} \mu_{\mathrm{Gauss}}(B) \quad \textrm{uniformly in } B \in \mathcal{A}_{\ell}^{\infty}
	\end{equation}
	with a small constant $a>0$ and an implied constant depending only on $L$. Under the assumption \eqref{lipschitz}, the $\psi$-mixing coefficients of the sequence of partial quotients $a_1, a_2, \ldots$, defined as
	\[ \psi (\ell) = \sup_{k \ge 1} \sup_{\substack{B \in \mathcal{A}_1^k, \,\, \mu (B)>0 \\ C \in \mathcal{A}_{k+\ell}^{\infty}, \,\, \mu (C)>0}} \left| \frac{\mu (B \cap C)}{\mu (B) \mu (C)} -1 \right|, \qquad \ell \ge 1, \]
	satisfy $\psi (\ell) \ll e^{-a \ell}$, where the constant $a>0$ and the implied constant depend only on $L$ and $A$. In the special case $\alpha \sim \mu_{\mathrm{Gauss}}$ the sequence $a_1, a_2, \ldots$ is strictly stationary with distribution
	\[ \mu_{\mathrm{Gauss}} (a_k = n) = \frac{1}{\log 2} \log \left( 1+\frac{1}{n (n+2)} \right), \qquad k,n \ge 1. \]
	
	The mixing properties of the partial quotients lead to various probabilistic limit theorems for the sum $\sum_{k=1}^K f(a_k)$ depending on the growth rate of the function $f$. In particular, for any $\mu \ll \lambda$,
	\begin{equation}\label{sumaklimitlaw}
		\frac{\sum_{k=1}^K a_k - A_K}{\frac{\pi}{2 \log 2} K} \overset{d}{\to} \mathrm{Stab} (1,1),
	\end{equation}
	where $A_K = \frac{1}{\log 2} K \log K + \frac{1}{\log 2} \left( \log \frac{\pi}{2 \log 2} - \gamma \right) K$. Heinrich \cite{HE} proved that in the special case $\alpha \sim \mu_{\mathrm{Gauss}}$ the rate of convergence in the Kolmogorov metric is $\ll (\log K)^2 /K$. See Lemma \ref{sumakplimitlawlemma} in Section \ref{p>1section} for a similar limit law with rate for power sums $\sum_{k=1}^K a_k^p$, $p>1$.
	
	\section{Proof of Theorem \ref{maintheorem}}\label{proofmaintheoremsection}
	
	\subsection{Overview}\label{overviewsection}
	
	The proof of Theorem \ref{maintheorem} consists of three main steps. We are interested in the Diophantine sums
	\[ \sum_{\substack{n=1 \\ \langle n \alpha \rangle >0}}^N \frac{1}{n \langle n \alpha \rangle} \qquad \textrm{and} \qquad \sum_{\substack{n=1 \\ \langle n \alpha \rangle <0}}^N \frac{1}{n |\langle n \alpha \rangle|} . \]
	We treat the terms such that $\langle n \alpha \rangle \ge 1/(2n)$ resp.\ $\langle n \alpha \rangle \le -1/(2n)$, and the terms such that $0<\langle n \alpha \rangle < 1/(2n)$ resp.\ $-1/(2n) < \langle n \alpha \rangle<0$ separately. The threshold is chosen as $1/(2n)$ so that in the latter case $n$ is an integral multiple of some convergent denominator $q_k$ by the theorem of Legendre \cite[p.\ 30]{KH}.
	
	By following the approach of Schmidt \cite{HA,SCH} with certain modifications, in Section \ref{awaysection} we show that the sum of all terms that are away from the singularity concentrates around $\frac{1}{2} (\log N)^2$:
	\[ \begin{split} \sum_{\substack{n=1 \\ \langle n \alpha \rangle \ge 1/(2n)}}^N \frac{1}{n \langle n \alpha \rangle} &= \frac{1}{2} (\log N)^2 + o(\log N) \quad \textrm{in measure} , \\ \sum_{\substack{n=1 \\ \langle n \alpha \rangle \le - 1/(2n)}}^N \frac{1}{n |\langle n \alpha \rangle|} &= \frac{1}{2} (\log N)^2 + o(\log N) \quad \textrm{in measure} . \end{split} \]
	In Section \ref{closesection} we estimate the sum of the terms that are close to the singularity. Very roughly, we will show that
	\[ \sum_{\substack{n=1 \\ 0<\langle n \alpha \rangle < 1/(2n)}}^N \frac{1}{n \langle n \alpha \rangle} \approx \frac{\pi^2}{6} \sum_{\substack{1 \le k \le \frac{12 \log 2}{\pi^2} \log N \\ k \textrm{ odd}}} a_k, \qquad \sum_{\substack{n=1 \\ -1/(2n) < \langle n \alpha \rangle <0}}^N \frac{1}{n |\langle n \alpha \rangle|} \approx \frac{\pi^2}{6} \sum_{\substack{1 \le k \le \frac{12 \log 2}{\pi^2} \log N \\ k \textrm{ even}}} a_k . \]
	The appearance of the sum of partial quotients explains the limit distribution $\mathrm{Stab}(1,1)$ in Theorem \ref{maintheorem} and Corollary \ref{maincorollary}. As we will see, the sum of partial quotients with odd resp.\ even indices are asymptotically independent, which leads to a product measure as limit distribution in Theorem \ref{maintheorem}.
	
	Finally, in Section \ref{completesection} we find the contribution of a $1$-periodic function of bounded variation $g$ by showing that
	\[ \sum_{n=1}^N \frac{g(n \alpha)}{n} = \log N \int_0^1 g(x) \, \mathrm{d}x + o(\log N) \quad \textrm{in measure} . \]
	Throughout the proof, we give quantitative error terms under the assumption \eqref{lipschitz}.
	
	\subsection{Reduction to positive Lipschitz densities}
	
	The following lemma shows that it will be enough to prove all our results under the assumption \eqref{lipschitz}.
	\begin{lem}\label{lipschitzlemma} Let $X_N: [0,1] \to \mathbb{R}^n$ be a sequence of Borel measurable functions, and let $\nu$ be a Borel probability measure on $\mathbb{R}^n$. Assume that $X_N(\alpha) \overset{d}{\to} \nu$ whenever $\alpha \sim \mu$ with some $\mu \ll \lambda$ that satisfies \eqref{lipschitz}. Then $X_N(\alpha) \overset{d}{\to} \nu$ whenever $\alpha \sim \mu$ with some $\mu \ll \lambda$.
	\end{lem}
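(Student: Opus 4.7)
The plan is a soft approximation argument: approximate an arbitrary density by positive Lipschitz ones in the $L^1([0,1])$ norm, and then transfer convergence in distribution via a total variation estimate that is uniform in $N$. Lemma \ref{lipschitzlemma} is essentially the assertion that imposing \eqref{lipschitz} is free of cost.

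First, I would establish that the positive Lipschitz densities on $[0,1]$, i.e.\ those satisfying \eqref{lipschitz} with some constants $L\ge 0$ and $A>0$, are $L^1([0,1])$-dense in the set of all probability densities. Given an arbitrary density $f$, one first approximates $f\wedge M$ in $L^1$ by a continuous function (e.g.\ convolve a truncation with a smooth mollifier, extending periodically and renormalising), then approximates that continuous function uniformly by a piecewise linear, hence Lipschitz, density. Finally, replacing any non-negative Lipschitz density $g$ by $g_\delta := (g+\delta)/(1+\delta)$ yields a density satisfying \eqref{lipschitz} with $\|g-g_\delta\|_{L^1}\le 2\delta$. Composing these steps produces, for any $\mu\ll\lambda$ with density $f$, a sequence of measures $\mu_k$ with positive Lipschitz densities $f_k$ and $\|f-f_k\|_{L^1}\to 0$.

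Second, I would use the elementary identity that for $\alpha\sim\mu$, $\widetilde\alpha\sim\widetilde\mu$ absolutely continuous with densities $f,\widetilde f$, and any bounded Borel $h:\mathbb{R}^n\to\mathbb{R}$,
\[
\left| \mathbb{E} h(X_N(\alpha)) - \mathbb{E} h(X_N(\widetilde\alpha)) \right| = \left| \int_0^1 h(X_N(x)) (f(x)-\widetilde f(x))\, \mathrm{d}x \right| \le \|h\|_\infty \|f-\widetilde f\|_{L^1}.
\]
The decisive feature is that this bound is uniform in $N$, since $X_N$ is a deterministic function. Letting $\alpha_k\sim\mu_k$ with $\mu_k$ as above, the hypothesis yields $\mathbb{E} h(X_N(\alpha_k))\to \int h\,\mathrm{d}\nu$ as $N\to\infty$ for every bounded continuous $h$ and every fixed $k$, while the displayed inequality gives $\sup_N |\mathbb{E} h(X_N(\alpha))-\mathbb{E} h(X_N(\alpha_k))| \le \|h\|_\infty \|f-f_k\|_{L^1} \to 0$ as $k\to\infty$. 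Letting $N\to\infty$ first and then $k\to\infty$ in the triangle inequality produces $\mathbb{E} h(X_N(\alpha))\to\int h\,\mathrm{d}\nu$, which by the Portmanteau theorem is equivalent to $X_N(\alpha)\overset{d}{\to}\nu$.

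There is no genuine obstacle in this proof; it is a soft approximation lemma rather than a hard theorem. The only point worth emphasising is the uniformity in $N$ of the total variation bound, which is precisely what allows the order of the two limits to be interchanged. A minor technical nuisance is the Lipschitz density approximation, but this reduces entirely to standard mollification on $[0,1]$ together with the trivial lift $g\mapsto (g+\delta)/(1+\delta)$ to enforce positivity.
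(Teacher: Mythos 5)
Your proposal is correct and follows essentially the same route as the paper: approximate the density in $L^1$ by a positive Lipschitz (in the paper, smooth positive) density, observe that this gives a total variation bound uniform in $N$, and conclude by a triangle-inequality/Portmanteau argument. The only cosmetic difference is that you test against bounded continuous functions while the paper tests against $\nu$-continuity sets; both are standard equivalent formulations of convergence in distribution.
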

	
	\begin{proof} Let $\mu \ll \lambda$ be arbitrary with density function $h=\frac{\mathrm{d}\mu}{\mathrm{d}\lambda}$. For any $\varepsilon>0$ there exists a smooth, positive function $h_{\varepsilon}$ on $[0,1]$ such that $\int_0^1 h_{\varepsilon} (x) \, \mathrm{d}x=1$ and $\int_0^1 |h(x)-h_{\varepsilon}(x)| \, \mathrm{d} x < \varepsilon$. Let $\mu_{\varepsilon}$ be the Borel probability measure on $[0,1]$ with density $h_{\varepsilon}$. In particular, $|\mu (B) - \mu_{\varepsilon}(B)|<\varepsilon$ for any $B \in \mathcal{B}$.
		
		Let $S \subseteq \mathbb{R}^n$ be any Borel set such that $\nu (\partial S)=0$. Since $\mu_{\varepsilon}$ has a positive Lipschitz density, by assumption $\mu_{\varepsilon} (X_N \in S) \to \nu (S)$. As $\varepsilon >0$ was arbitrary, it follows that $\mu (X_N \in S) \to \nu (S)$. Hence $X_N(\alpha) \overset{d}{\to} \nu$ for $\alpha \sim \mu$, as claimed.
	\end{proof}
	
	For the rest of Section \ref{proofmaintheoremsection}, let $\alpha \sim \mu$ with some $\mu \ll \lambda$ that satisfies \eqref{lipschitz}. A small constant $a>0$ whose value changes from line to line, and all implied constants depend only on $L$ and $A$.
	
	\subsection{Away from the singularity}\label{awaysection}
	
	Let $f_n: \mathbb{R} \to \mathbb{R}$ be a sequence of functions that vanish on $(-\infty, 0)$, are nonnegative, nonincreasing, and integrable on $[0,\infty)$. This is a more general setup than the one used by Schmidt \cite{SCH} and Harman \cite{HA}, who considered indicators of intervals $f_n = \mathds{1}_{[0,\delta_n]}$ under suitable assumptions on the sequence $\delta_n \ge 0$. Let
	\[ I_n = \int_0^{\infty} f_n (x) \, \mathrm{d}x, \quad F(N)= \sum_{n=1}^N I_n, \quad \gamma_n (x) = \sum_{j=0}^{n-1} f_n (nx - j), \quad \gamma_{n,k}(x) = \sum_{\substack{j=0 \\ \mathrm{gcd}(j,n) \le k}}^{n-1} f_n (nx - j) . \]
	
	\begin{lem}\label{schmidtlemma} Let $1 \le k \le N$ be integers, and assume that there exists a constant $C>0$ such that
		\begin{equation}\label{Ind}
			\sum_{\substack{n=1 \\ d \mid n}}^N I_n \le \frac{C}{d} \sum_{n=1}^N I_n
		\end{equation}
		for all positive integers $d$. Then
		\[ \int_0^{\infty} \left( \sum_{n=1}^N \gamma_n (x) - \sum_{n=1}^N \gamma_{n,k} (x) \right) \, \mathrm{d}x \le C \frac{F(N)}{k}, \]
		and
		\[ \int_0^{\infty} \left( \sum_{n=1}^N \gamma_{n,k} (x) \right)^2 \mathrm{d}x - \left( \int_0^{\infty} \sum_{n=1}^N \gamma_{n,k}(x) \, \mathrm{d}x \right)^2 \le \sum_{n,m=1}^N A (k,n,m) \int_0^{\infty} f_n (nx) f_m (mx) \, \mathrm{d}x + 2C \frac{F(N)^2}{k} \]
		with some nonnegative integers $A(k,n,m)=A(k,m,n)$ that satisfy $\sum_{n=1}^{\ell} A(k,n,m) \le \ell d_k(m)$ for any positive integers $\ell$ and $m$, where $d_k(m) = |\{ 1 \le d \le k \, : \, d \mid m \}|$.
	\end{lem}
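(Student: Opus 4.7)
The plan follows the classical method of Schmidt \cite{SCH,HA}, adapted from indicator targets to general monotone $f_n$. For the first inequality, I would substitute $u = nx - j$ (valid since $f_n$ vanishes on $(-\infty, 0)$) to obtain $\int_0^\infty f_n(nx - j)\,\mathrm{d}x = I_n/n$, whence $\int_0^\infty (\gamma_n - \gamma_{n,k})(x)\,\mathrm{d}x = (I_n/n)\, |\{0 \le j \le n-1 : \gcd(j,n) > k\}|$. A union bound over divisors gives $|\{j : \gcd(j,n) > k\}| \le \sum_{d \mid n,\, d > k} n/d$. Summing over $n \le N$, swapping the order of summation, and applying hypothesis \eqref{Ind} yields $\sum_{d > k}(C/d^2)\,F(N) \le C F(N)/k$, as required.

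For the variance bound, I would expand the LHS as $\sum_{n,m=1}^N \bigl[\int_0^\infty \gamma_{n,k}\gamma_{m,k}\,\mathrm{d}x - I_n^{(k)} I_m^{(k)}\bigr]$ with $I_n^{(k)} = \int_0^\infty \gamma_{n,k}\,\mathrm{d}x$. For fixed $(n,m)$, set $d = \gcd(n,m)$, $n' = n/d$, $m' = m/d$, and split the sum over $(j,i)$ satisfying the gcd restrictions into \emph{aligned} pairs (those with $mj = ni$, necessarily of the form $(n'j'', m'j'')$ with $j'' \in \{0,\ldots,d-1\}$) and \emph{non-aligned} pairs. The shift $y = x - j''/d$ reduces each aligned integral to $\int_0^\infty f_n(ny) f_m(my)\,\mathrm{d}y$; defining $A(k,n,m)$ as the count of $j''$ satisfying both gcd restrictions then gives the aligned contribution $A(k,n,m) \int_0^\infty f_n(nx) f_m(mx)\,\mathrm{d}x$. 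The symmetry $A(k,n,m) = A(k,m,n)$ is manifest from the definition, and exchanging the $j''$-count with a count over divisors of $\gcd(n,m)$ of size $\le k$ yields $\sum_{n=1}^\ell A(k,n,m) \le \ell d_k(m)$.

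The non-aligned pairs contribute, after the substitution $y = x - j/n$, integrals $\int_0^\infty f_n(ny) f_m(my + \ell/n)\,\mathrm{d}y$ with $\ell = mj - ni \neq 0$. Aggregating these via the periodization $G(y) = \sum_{\ell' \in \mathbb{Z}} f_m(my + \ell'/n')$---which is $1/(mn')$-periodic with mean $n'I_m$ and, by monotonicity of $f_m$, has controlled deviation from its mean---one compares the non-aligned contribution against $I_n^{(k)} I_m^{(k)}$. The resulting residual error, summed over $(n,m)$, is bounded by $2CF(N)^2/k$ via a divisor-sum argument invoking hypothesis \eqref{Ind} analogously to Part 1; the factor $2$ can be traced to $\bigl(\sum_n I_n\bigr)^2 - \bigl(\sum_n I_n^{(k)}\bigr)^2 \le 2F(N)\sum_n(I_n - I_n^{(k)}) \le 2CF(N)^2/k$.

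The main obstacle is the non-aligned analysis: one must reconcile the gcd restrictions on $(j,i)$---which exclude certain pairs---with the boundary deviation of $G$ from its mean, which can be large when $f_m$ has an unbounded peak at the origin (as in applications to $1/|x|^p$ singularities). The argument combines period-by-period monotonicity bounds on $f_n$ and $f_m$ with a divisor-sum manipulation mirroring Part 1.
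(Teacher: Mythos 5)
Your proposal takes essentially the same route as the paper: Part~1 is identical, and for the variance bound you use the same decomposition into aligned and non-aligned pairs, the same description of the aligned pairs $(n't,m't)$, $0\le t<d$, hence the same $A(k,n,m)$ with its symmetry and divisor-count bound, and the same accounting of the factor $2$ via $\bigl(\sum_n I_n\bigr)^2-\bigl(\sum_n I_n^{(k)}\bigr)^2\le 2F(N)\sum_n (I_n-I_n^{(k)})\le 2CF(N)^2/k$. The one step you leave as a sketch and flag as ``the main obstacle'' --- handling the non-aligned pairs in the presence of the gcd restrictions and the unbounded peak of $f_m$ --- is in fact not an obstacle at all, and your own bookkeeping already presupposes the simpler resolution: since all $f_n\ge 0$, you may drop the gcd restrictions on the non-aligned pairs and bound their total contribution, for each fixed $(n,m)$, by $I_nI_m$ (not by $I_n^{(k)}I_m^{(k)}$). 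Concretely, the values $ni-mj$ for non-aligned pairs run over nonzero multiples of $\gcd(n,m)$ with bounded multiplicity, and the correlation function $y\mapsto\int_0^\infty f_n(nx)f_m(mx-y)\,\mathrm{d}x$ is nondecreasing on $(-\infty,0]$ and nonincreasing on $[0,\infty)$ by the monotonicity hypotheses, so the sum over these equally spaced nonzero points is dominated by the corresponding integral; this yields the per-pair bound $\int_0^\infty\gamma_{n,k}\gamma_{m,k}\,\mathrm{d}x\le I_nI_m+A(k,n,m)\int_0^\infty f_n(nx)f_m(mx)\,\mathrm{d}x$, which combined with the Part~1 estimate for $1-\varphi(k,n)/n$ gives the stated $2CF(N)^2/k$. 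This is exactly how the paper proceeds: it records the unimodality of the correlation function (the only point where the generalization from indicators to monotone $f_n$ matters) and then applies Schmidt's Lemma~4 verbatim, so no reconciliation between the gcd restrictions and the deviation of your periodization $G$ from its mean is needed.
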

	
	\begin{proof} Let $\varphi (k,n) = |\{ 0 \le j \le n-1 \, : \, \mathrm{gcd} (j,n) \le k \}|$. We have $\int_0^{\infty} f_n (nx -j) \, \mathrm{d}x = I_n/n$ for all $0 \le j \le n-1$, hence
		\[ \int_0^{\infty} \left( \gamma_n (x) - \gamma_{n,k} (x) \right) \, \mathrm{d}x = I_n \left( 1-\frac{\varphi (k,n)}{n} \right) . \]
		Summing over $1 \le n \le N$ and using the estimate
		\[ 1-\frac{\varphi (k,n)}{n} = \frac{1}{n} \sum_{\substack{d \mid n \\ d>k}} |\{ 0 \le j \le n-1 \, : \, \mathrm{gcd} (j,n)=d \}| \le \sum_{\substack{d \mid n \\ d>k}} \frac{1}{d}, \]
		assumption \eqref{Ind} leads to
		\[ \int_0^{\infty} \left( \sum_{n=1}^N \gamma_n (x) - \sum_{n=1}^N \gamma_{n,k} (x) \right) \, \mathrm{d}x \le \sum_{n=1}^N I_n \sum_{\substack{d \mid n \\ d>k}} \frac{1}{d} = \sum_{d=k+1}^N \frac{1}{d} \sum_{\substack{n=1 \\ d \mid n}}^N I_n \le \sum_{d=k+1}^N C \frac{F(N)}{d^2} \le C \frac{F(N)}{k}, \]
		as claimed.
		
		To prove the second claim of the lemma, observe that
		\[ \left( \int_0^{\infty} \sum_{n=1}^N \gamma_{n,k}(x) \, \mathrm{d}x \right)^2 = \sum_{n,m=1}^N I_n I_m \frac{\varphi (k,n)}{n} \cdot \frac{\varphi(k,m)}{m}, \]
		and
		\[ \int_0^{\infty} \left( \sum_{n=1}^N \gamma_{n,k} (x) \right)^2 \, \mathrm{d}x = \sum_{n,m=1}^N \int_0^{\infty} \gamma_{n,k} (x) \gamma_{m,k}(x) \, \mathrm{d}x . \]
		The monotonicity assumptions made on $f_n$ ensure that the function $y \mapsto \int_0^{\infty} f_n (nx) f_m (mx -y) \, \mathrm{d}x$ is nondecreasing on $(-\infty, 0]$ and nonincreasing on $[0,\infty)$. Following the steps in the proof of \cite[Lemma 4]{SCH} verbatim thus yields
		\[ \int_0^{\infty} \gamma_{n,k} (x) \gamma_{m,k}(x) \, \mathrm{d}x \le I_n I_m + A(k,n,m) \int_0^{\infty} f_n (nx) f_m (mx) \, \mathrm{d}x, \]
		where $A(k,n,m)=|\{ (i,j) \in [0,m) \times [0,n) \, : \, ni-mj=0, \,\, \mathrm{gcd}(i,m) \le k, \,\, \mathrm{gcd}(j,n) \le k \}|$. Schmidt \cite[Lemma 5]{SCH} proved that $\sum_{n=1}^m A(k,n,m) \le m d_k (m)$, but the same argument shows that more generally, $\sum_{n=1}^{\ell} A(k,n,m) \le \ell d_k (m)$ for all positive integers $\ell$. By the previous three formulas,
		\[ \begin{split} \int_0^{\infty} \left( \sum_{n=1}^N \gamma_{n,k} (x) \right)^2 \, \mathrm{d}x - \left( \int_0^{\infty} \sum_{n=1}^N \gamma_{n,k}(x) \, \mathrm{d}x \right)^2 &\le \sum_{n,m=1}^N A (k,n,m) \int_0^{\infty} f_n (nx) f_m (mx) \, \mathrm{d}x \\ &\phantom{\le{}}+\sum_{n,m=1}^N I_n I_m \left( 1 - \frac{\varphi(k,n)}{n} \cdot \frac{\varphi (k,m)}{m} \right) . \end{split} \]
		The proof of the first claim of the lemma shows that here
		\[ \sum_{n,m=1}^N I_n I_m \left( 1 - \frac{\varphi(k,n)}{n} \cdot \frac{\varphi (k,m)}{m} \right) \le \sum_{n,m=1}^N I_n I_m \left( 1 - \frac{\varphi (k,n)}{n} + 1-\frac{\varphi (k,m)}{m} \right) \le 2C \frac{F(N)^2}{k} , \]
		as claimed.
	\end{proof}
	
	\begin{lem}\label{awaylemma} We have
		\[ \sum_{\substack{n=1 \\ \langle n \alpha \rangle \ge 1/2n}}^N \frac{1}{n \langle n \alpha \rangle} = \frac{1}{2} (\log N)^2 + \xi_{1,N}(\alpha) \quad \textrm{and} \quad \sum_{\substack{n=1 \\ \langle n \alpha \rangle \le -1/2n}}^N \frac{1}{n |\langle n \alpha \rangle|} = \frac{1}{2} (\log N)^2 + \xi_{2,N}(\alpha) \]
		with some error terms $\xi_{j,N}(\alpha)$, $j=1,2$ that satisfy
		\[ \mu \left( \frac{|\xi_{j,N}(\alpha)|}{\log N} \ge \frac{(\log \log N)^{1/3}}{(\log N)^{1/3}} \right) \ll \frac{(\log \log N)^{1/3}}{(\log N)^{1/3}} . \]
	\end{lem}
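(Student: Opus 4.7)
The plan is to apply Schmidt's variance estimate (Lemma~\ref{schmidtlemma}) with
\[ f_n(y) = \frac{1}{ny}\,\mathds{1}_{[1/(2n),\,1/2]}(y), \]
which are nonnegative, nonincreasing, and integrable on $[0,\infty)$. Since $f_n$ is supported in $[0,1/2]$, for each $\alpha\in[0,1]$ exactly one summand in $\sum_{j=0}^{n-1}f_n(n\alpha-j)$ is nonzero---the one for which $n\alpha-j$ is the signed distance to the nearest integer from below---giving $\gamma_n(\alpha)=\mathds{1}_{\{\langle n\alpha\rangle\ge 1/(2n)\}}/(n\langle n\alpha\rangle)$. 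Thus $\sum_{n\le N}\gamma_n(\alpha)$ is precisely the first sum in the lemma, and the second sum is handled identically by replacing the support by $[1/2,\,1-1/(2n)]$. Direct computation gives $I_n=\log n/n$, so $F(N)=\tfrac12(\log N)^2+O(1)$, and hypothesis \eqref{Ind} holds with an absolute constant since $\sum_{d\mid n,\,n\le N}\log n/n\ll(\log N)^2/d$. As $\gamma_n$ is essentially $1/n$-periodic and $h=d\mu/d\lambda$ is Lipschitz, a Riemann-sum estimate gives $\mathbb{E}_\mu[\gamma_n]=I_n+O(LI_n/n)$, hence $\mathbb{E}_\mu\bigl[\sum_{n\le N}\gamma_n\bigr]=\tfrac12(\log N)^2+O(1)$.

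For the variance, a support calculation gives, for $n\le m$,
\[ J_{n,m}:=\int_0^\infty f_n(nx)f_m(mx)\,dx=\frac{2(n^2-m)}{n^2m^2}\mathds{1}_{\{m\le n^2\}}\le\frac{2}{m^2}. \]
Combined with $\sum_{n\le m}A(k,n,m)\le m\,d_k(m)$ and the standard divisor-sum estimate $\sum_{m\le N}d_k(m)/m\le\sum_{d\le k}d^{-1}\log(N/d)\ll\log N\log k$, symmetry in $n,m$ gives $\sum_{n,m\le N}A(k,n,m)J_{n,m}\ll\log N\log k$. Since $\mathrm{Var}_\mu X\le\|h\|_\infty\,\mathrm{Var}_\lambda X$ (by centering at $\mathbb{E}_\lambda X$ instead of $\mathbb{E}_\mu X$ and pulling out $\|h\|_\infty$), Lemma~\ref{schmidtlemma} yields
\[ \mathrm{Var}_\mu\Bigl[\sum_{n=1}^N\gamma_{n,k}\Bigr]\ll\log N\log k+\frac{(\log N)^4}{k}. \]

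The final step is to choose $k=\lfloor(\log N)^3\rfloor$, balancing both terms to give a variance bound of order $\log N\log\log N$. Chebyshev's inequality at threshold $s\asymp(\log N)^{2/3}(\log\log N)^{1/3}$ then produces a probability bound of order $(\log\log N/\log N)^{1/3}$, exactly matching the claimed rate. The truncation $\sum\gamma_n-\sum\gamma_{n,k}$ is nonnegative with $L^1(\mu)$-norm $\ll F(N)/k=O(1/\log N)$; by Markov's inequality it is $\le s$ with probability $1-O((\log N)^{-5/3})$, hence negligible at this scale. Combining the three error sources (truncation, concentration around the mean, and the $O(1)$ mean correction) yields the claimed bound for $\xi_{1,N}$, and $\xi_{2,N}$ follows from the symmetric construction. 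The main technical obstacle is the interplay between $J_{n,m}\ll\max(n,m)^{-2}$ and $\sum_{m\le N}d_k(m)/m\ll\log N\log k$: together they force the variance to grow only as $\log N\log\log N$, which is precisely what allows Chebyshev to deliver the $1/3$-power rate---any slower gain in $k$ would weaken the final exponent.
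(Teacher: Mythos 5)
Your overall strategy is the right one — Schmidt-type variance bound via Lemma~\ref{schmidtlemma}, threshold $k=\lfloor(\log N)^3\rfloor$, then Markov and Chebyshev — and your computations of $I_n=\log n/n$, $F(N)=\tfrac12(\log N)^2+O(1)$, $J_{n,m}\le 2/m^2$, and the resulting variance $\ll\log N\log\log N$ are all correct. The issue is right at the start: the function
\[ f_n(y) = \frac{1}{ny}\,\mathds{1}_{[1/(2n),\,1/2]}(y) \]
is \emph{not} nonincreasing on $[0,\infty)$. It is $0$ on $[0,1/(2n))$ and jumps up to the value $2$ at $y=1/(2n)$. Since ``nonincreasing on $[0,\infty)$'' is an explicit hypothesis of Lemma~\ref{schmidtlemma}, you cannot apply it to your $f_n$. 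That hypothesis is not cosmetic: the proof of the second estimate in Lemma~\ref{schmidtlemma} (inherited from Schmidt's Lemma 4) hinges on the cross-correlation $y\mapsto\int_0^\infty f_n(nx)f_m(mx-y)\,\mathrm{d}x$ being unimodal with its peak at $y=0$, which follows from the substitution $C(y)=\int_0^\infty f_n(nu+ny)f_m(mu)\,\mathrm{d}u$ and the monotonicity of $f_n$; for your $f_n$, $f_n(nu+ny)$ is not monotone in $y$, so this step fails and the bound $\int\gamma_{n,k}\gamma_{m,k}\le I_nI_m+A(k,n,m)\int f_n(nx)f_m(mx)\,\mathrm{d}x$ is no longer justified.

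The paper's proof avoids this by writing your $f_n$ as a difference of two genuinely nonincreasing functions: $f_n^{\text{paper}}(x)$ equal to $2$ on $[0,1/(2n))$ and $1/(nx)$ on $[1/(2n),1/2]$, minus $\tilde f_n(x)=2\mathds{1}_{[0,1/(2n))}(x)$. Both are nonincreasing on $[0,\infty)$, Lemma~\ref{schmidtlemma} applies to each, and the quantity of interest $S_N=\sum(\gamma_n-\tilde\gamma_n)$ inherits the $L^1$ and variance bounds by the triangle inequality and $\mathrm{Var}(X-Y)\le 2\mathrm{Var}(X)+2\mathrm{Var}(Y)$. If you insert this decomposition, the rest of your argument (including the treatment of the second sum, which the paper handles more cleanly via the measure-preserving reflection $\alpha\mapsto1-\alpha$ rather than by modifying the support) goes through essentially as you describe, with the same exponent $1/3$.
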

	
	\begin{proof} Assume first that $\alpha \sim \lambda$. Consider the sequences of functions
		\[ f_n (x) = \left\{ \begin{array}{ll} 2 & \text{if } 0 \le x < \frac{1}{2n}, \\ \frac{1}{nx} & \text{if } \frac{1}{2n} \le x \le \frac{1}{2}, \\ 0 & \text{else}, \end{array} \right. \qquad \textrm{and} \qquad \tilde{f}_n (x) = \left\{ \begin{array}{ll} 2 & \text{if } 0 \le x < \frac{1}{2n}, \\ 0 & \text{else}. \end{array} \right. \]
		Let $k \in [1,N]$ be an integer, to be chosen. Let $I_n$, $\gamma_n (x)$ and $\gamma_{n,k}(x)$ be as in Lemma \ref{schmidtlemma}, and let $\tilde{I}_n$, $\tilde{\gamma}_n (x)$ and $\tilde{\gamma}_{n,k}(x)$ be the same quantities in terms of $\tilde{f}_n$. Let
		\[ S_N(\alpha) = \sum_{n=1}^N \left( \gamma_n (\alpha) - \tilde{\gamma}_n(\alpha) \right) \quad \textrm{and} \quad S_{N,k} (\alpha) = \sum_{n=1}^N \left( \gamma_{n,k} (\alpha) - \tilde{\gamma}_{n,k} (\alpha) \right) . \]
		Observe that $f_n(x) - \tilde{f}_n(x) = \frac{1}{nx} \mathds{1}_{\{ 1/(2n) \le x \le 1/2 \}}$, thus
		\[ \sum_{\substack{n=1 \\ \langle n \alpha \rangle \ge 1/2n}}^N \frac{1}{n \langle n \alpha \rangle} = \sum_{n=1}^N \sum_{j=0}^{n-1} \frac{1}{n (n\alpha -j)} \mathds{1}_{\{ 1/(2n) \le n \alpha -j \le 1/2 \}} = S_N(\alpha) . \]
		
		The sequences $I_n = (1+\log n)/n$ and $\tilde{I}_n = 1/n$ are decreasing in $n$, hence condition \eqref{Ind} is satisfied with $C=1$. Lemma \ref{schmidtlemma} thus yields
		\[ \mathbb{E}_{\lambda} |S_N(\alpha) - S_{N,k} (\alpha)| \le \mathbb{E}_{\lambda} \left( \sum_{n=1}^N \gamma_n (\alpha) - \sum_{n=1}^N \gamma_{n,k}(\alpha) \right) + \mathbb{E}_{\lambda} \left( \sum_{n=1}^N \tilde{\gamma}_n (\alpha) - \sum_{n=1}^N \tilde{\gamma}_{n,k}(\alpha) \right) \ll \frac{(\log N)^2}{k}, \]
		and
		\[ \begin{split} \mathrm{Var}_{\lambda} S_{N,k}(\alpha) &\le \mathrm{Var}_{\lambda} \left( \sum_{n=1}^N \gamma_{n,k}(\alpha) \right) + \mathrm{Var}_{\lambda} \left( \sum_{n=1}^N \tilde{\gamma}_{n,k}(\alpha) \right) \\ &\ll \sum_{n,m=1}^N A(k,n,m) \left(  \int_0^{\infty} f_n (nx) f_m (mx) \, \mathrm{d}x +  \int_0^{\infty} \tilde{f}_n (nx) \tilde{f}_m (mx) \, \mathrm{d}x \right) + \frac{(\log N)^4}{k} . \end{split} \]
		For any $1 \le n \le m$, we have
		\[ \begin{split} \int_0^{\infty} f_n (nx) f_m (mx) \, \mathrm{d}x &= \left\{ \begin{array}{ll} \frac{4+4\log (m/n) - 2m/n^2}{m^2} & \textrm{if } m \le n^2, \\ \frac{2+2\log m}{m^2} & \textrm{if } m>n^2 \end{array} \right. \\ &\le \frac{4+4\log (m/n)}{m^2} \end{split} \]
		and $\int_0^{\infty} \tilde{f}_n (nx) \tilde{f}_m (mx) \, \mathrm{d}x = 2/m^2$. Summation by parts and the estimate $\sum_{n=1}^{\ell} A(k,n,m) \le \ell d_k (m)$ lead to
		\[ \begin{split} \sum_{n,m=1}^N A(k,n,m) &\left(  \int_0^{\infty} f_n (nx) f_m (mx) \, \mathrm{d}x +  \int_0^{\infty} \tilde{f}_n (nx) \tilde{f}_m (mx) \, \mathrm{d}x \right)  \\ &\ll \sum_{m=1}^N \sum_{n=1}^m A(k,n,m) \frac{1+\log (m/n)}{m^2} \\ &= \sum_{m=1}^N \frac{1}{m^2} \left( \sum_{\ell=1}^{m-1} \log \frac{\ell+1}{\ell} \sum_{n=1}^{\ell} A(k,n,m) + \sum_{n=1}^m A(k,n,m) \right) \\ &\ll \sum_{m=1}^N \frac{d_k (m)}{m} = \sum_{d=1}^k \sum_{\substack{m=1 \\ d \mid m}}^N \frac{1}{m} \ll \sum_{d=1}^k \frac{\log N}{d} \ll \log N \log k. \end{split} \]
		Choosing $k=\lfloor (\log N)^3 \rfloor$, the previous estimates show that $\mathbb{E}_{\lambda} |S_N(\alpha) - S_{N,k}(\alpha)| \ll 1$, and $\mathrm{Var}_{\lambda} S_{N,k}(\alpha) \ll \log N \log \log N$. In particular,
		\[ \mathbb{E}_{\lambda} S_{N,k}(\alpha) = \mathbb{E}_{\lambda} S_N (\alpha) + O(1) = \sum_{n=1}^N \frac{\log n}{n} + O(1) = \frac{1}{2} (\log N)^2 +O(1) . \]
		An application of the Markov and the Chebyshev inequalities yield
		\[ \begin{split} \lambda &\left( \frac{1}{\log N} \left| S_N(\alpha) - \frac{1}{2} (\log N)^2 \right| \ge \frac{(\log \log N)^{1/3}}{(\log N)^{1/3}} \right) \\ & \le \lambda \left( \frac{|S_N (\alpha) - S_{N,k}(\alpha)|}{\log N} \gg \frac{(\log \log N)^{1/3}}{(\log N)^{1/3}} \right) + \lambda \left( \frac{|S_{N,k} (\alpha) - \mathbb{E}_{\lambda} S_{N,k}(\alpha)|}{\log N} \gg \frac{(\log \log N)^{1/3}}{(\log N)^{1/3}} \right) \\ &\ll \frac{(\log \log N)^{1/3}}{(\log N)^{1/3}} . \end{split} \]
		Since $\alpha \mapsto 1-\alpha$ is a measure preserving map, we also have
		\[ \lambda \Bigg( \frac{1}{\log N} \bigg| \sum_{\substack{n=1 \\ \langle n \alpha \rangle \le -1/2n}}^N \frac{1}{n |\langle n \alpha \rangle|} - \frac{1}{2} (\log N)^2  \bigg| \ge \frac{(\log \log N)^{1/3}}{(\log N)^{1/3}} \Bigg) \ll \frac{(\log \log N)^{1/3}}{(\log N)^{1/3}} . \]
		This finishes the proof for $\alpha \sim \lambda$.
		
		The density $\mathrm{d}\mu / \mathrm{d}\lambda$ is bounded above by a constant depending only on $L$, hence the previous two tail estimates remain true for any $\mu$ that satisfies \eqref{lipschitz}.
	\end{proof}
	
	\subsection{Close to the singularity}\label{closesection}
	
	We start with a tail estimate for sums of partial quotients.
	\begin{lem}\label{sumaktailslemma} For any integers $M \ge 0$ and $K \ge 1$, and any real $t>2 \log K$,
		\[ \mu \left( \sum_{k=M+1}^{M+K} a_k \ge t K \right) \ll \frac{1}{t} . \]
	\end{lem}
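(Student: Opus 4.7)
The plan is to estimate the tail by a truncation argument combined with Chebyshev's inequality. Choosing the truncation level $T := tK$, I would decompose
\[ \mu\bigg(\sum_{k=M+1}^{M+K} a_k \ge tK\bigg) \le \mu\Big(\max_{M+1 \le k \le M+K} a_k > T\Big) + \mu\bigg(\sum_{k=M+1}^{M+K} a_k \mathds{1}_{\{a_k \le T\}} \ge tK\bigg). \]
The first term is easy: the Gauss distribution of $a_k$ recalled in Section \ref{mixingsection} gives $\mu_{\mathrm{Gauss}}(a_k > T) \ll 1/T$, which by \eqref{gausskuzmin} transfers to $\mu(a_k > T) \ll 1/T$ uniformly in $k$, and a union bound yields a contribution $\ll K/T = 1/t$.

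For the truncated sum $S := \sum_{k=M+1}^{M+K} X_k$ with $X_k := a_k \mathds{1}_{\{a_k \le T\}}$ I would estimate the mean and variance. The Gauss--Kuzmin bound gives $\mathbb{E}_\mu X_k \le (\log T)/\log 2 + O(1)$ uniformly in $k$, so $\mathbb{E}_\mu S \le K \log T / \log 2 + O(K)$. For the variance I would use the $\psi$-mixing: the standard covariance bound $|\mathrm{Cov}_\mu(X_j, X_k)| \le \psi(|j-k|) \mathbb{E}_\mu |X_j| \mathbb{E}_\mu |X_k|$ (valid for the nonnegative $X_k$), combined with $\mathbb{E}_\mu X_k^2 \ll T$ from the $1/n^2$ tail of the Gauss--Kuzmin distribution and the exponential decay $\psi(\ell) \ll e^{-a\ell}$, yields $\mathrm{Var}_\mu S \ll KT = tK^2$. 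Chebyshev then produces
\[ \mu(S \ge tK) \le \frac{\mathrm{Var}_\mu S}{(tK - \mathbb{E}_\mu S)^2} \ll \frac{tK^2}{(tK - \mathbb{E}_\mu S)^2}, \]
which will give $\ll 1/t$ as soon as $tK - \mathbb{E}_\mu S \gg tK$.

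Verifying this last inequality is where the hypothesis $t > 2 \log K$ enters and is the main delicate point. That hypothesis gives $\log T = \log(tK) \le \log t + t/2$, so
\[ tK - \mathbb{E}_\mu S \ge K\Big[t\Big(1 - \tfrac{1}{2 \log 2}\Big) - \tfrac{\log t}{\log 2} - O(1)\Big]. \]
Since $1 - 1/(2 \log 2) = 0.2786\ldots > 0$, this is $\ge c_0 tK$ for an absolute $c_0 > 0$ once $t$ exceeds a sufficiently large threshold $t_0$, and Chebyshev delivers the desired $\ll 1/t$ bound; the regime $t \le t_0$ is absorbed into the implied constant via the trivial bound $\mu \le 1$. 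The main obstacle is precisely that the coefficient $1/\log 2 \approx 1.44$ in the truncated mean exceeds $1$, so the hypothesis $t > 2 \log K$ is close to sharp: any weaker cushion would fail to leave the positive gap $1 - 1/(2\log 2)$ that drives the whole argument.
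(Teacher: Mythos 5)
Your argument is a direct reproof of the one-sided Diamond--Vaaler concentration bound via truncation at $T = tK$ plus Chebyshev, whereas the paper simply cites \cite[Eq.~(6)]{DV} (whose own proof is this exact truncation--Chebyshev scheme), shifts to $M\neq 0$ by stationarity under $\mu_{\mathrm{Gauss}}$, and uses $t > 2\log K$ only to observe $(K\log K)/\log 2 \le (3/4) tK$ so the one-sided tail event is contained in the two-sided one. So the two routes are different in execution but share the same engine; in fact what you have written is essentially the paper's own approach to the power-sum analogue, Lemma~\ref{sumakptailslemma}, specialized to $p=1$ (which that lemma excludes precisely because the mean then grows and the hypothesis $t > 2\log K$ is needed to control it, as you correctly identify).

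There is one gap, and it concerns the order in which you transfer from $\mu_{\mathrm{Gauss}}$ to $\mu$. You need the \emph{exact} constant $1/\log 2$ in $\mathbb{E}_\mu X_k \le (\log T)/\log 2 + O(1)$ because the whole argument turns on $1 - 1/(2\log 2) > 0$, but this constant is only justified for $\mu_{\mathrm{Gauss}}$ (where the $a_k$ are identically distributed). For a general $\mu$ satisfying \eqref{lipschitz}, the Gauss--Kuzmin theorem \eqref{gausskuzmin} gives $\mathbb{E}_\mu X_k = (1 + O(e^{-ak}))\,\mathbb{E}_{\mu_{\mathrm{Gauss}}} X_k$, which is sharp only once $k$ is large; for $k$ small, all you have is $\mathbb{E}_\mu X_k \le C(L,A)\,\mathbb{E}_{\mu_{\mathrm{Gauss}}} X_k$ where $C(L,A)$ can comfortably exceed the critical threshold $2\log 2 \approx 1.386$. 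So ``uniformly in $k$'' with the constant $1/\log 2$ is not quite right. It is fixable: only $O_{L,A}(1)$ indices $k$ are affected, so for $K$ large their contribution to $\mathbb{E}_\mu S$ is absorbed, and for $K$ bounded a union bound gives $\ll_{L,A} 1/t$ directly. But the cleanest repair --- and the one the paper uses --- is to run the entire truncation--Chebyshev computation under $\mu_{\mathrm{Gauss}}$ (where stationarity makes every estimate exact and uniform) and only at the very end transfer the resulting tail bound to $\mu$ via $\mu(E) \le C(L)\,\mu_{\mathrm{Gauss}}(E)$ for all measurable $E$, which costs only a multiplicative constant.
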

	
	\begin{proof} Diamond and Vaaler \cite[Eq.\ (6)]{DV} showed that for any $t>0$,
		\[ \mu_{\mathrm{Gauss}} \left( \left| \sum_{k=1}^K a_k - \frac{K \log K}{\log 2} \right| \ge t K \right) \ll \frac{1}{t} . \]
		Since the partial quotients are strictly stationary, and the assumption $t>2 \log K$ implies that $(K \log K) / \log 2 \le (3/4) tK$,
		\[ \mu_{\mathrm{Gauss}} \left( \sum_{k=M+1}^{M+K} a_k \ge t K \right) \le \mu_{\mathrm{Gauss}} \left( \left| \sum_{k=M+1}^{M+K} a_k - \frac{K \log K}{\log 2} \right| \ge \frac{t K}{4} \right) \ll \frac{1}{t}. \]
		The density $\mathrm{d}\mu / \mathrm{d}\mu_{\mathrm{Gauss}}$ is bounded above by a constant depending only on $L$, hence the previous tail estimate remains true for any $\mu$ that satisfies \eqref{lipschitz}.
	\end{proof}
	
	For any $N \ge 1$, let $K_N^*=K_N^*(\alpha)$ denote the random index for which $q_{K_N^*} \le N < q_{K_N^*+1}$, and let $K_N$ be a deterministic even integer such that $|K_N - \frac{12 \log 2}{\pi^2} \log N| \le 1$.
	\begin{lem}\label{KNlemma} There exists a universal constant $\tau>0$ such that
		\[ \mu \left( \left| K_N^* - K_N \right| \ge \tau (\log N \log \log N)^{1/2} \right) \ll \frac{1}{(\log N)^{1/2}} . \]
	\end{lem}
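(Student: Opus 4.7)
The plan is to relate the event $\{|K_N^* - K_N| \ge \Delta\}$, with $\Delta := \tau (\log N \log \log N)^{1/2}$, to a fluctuation of $\log q_K$ around its mean via the tautology $\{K_N^* \ge K\} = \{q_K \le N\} = \{\log q_K \le \log N\}$. The problem thus reduces to showing that $\log q_K$ concentrates tightly around $\frac{\pi^2}{12 \log 2} K$ for $K = K_N \pm \Delta$.

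To obtain such concentration I would first decompose $\log q_K = \sum_{j=1}^K \eta_j$ with $\eta_j := \log (q_j / q_{j-1})$. Using $q_j / q_{j-1} = a_j + q_{j-2}/q_{j-1} \in [a_j, a_j + 1)$, we have $\eta_j \in [\log a_j, \log (a_j + 1))$; in particular, $\eta_j$ is $\mathcal{A}_1^j$-measurable and $\eta_j = \log a_j + O(1/a_j)$. The fact that $\mu_{\mathrm{Gauss}} (\log a_j > t) \asymp e^{-t}$ gives $\eta_j$ an exponential moment, and by \eqref{lipschitz} the same holds under $\mu$, so in particular $\mathrm{Var}_{\mu} \eta_j = O(1)$ uniformly in $j$. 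Combined with $\psi(\ell) \ll e^{-a \ell}$ and Ibragimov's covariance bound $|\mathrm{Cov}_{\mu} (\eta_i, \eta_j)| \ll \psi(|i-j|) \mathrm{Var}_{\mu}(\eta_i)^{1/2} \mathrm{Var}_{\mu}(\eta_j)^{1/2}$, this yields $\mathrm{Var}_{\mu} (\log q_K) \ll K$.

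For the mean, I would exploit that the only dependence of $\eta_j$ on $a_1, \ldots, a_{j-L}$ enters through $q_{j-2}/q_{j-1} = [0; a_{j-1}, \ldots, a_1]$, which is exponentially close in $L$ to the truncated continued fraction $[0; a_{j-1}, \ldots, a_{j-L}]$. Thus $\eta_j$ is within $O(e^{-cL})$ of an $\mathcal{A}_{j-L+1}^j$-measurable variable, and \eqref{gausskuzmin} applied with $L = \lfloor j/2 \rfloor$ gives $\mathbb{E}_{\mu} \eta_j = \mathbb{E}_{\mu_{\mathrm{Gauss}}} \eta_j + O(e^{-aj})$. Summing and invoking L\'evy's classical identity $\sum_{j=1}^K \mathbb{E}_{\mu_{\mathrm{Gauss}}} \eta_j = \frac{\pi^2}{12 \log 2} K + O(1)$, together with the hypothesis $|K_N - \frac{12 \log 2}{\pi^2} \log N| \le 1$, we obtain $\mathbb{E}_{\mu} \log q_{K_N \pm \Delta} = \log N \pm \frac{\pi^2}{12 \log 2} \Delta + O(1)$, so the event $\{K_N^* \ge K_N + \Delta\}$ forces a deviation of $\log q_{K_N + \Delta}$ from its mean of order $\Delta$.

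The main obstacle is the concentration step itself: a plain Chebyshev bound based on $\mathrm{Var}_{\mu}(\log q_K) \ll K$ yields only $\mu (|\log q_K - \mathbb{E}_{\mu} \log q_K| \ge \Delta) \ll K/\Delta^2 \asymp 1/\log \log N$, which is weaker than the target rate $(\log N)^{-1/2}$. I would upgrade this to a sub-Gaussian tail by truncating at the threshold $a_j \le (\log N)^3$: by Lemma \ref{sumaktailslemma} the probability that any $a_j$, $1 \le j \le K$, exceeds this bound is $\ll K \cdot (\log N)^{-3} \ll (\log N)^{-2}$. Outside this negligible set, $\log q_K$ equals a sum of bounded ($O(\log \log N)$) $\psi$-mixing terms with uniformly controlled variance, so a Bernstein-type inequality for $\psi$-mixing sequences (cf.\ Bradley \cite{BR}) yields $\mu (|\log q_K - \mathbb{E}_{\mu} \log q_K| \ge t) \ll \exp(-c t^2/K)$ in the relevant range. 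With $t \asymp \Delta$ and $K \asymp \log N$ the exponent is $\asymp \tau^2 \log \log N$, producing a tail $\ll (\log N)^{-c \tau^2}$, which is $\ll (\log N)^{-1/2}$ for $\tau$ sufficiently large. A union bound over the events $\{K_N^* \ge K_N + \Delta\}$ and $\{K_N^* \le K_N - \Delta\}$ completes the argument.
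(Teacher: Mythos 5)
Your overall route --- reducing the lemma to concentration of $\log q_K$ at scale $(\log N \log \log N)^{1/2}$ for $K=K_N\pm\Delta$ and converting via $\{ K_N^* \ge K \} = \{ \log q_K \le \log N \}$ --- is sound, and your mean and variance computations are fine. The crux, however, is the sub-Gaussian bound $\mu ( |\log q_K - \mathbb{E}_{\mu} \log q_K| \ge t ) \ll \exp (-c t^2 /K)$, and as written this step is not justified. Bradley's survey \cite{BR} catalogues mixing coefficients and covariance inequalities; it does not supply a Bernstein-type concentration inequality, so you are leaning on a tool your reference does not contain. Moreover, the summands $\eta_j=\log (q_j/q_{j-1})$ are $\mathcal{A}_1^j$-measurable but not measurable with respect to any block $\mathcal{A}_{j-L}^{j}$, so the $\psi$-mixing of the partial quotients does not apply to the sequence $(\eta_j)$ directly: the finite-range approximation you invoke for the mean (and tacitly for the covariance bound) is equally indispensable in the concentration step. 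One would have to truncate, replace each $\eta_j$ by an $\mathcal{A}_{j-L}^{j}$-measurable approximant with error $O(e^{-cL})$, $L \asymp \log K$, and then run a blocking/exponential-moment argument with blocks of length $\asymp \log \log N$, exploiting the multiplicativity that $\psi$-mixing gives for expectations of products. This does work --- in the regime $t \asymp (\log N \log \log N)^{1/2}$, $K \asymp \log N$, terms bounded by $O(\log \log N)$, the Chernoff optimization stays inside the Bernstein range --- but it is the heart of the proof and needs to be carried out, not waved at.

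A second, smaller defect: you argue under $\mu$ throughout, where the $\psi$-mixing rate, and hence the constant in the exponent, depends on $L$ and $A$; consequently ``$\tau$ sufficiently large'' depends on $L$ and $A$, whereas the lemma asserts a universal $\tau$. The fix is the same device used elsewhere in the paper and in your own auxiliary steps: prove the tail bound under $\lambda$ (or $\mu_{\mathrm{Gauss}}$), where all constants are absolute, and transfer to $\mu$ at the end using that $\mathrm{d}\mu /\mathrm{d}\lambda$ is bounded by a constant depending only on $L$. For comparison, the paper's proof is much shorter: it quotes the CLT with Berry--Esseen rate for $\log q_n$ (Morita \cite{MO}, Vall\'ee \cite{VA}, \cite{IK}), which at deviations of size $10\tau_1 (n \log n)^{1/2}$ already gives a tail $\ll n^{-1/2}$ (the CLT error term being exactly the target rate), and converts $|K_N^*-K_N|$ into $|\log q_{K_N} - \log N|$ via the inequality $q_{k+2}/q_k \ge 2$. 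Your construction, once completed along the lines above, would in fact give a stronger tail of order $(\log N)^{-c\tau^2}$, but at the cost of reproving from scratch a concentration statement the literature already provides in sharper form; nothing downstream of the lemma needs more than $(\log N)^{-1/2}$.
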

	
	\begin{proof} In the special case $\alpha \sim \lambda$, the convergent denominators satisfy the central limit theorem with rate
		\[ \lambda \left( \frac{\log q_n - \frac{\pi^2}{12 \log 2} n}{\tau_1 n^{1/2}} \le x \right) = \int_{-\infty}^x \frac{e^{-y^2/2}}{(2 \pi)^{1/2}} \, \mathrm{d}y + O \left( \frac{1}{n^{1/2}} \right) \]
		with a universal constant $\tau_1>0$, see Morita \cite{MO}, Vall\'ee \cite{VA} and \cite[p.\ 194--195]{IK}. In particular,
		\[ \lambda \left( \left| \log q_n - \frac{\pi^2}{12 \log 2} n \right| \ge 10 \tau_1 (n \log n)^{1/2} \right) \ll \frac{1}{n^{1/2}} , \]
		and hence
		\[ \lambda \left( \left| \log q_{K_N} - \log N \right| \ge \tau_2 (\log N \log \log N)^{1/2} \right) \ll \frac{1}{(\log N)^{1/2}} \]
		with a suitably large universal constant $\tau_2>0$. The general fact $q_{k+2}/q_k \ge 2$ for all $k \ge 0$ implies that $|\log q_{K_N^*} - \log q_{K_N}| \gg |K_N^* -K_N|$, thus
		\[ \lambda \left( \left| K_N^* - K_N \right| \ge \tau_3 (\log N \log \log N)^{1/2} \right) \le \lambda \left( \left| \log q_{K_N} - \log N \right| \ge \tau_2 (\log N \log \log N)^{1/2} \right) \]
		with a large enough universal constant $\tau_3 >0$, and the claim follows for $\alpha \sim \lambda$.
		
		The density $\mathrm{d}\mu / \mathrm{d}\lambda$ is bounded above by a constant depending only on $L$, hence the tail estimate in the claim remains true for any $\mu$ that satisfies \eqref{lipschitz}.
	\end{proof}
	
	Let
	\[ u_k = \frac{1}{q_{k-1} \| q_{k-1} \alpha \|}, \,\,\, k \ge 1 \qquad \textrm{and} \qquad R(x) = \sum_{1 \le j < (x/2)^{1/2}} \frac{x}{j^2}, \,\,\, x>0. \]
	By a classical identity of continued fractions,
	\begin{equation}\label{uk}
		u_k = [a_k ; a_{k+1}, a_{k+2}, \ldots] + [0;a_{k-1}, a_{k-2}, \ldots, a_1] .
	\end{equation}
	In particular, $a_k < u_k < a_k+2$ for all $k \ge 1$. The graph of $R(x)$ consists of straight line segments with a jump of size $2$ at the points $2m^2$, $m \in \mathbb{N}$. More precisely, let $r_m=\sum_{j=1}^m 1/j^2$ with the convention $r_0=0$. For any integer $m \ge 0$, we have $R(x)=r_m x$ on the interval $x \in (2m^2, 2(m+1)^2]$. In particular, $R(x)=(\pi^2 /6) x + O(x^{1/2})$.
	\begin{lem}\label{Ruklemma} We have
		\[ \sum_{\substack{n=1 \\ 0< \langle n \alpha \rangle < 1/(2n)}}^N \frac{1}{n \langle n \alpha \rangle} = \sum_{\substack{k=1 \\ k \textrm{ odd}}}^{K_N} R(u_k) + \xi_{1,N}(\alpha) \quad \textrm{and} \sum_{\substack{n=1 \\ -1/(2n)< \langle n \alpha \rangle < 0}}^N \frac{1}{n |\langle n \alpha \rangle|} = \sum_{\substack{k=1 \\ k \textrm{ even}}}^{K_N} R(u_k) + \xi_{2,N}(\alpha) \]
		with some error terms $\xi_{j,N}(\alpha)$, $j=1,2$ that satisfy
		\[ \mu \left( \frac{|\xi_{j,N}(\alpha)|}{\log N} \ge \frac{(\log \log N)^{1/4}}{(\log N)^{1/4}} \right) \ll \frac{(\log \log N)^{1/4}}{(\log N)^{1/4}} . \]
	\end{lem}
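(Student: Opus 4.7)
The strategy is to reorganize the sum $S_1:=\sum_{n=1,\, 0<\langle n\alpha\rangle<1/(2n)}^N 1/(n\langle n\alpha\rangle)$ as a double sum over convergent denominators $q_{k-1}$ and their multiples, which is exactly what Legendre's theorem (the reason for the cutoff $1/(2n)$, cf.\ Section \ref{overviewsection}) makes possible. Concretely, each $n$ appearing in $S_1$ has an associated integer $p$ with $n\alpha - p = \langle n\alpha\rangle$ and $|\alpha - p/n| < 1/(2n^2)$; writing $p/n$ in lowest terms realizes it as a unique convergent $p_{k-1}/q_{k-1}$, so $n = jq_{k-1}$ for a unique pair $(k,j)$ with $k\ge 1$, $j\ge 1$. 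Conversely, $\langle jq_{k-1}\alpha\rangle = j\langle q_{k-1}\alpha\rangle$ holds whenever the right side has absolute value below $1/2$, which is automatic for the admissible $(k,j)$ below. Since $\langle q_{k-1}\alpha\rangle$ has sign $(-1)^{k-1}$ (up to a harmless issue at $k=1$ that contributes $O(1)$ to $S_1$), the positivity $\langle n\alpha\rangle>0$ forces $k$ to be odd.

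For admissible pairs, $n\langle n\alpha\rangle = j^2 q_{k-1}\langle q_{k-1}\alpha\rangle$ and hence $1/(n\langle n\alpha\rangle) = u_k/j^2$ exactly, while $\langle n\alpha\rangle<1/(2n)$ translates to $j^2<u_k/2$. Using $\|q_{k-1}\alpha\|\in[1/(q_{k-1}+q_k),\,1/q_k]$, one verifies that for $k\le K_N^*$ the constraint $j<\sqrt{u_k/2}$ forces $jq_{k-1}\le\sqrt{q_{k-1}q_k}\le N$, so the summation bound $n\le N$ is automatically satisfied and the inner $j$-sum equals $R(u_k)$ exactly. For $k=K_N^*+1$ the $n\le N$ condition may genuinely truncate the $j$-range, but the contribution remains bounded by $R(u_{K_N^*+1})\ll u_{K_N^*+1}$, and for $k\ge K_N^*+2$ there is nothing to sum since $q_{k-1}>N$. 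This yields
\[ S_1 = \sum_{\substack{k\text{ odd}\\ 1\le k\le K_N^*}} R(u_k) + O(u_{K_N^*+1}). \]

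It remains to swap the random index $K_N^*$ for the deterministic $K_N$ and to control the edge term $u_{K_N^*+1}$. Set $W=\tau(\log N\log\log N)^{1/2}$ (the window width from Lemma \ref{KNlemma}) and $T=(\log N)^{3/4}(\log\log N)^{1/4}$. By Lemma \ref{KNlemma}, $|K_N^*-K_N|\le W$ off a set of $\mu$-measure $O((\log N)^{-1/2})$, so the swap error is at most $\sum_{|k-K_N|\le W} R(u_k) \ll \sum_{|k-K_N|\le W}(a_k+1)$. Applying Lemma \ref{sumaktailslemma} with block length $\asymp W$ and threshold $t=T/W\asymp (\log N/\log\log N)^{1/4}$ (which exceeds $2\log W$ for large $N$) bounds this block sum by $T$ outside a set of measure $O((\log\log N/\log N)^{1/4})$; the same tail applied as a union bound over the window controls $\max_{|k-K_N|\le W} a_k$, and hence $u_{K_N^*+1}$, by the same threshold $T$. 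Combining these estimates yields $|\xi_{1,N}|\ll T$ with the claimed probability. The sum defining $\xi_{2,N}$ is handled by the identical argument with $k$ even. The principal technical point is the coupled control of the swap error and the single boundary term at the random index $K_N^*+1$; the $1/4$ exponent in the final error emerges from the balance between the window width $W\asymp (\log N\log\log N)^{1/2}$ dictated by Lemma \ref{KNlemma} and the linear growth in $W$ of the tail threshold $T$ from Lemma \ref{sumaktailslemma}.
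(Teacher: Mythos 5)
Your proposal is correct and follows essentially the same route as the paper: Legendre's theorem converts the near-singularity terms into complete blocks $\sum_{1\le j<(u_k/2)^{1/2}} u_k/j^2 = R(u_k)$ with the parity of $k$ fixed by the sign of $q_{k-1}\alpha-p_{k-1}$, and the error from swapping the random index $K_N^*$ for the deterministic $K_N$ together with the boundary block is controlled exactly as in the paper, via Lemma \ref{KNlemma} and Lemma \ref{sumaktailslemma} with window $\asymp(\log N\log\log N)^{1/2}$ and threshold $t\asymp(\log N/\log\log N)^{1/4}$. The only cosmetic differences are that you treat the truncated block at $k=K_N^*+1$ separately (and the small-index mismatch you wave off is really $O(a_1+a_2)$ rather than $O(1)$, though its tail probability $\ll 1/T$ is negligible), whereas the paper absorbs everything into the single error $\sum_{|k-K_N|\le|K_N^*-K_N|+2} a_k$.
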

	
	\begin{proof} Let $k \ge 0$ and $q_k \le n < q_{k+1}$ such that $\| n \alpha \| < 1/(2n)$. Legendre's theorem on Diophantine approximation \cite[p.\ 30]{KH} implies that $q_k \mid n$. A multiple $n=j q_k$ satisfies $\| j q_k \alpha \| < 1/(2jq_k)$ if and only if $1 \le j < (2 q_k \| q_k \alpha \|)^{-1/2}$. Since $\mathrm{sgn}(q_k \alpha - p_k) = (-1)^k$, we have $\langle j q_k \alpha \rangle >0$ if $k$ is even, and $\langle j q_k \alpha \rangle <0$ if $k$ is odd. Therefore
		\[ \begin{split} \sum_{\substack{q_k \le n < q_{k+1} \\ 0< \langle n \alpha \rangle < 1/(2n)}} \frac{1}{n \langle n \alpha \rangle} &= \mathds{1}_{\{ k \textrm{ even} \}} \sum_{1 \le j < (2q_k \| q_k \alpha \|)^{-1/2}} \frac{1}{jq_k \| j q_k \alpha \|} = \mathds{1}_{\{ k \textrm{ even} \}} R(u_{k+1}), \\ \sum_{\substack{q_k \le n < q_{k+1} \\ -1/(2n)< \langle n \alpha \rangle < 0}} \frac{1}{n |\langle n \alpha \rangle|} &= \mathds{1}_{\{ k \textrm{ odd} \}} \sum_{1 \le j < (2q_k \| q_k \alpha \|)^{-1/2}} \frac{1}{jq_k \| j q_k \alpha \|} = \mathds{1}_{\{ k \textrm{ odd} \}} R(u_{k+1}) . \end{split} \]
		For each $k \ge 0$ we have $R(u_{k+1}) \ll a_{k+1}$, hence summing over $k$ leads to the desired formulas with error terms
		\[ |\xi_{j,N} (\alpha)| \ll \sum_{|k-K_N| \le |K_N^*-K_N|+2} a_k, \qquad j=1,2. \]
		Lemma \ref{KNlemma} shows that outside a set of $\mu$-measure $\ll (\log N)^{-1/2}$, the previous sum has $\ll (\log N \log \log N)^{1/2}$ terms. We conclude by an application of Lemma \ref{sumaktailslemma} with $K \approx (\log N \log \log N)^{1/2}$ and $t=(\log N)^{1/4} (\log \log N)^{-1/4}$.
	\end{proof}
	
	Our next goal is to replace $u_k$ by $a_k$ in the sums $\sum_k R(u_k)$. We follow the approach of Samur \cite{SA1}, who considered the same problem under a suitable assumption on the modulus of continuity of the function. Certain modifications are thus needed to handle the discontinuous function $R$. Recall that $\mathcal{A}_{\ell}^k$ denotes the $\sigma$-algebra generated by the partial quotients $a_j$, $\ell \le j \le k$. We use the convention $\mathcal{A}_{\ell}^k = \mathcal{A}_1^k$ if $\ell \le 0$.
	
	\begin{lem}\label{conditionallemma} Let $\mu' \ll \lambda$ be another Borel probability measure on $[0,1]$ that satisfies \eqref{lipschitz}. For any $k \ge 1$ and $\ell \ge 0$,
		\[ \mathbb{E}_{\mu} \left( R(u_k) - R(a_k) - \mathbb{E}_{\mu'} \left( R(u_k) - R(a_k) \mid \mathcal{A}_{k-\ell}^{k+\ell} \right) \right)^2 \ll 2^{-\ell} . \]
	\end{lem}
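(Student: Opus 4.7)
The plan is to approximate $u_k$ by a truncated version $u_k^{(\ell)}$ depending only on partial quotients in the window $[k-\ell,k+\ell]$, and to bound the resulting $L^2$ error. Using the identity \eqref{uk}, I would set
\[
u_k^{(\ell)} := [a_k;a_{k+1},\ldots,a_{k+\ell}] + [0;a_{k-1},\ldots,a_{\max(k-\ell,1)}],
\]
which is $\mathcal{A}$-measurable for $\mathcal{A}:=\mathcal{A}_{k-\ell}^{k+\ell}$. Since $R(a_k)$ and $R(u_k^{(\ell)})$ are $\mathcal{A}$-measurable, the inner conditional expectation kills $R(a_k)$ and $R(u_k^{(\ell)})$ trivially, so the quantity in the lemma rewrites as
\[
Z_{k,\ell} = \bigl(R(u_k)-R(u_k^{(\ell)})\bigr) - \mathbb{E}_{\mu'}\bigl(R(u_k)-R(u_k^{(\ell)}) \,\big|\, \mathcal{A}\bigr).
\]
The triangle inequality in $L^2(\mu)$, Jensen's inequality for conditional expectation, and the comparability $\mathrm{d}\mu/\mathrm{d}\mu' \ll 1$ (immediate from \eqref{lipschitz} for both measures) reduce the lemma to a single, measure-free bound:
\[
\mathbb{E}_{\lambda}\bigl[(R(u_k)-R(u_k^{(\ell)}))^2\bigr] \ll 2^{-\ell}.
\]

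For the deterministic size of the truncation, the standard lower bound $Q_j \ge 2^{(j-1)/2}$ for the convergent denominators of $[a_k;a_{k+1},\ldots]$, together with the analogous bound for the finite reversed expansion, yields $|u_k-u_k^{(\ell)}| \ll 2^{-\ell}$ pointwise on $[0,1]$.

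For the $L^2$ estimate I would exploit the precise structure of $R$: it is linear with slope $r_m \le \pi^2/6$ on each piece $(2m^2,2(m+1)^2]$, with a jump of size exactly $2$ at every $x=2m^2$. On the event that $u_k$ and $u_k^{(\ell)}$ lie in a common piece, one has $|R(u_k)-R(u_k^{(\ell)})| \le (\pi^2/6)|u_k-u_k^{(\ell)}| \ll 2^{-\ell}$. On the complementary jump-crossing event
\[
E_\ell := \bigl\{\exists\, m \ge 1 :\, |u_k-2m^2| \le |u_k-u_k^{(\ell)}|\bigr\},
\]
at most one jump is crossed (the $2m^2$ are spaced by at least $6$, hence by more than $|u_k-u_k^{(\ell)}|$ once $\ell$ is large), so $|R(u_k)-R(u_k^{(\ell)})| \le 3$ uniformly and the second moment reduces to controlling $\mathbb{P}(E_\ell)$.

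The remaining ingredient, and the heart of the proof, is the anti-concentration bound $\mathbb{P}_\lambda(E_\ell) \ll 2^{-\ell}$. Writing $u_k-a_k = [0;a_{k+1},a_{k+2},\ldots]+[0;a_{k-1},\ldots,a_1] \in [0,2]$, the conditional law of $u_k-a_k$ given $a_k=n$ has a density bounded uniformly in $n$, since each summand has a density controlled by the (bounded) Gauss density and their sum's density is the convolution; combined with the Gauss--Kuzmin estimate $\mathbb{P}_\lambda(a_k=n) \ll 1/n^2$, this gives $\mathbb{P}_\lambda(|u_k-2m^2|\le \delta) \ll \delta/m^4$, and summing over $m$ with $\delta \ll 2^{-\ell}$ yields $\mathbb{P}_\lambda(E_\ell)\ll 2^{-\ell}$. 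The main obstacle is precisely this step: a pure Lipschitz/modulus-of-continuity argument in the spirit of Samur~\cite{SA1} fails because of the jumps of $R$, so one genuinely needs quantitative anti-concentration of $u_k$ near the discrete set $\{2m^2 : m\ge 1\}$, and it is essential that the jumps of $R$ are of constant size and that the density of $u_k-a_k$ is bounded uniformly in $a_k$.
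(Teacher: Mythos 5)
Your proof is correct and takes a genuinely different route from the paper's. The paper argues cylinder by cylinder: on each $S = \{a_{k-\ell} = b_{k-\ell}, \ldots, a_{k+\ell} = b_{k+\ell}\}$ the range of $u_k$ is the interval $b_k + I + J$ of length $\le 2\cdot 2^{-\ell}$, so the oscillation of $R(u_k) - R(a_k)$ on $S$ is $\ll 2^{-\ell}$ unless that interval contains a jump of $R$, in which case it is $O(1)$; the deviation from the $\mu'$-conditional expectation is automatically controlled by this oscillation, and the total $\mu$-measure of the bad cylinders is bounded by $\ll 2^{-\ell}$ via a $\psi$-mixing decoupling $\mu(S) \ll \mu(H)\mu(I)$ together with summing the lengths of the intervals $I$ meeting $1-J$. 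Your argument is dual to this: you insert the $\mathcal{A}$-measurable surrogate $u_k^{(\ell)}$, reduce via the $L^2$ triangle inequality, conditional Jensen, and comparability of $\mu$, $\mu'$, $\lambda$ to a plain $L^2(\lambda)$ bound on $R(u_k) - R(u_k^{(\ell)})$, and then control the jump-crossing probability by anti-concentration of $u_k$ near the discrete set $\{2m^2 : m \ge 1\}$. The paper's bad cylinders are precisely your jump-crossing events, so the two proofs are solving the same quantitative problem; yours is slightly more elementary in that it bypasses explicit $\psi$-mixing, needing only the uniformly bounded conditional density of $[0;a_{k+1},a_{k+2},\ldots]$ given $(a_1,\ldots,a_k)$ together with $\lambda(a_k = n) \ll n^{-2}$. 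One small imprecision worth fixing: $[0;a_{k+1},\ldots]$ and $[0;a_{k-1},\ldots,a_1]$ are not independent given $a_k = n$, so describing the conditional density of $u_k - a_k$ as a convolution is not literally correct; instead condition on all of $(a_1,\ldots,a_k)$, which makes $[0;a_{k-1},\ldots,a_1]$ deterministic and leaves $[0;a_{k+1},\ldots]$ with conditional density bounded between $1/2$ and $2$. The resulting uniform bound on the conditional density of $u_k - a_k$ given $a_k = n$ is exactly what your anti-concentration step needs, and the rest of the argument goes through.
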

	
	\begin{proof} First, assume that $k-\ell \ge 1$. Let $b_{k-\ell}, b_{k-\ell+1}, \ldots, b_{k+\ell} \in \mathbb{N}$ be arbitrary, and consider the set $S = \{ \alpha \in [0,1] \, : \, a_{k-\ell}= b_{k-\ell}, \ldots, a_{k+\ell} = b_{k+\ell}  \}$. Further, let $I$ be the set of all continued fractions of the form $[0;b_{k+1}, \ldots, b_{k+\ell}, \ldots]$, and let $J$ be the set of all continued fractions of the form $[0; b_{k-1}, \ldots, b_{k-\ell}, \ldots]$. Then $I,J \subseteq [0,1]$ are intervals with rational endpoints of length $\le 2^{-\ell}$ each.
		
		Now let $\alpha \in S$. Formula \eqref{uk} shows that $u_k \in b_k+I+J = \{ b_k +x+y \, : \, x \in I, \, y \in J \}$. If $1 \not\in I+J$, then the interior of $b_k+I+J$ does not contain any integer. In particular, $R$ does not have a jump in the interior of $b_k+I+J$, and hence $\sup_{\alpha \in S} R(u_k) - \inf_{\alpha \in S} R(u_k) \le (\pi^2/ 3) 2^{-\ell}$. If $1 \in I +J$, then $R$ has at most one jump (of size $2$) in the interior of $b_k+I+J$, and hence $\sup_{\alpha \in S} R(u_k) - \inf_{\alpha \in S} R(u_k) \le (\pi^2/ 3) 2^{-\ell}+2$. Since $R(a_k)=R(b_k)$ is constant on $S$, we obtain
		\[ \sup_{\alpha \in S} (R(u_k) - R(a_k)) - \inf_{\alpha \in S} (R(u_k) - R(a_k)) \le \frac{\pi^2}{3} 2^{-\ell} + \mathds{1}_{\{ 1 \in I+J \}} 2 . \]
		Consequently,
		\[ \left| R(u_k) - R(a_k) - \mathbb{E}_{\mu'} \left( R(u_k) - R(a_k) \mid \mathcal{A}_{k-\ell}^{k+\ell} \right) \right| \le \frac{\pi^2}{3} 2^{-\ell} + \mathds{1}_{\{ 1 \in I+J \}} 2, \quad \alpha \in S, \]
		and so
		\[  \mathbb{E}_{\mu} \left( R(u_k) - R(a_k) - \mathbb{E}_{\mu'} \left( R(u_k) - R(a_k) \mid \mathcal{A}_{k-\ell}^{k+\ell} \right) \right)^2 \ll 2^{-\ell} + \sum_{b_{k-\ell}, \ldots, b_{k+\ell} \in \mathbb{N}} \mathds{1}_{\{ 1 \in I+J \}} \mu (S). \]
		A simple consequence of the mixing properties of the partial quotients is that here $\mu (S) \ll \mu (H) \mu (I)$ with $H= \{ \alpha \in [0,1] \, : \, a_1 = b_{k-\ell}, \, a_2 = b_{k-\ell+1}, \ldots, a_{\ell +1} = b_k \}$. For fixed $b_{k-\ell}, \ldots, b_k$, the total $\mu$-measure of all intervals $I$ that intersect the given interval $1-J$ of length $\le 2^{-\ell}$ is $\ll 2^{-\ell}$. Thus
		\[ \sum_{b_{k-\ell}, \ldots, b_{k+\ell} \in \mathbb{N}} \mathds{1}_{\{ 1 \in I+J \}} \mu (S) \ll \sum_{b_{k-\ell}, \ldots, b_k \in \mathbb{N}} \mu (H) 2^{-\ell} = 2^{-\ell}, \]
		and the claim follows.
		
		The proof in the case $k-\ell <1$ is similar, the only difference is that we use $b_1, \ldots, b_{k+\ell} \in \mathbb{N}$ instead of $b_{k-\ell}, \ldots, b_{k+\ell} \in \mathbb{N}$.
	\end{proof}
	
	\begin{lem}\label{expectedvaluelemma} For any $k \ge 1$, we have $\mathbb{E}_{\mu} (R(u_k) - R(a_k)) = \kappa + O (e^{-ak})$ with the constant
		\begin{equation}\label{kappa}
			\kappa = \frac{\pi^2}{6 \log 2} + \frac{1}{\log 2} \sum_{m=1}^{\infty} \left( \frac{1}{m^2} \log \frac{2m^2 +1}{2m^2} - 2 \log \frac{2m^2 +2}{2m^2 +1} \right) .
		\end{equation}
	\end{lem}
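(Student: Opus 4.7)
The plan is a three-step reduction: from $\mu$ to $\mu_{\mathrm{Gauss}}$, then from $\mu_{\mathrm{Gauss}}$ to the invariant measure of the natural extension of the Gauss map, and finally an explicit evaluation of the resulting integral.

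\emph{Step 1 (reduction to $\mu_{\mathrm{Gauss}}$).} Set $G_{k,\ell} := \mathbb{E}_{\mu_{\mathrm{Gauss}}}(R(u_k) - R(a_k) \mid \mathcal{A}_{k-\ell}^{k+\ell})$. Lemma~\ref{conditionallemma} combined with Cauchy--Schwarz gives $|\mathbb{E}_\nu(R(u_k) - R(a_k)) - \mathbb{E}_\nu G_{k,\ell}| \ll 2^{-\ell/2}$ for both $\nu = \mu$ and $\nu = \mu_{\mathrm{Gauss}}$. For $1 \le \ell \le k-1$, $G_{k,\ell}$ is $\mathcal{A}_{k-\ell}^{\infty}$-measurable and uniformly bounded (since $|R(u_k) - R(a_k)| \le \pi^2/3 + 2$), so the Gauss--Kuzmin theorem~\eqref{gausskuzmin} yields $|\mathbb{E}_\mu G_{k,\ell} - \mathbb{E}_{\mu_{\mathrm{Gauss}}} G_{k,\ell}| \ll e^{-a(k-\ell)}$. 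Balancing at $\ell \asymp k/2$ reduces matters to proving the claim for $\mu = \mu_{\mathrm{Gauss}}$.

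\emph{Step 2 (natural extension).} Equip $[0,1]^2$ with the natural extension $\bar T(x,y) = (\{1/x\}, 1/(a_1(x) + y))$ and its invariant measure $d\bar\mu = \frac{dx\,dy}{\log 2\,(1+xy)^2}$, whose $x$-marginal is $\mu_{\mathrm{Gauss}}$. By \eqref{uk}, iterating $\bar T^{k-1}$ from $(\alpha,0)$ produces our $u_k = 1/x_{k-1} + y_{k-1}$ and $a_k = \lfloor 1/x_{k-1}\rfloor$; iterating from $(\alpha, \beta_0)$ with $(\alpha, \beta_0) \sim \bar\mu$ gives a stationary copy $u_k'$ sharing the same $a_k$. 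The standard contraction bound $|y_{k-1} - y_{k-1}'| \le q_{k-1}^{-2} \ll 2^{-k}$ forces $|u_k - u_k'| \ll 2^{-k}$, and since $u_k'$ has density $\ll v^{-2}$ under $\bar\mu$, $\sum_{m\ge 1}\bar\mu(|u_k' - 2m^2| < 2^{-k}) \ll 2^{-k}$. This controls the jump-crossing error of $R$, giving $\mathbb{E}|R(u_k) - R(u_k')| \ll 2^{-k}$. Combining with Step~1 and the $\bar T$-invariance of $\bar\mu$,
\[
\mathbb{E}_\mu(R(u_k) - R(a_k)) = \kappa + O(e^{-ak}), \quad \kappa := \int_{[0,1]^2} \frac{R(1/x + y) - R(\lfloor 1/x\rfloor)}{\log 2\,(1+xy)^2}\,dx\,dy.
\]

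\emph{Step 3 (closed-form evaluation).} The substitution $u = 1/x \in [1,\infty)$ and $u = n + t$ with $n \ge 1$, $t \in [0,1]$ makes the density $1/(\log 2\,(u+y)^2)$, and the integrand depends only on $s := t + y$, yielding
\[
\kappa \log 2 = \sum_{n=1}^\infty \int_0^2 \frac{R(n+s) - R(n)}{(n+s)^2}\min(s, 2-s)\,ds.
\]
Let $m(n)$ satisfy $2m(n)^2 < n \le 2(m(n)+1)^2$. For "generic" $n$ (those with $[n, n+2]$ missing every jump point $2m^2$), the integrand equals $r_{m(n)}s/(n+s)^2$; the exceptional $n \in \{2m^2 - 1, 2m^2\}$, $m \ge 1$, pick up additional terms accounting for the jump of size $2$ at $2m^2$. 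The universal kernel $I(n) := \int_0^2 s\min(s,2-s)/(n+s)^2\,ds$ telescopes by partial fractions to $2[f(n+1) - f(n)]$ with $f(x) := x\log(1 + 1/x)$. Hence Abel summation on $\sum_n r_{m(n)} I(n)$ (using $r_m - r_{m-1} = 1/m^2$, $r_m \to \pi^2/6$, $f(x) \to 1$) evaluates the generic part as $\pi^2/3 - \sum_{m\ge 1}(4 + 2/m^2)\log\frac{2m^2+2}{2m^2+1}$; direct integration of the jump corrections contributes $\sum_{m\ge 1}\bigl[-1/m^2 + (1/m^2)\log\frac{2m^2+1}{2m^2} + (2 + 2/m^2)\log\frac{2m^2+2}{2m^2+1}\bigr]$. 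Adding and collecting, the coefficients of $\log\frac{2m^2+2}{2m^2+1}$ cancel from $+4$ down to $-2$ and the $-1/m^2$ pieces sum to $-\pi^2/6$, leaving $\pi^2/6$ together with the claimed logarithmic sum, matching \eqref{kappa}.

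The main obstacle is Step~3's exact cancellation: the smooth and jump contributions involve unrelated-looking $\log$-terms and $1/m^2$-terms that combine precisely to yield the stated formula. Steps~1--2 are structural and rely on already-developed tools; the only subtle point there is that the natural-extension coupling tolerates $R$'s discontinuities thanks to the polynomial decay of $u_k'$'s density under $\bar\mu$.
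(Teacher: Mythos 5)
Your proof is correct, and while your Step 1 coincides with the paper's opening move (Lemma~\ref{conditionallemma} plus the Gauss--Kuzmin bound \eqref{gausskuzmin} with $\ell\asymp k/2$ to reduce to a single reference measure), the remainder takes a genuinely different route. The paper reduces to Lebesgue measure, computes the conditional expectation of $R(u_k)-R(a_k)$ given $a_1,\dots,a_{k-1}$ explicitly over cylinder intervals, obtaining a function $W(y)$ of the reversed continued fraction $y=[0;a_{k-1},\dots,a_1]$ whose formula involves $\Gamma'/\Gamma$, and then invokes L\'evy's exponential-rate solution of the Gauss--Kuzmin problem to integrate $W$ against the Gauss density; you instead couple with the stationary natural extension, using the deterministic contraction $|u_k-u_k'|\le q_{k-1}^{-2}\ll 2^{-k}$ together with the $\ll v^{-2}$ density of $u_k'$ to control crossings of the jumps of $R$, which lands you directly on $\kappa=\frac{1}{\log 2}\int_0^1\int_0^1 \frac{R(1/x+y)-R(\lfloor 1/x\rfloor)}{(1+xy)^2}\,\mathrm{d}x\,\mathrm{d}y$; after the substitution $u=1/x$ this is exactly the paper's double integral $\frac{1}{\log 2}\int_0^1\int_1^\infty\frac{R(u+y)-R(\lfloor u\rfloor)}{(u+y)^2}\,\mathrm{d}u\,\mathrm{d}y$, so the two Step 2's are equivalent in outcome. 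Your evaluation is also organized differently and, I would say, more cleanly: instead of integrating in $u$ first (which produces digamma terms that only cancel after the $y$-integration), you integrate out $(t,y)$ at fixed $n$ against the tent kernel $\min(s,2-s)$, use the telescoping identity $\int_0^2 s\min(s,2-s)/(n+s)^2\,\mathrm{d}s=2\bigl(f(n+1)-f(n)\bigr)$ with $f(x)=x\log(1+1/x)$, and Abel-sum; I verified that the generic part $\frac{\pi^2}{3}-\sum_{m\ge1}\bigl(4+\tfrac{2}{m^2}\bigr)\log\frac{2m^2+2}{2m^2+1}$ and your jump corrections for $n\in\{2m^2-1,\,2m^2\}$ (which are indeed the only exceptional $n$) combine exactly to \eqref{kappa}. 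Two cosmetic points: your phrase ``$[n,n+2]$ missing every jump point'' slightly overcounts the exceptional set, since $n=2m^2-2$ is generic because $R$ takes the value $r_{m-1}\cdot 2m^2$ at the jump point itself, but the set you actually use is the right one; and the application of \eqref{gausskuzmin} to the bounded $\mathcal{A}_{k-\ell}^{\infty}$-measurable function $G_{k,\ell}$ requires the routine approximation by simple functions, just as in the paper. What the paper's route buys is that it stays within the machinery already set up (cylinder estimates and L\'evy's theorem, also reused for the variance argument); what yours buys is a shorter identification of $\kappa$ as an explicit invariant integral and a closed-form evaluation that avoids the digamma function entirely.
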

	
	\begin{proof} Let $\ell = \lfloor k/2 \rfloor$, and consider $X_k = R(u_k) - R(a_k)$ and $Y_k =\mathbb{E}_{\mu_{\mathrm{Gauss}}} (R(u_k) - R(a_k) \mid \mathcal{A}_{k-\ell}^{k+\ell})$. Lemma \ref{conditionallemma} shows that
		\[ | \mathbb{E}_{\mu} X_k - \mathbb{E}_{\mu} Y_k | \le \left( \mathbb{E}_{\mu} (X_k-Y_k)^2 \right)^{1/2} \ll 2^{-k/4} . \]
		Since $Y_k$ is bounded and $\mathcal{A}_{k-\ell}^{\infty}$-measurable, an application of the Gauss--Kuzmin theorem \eqref{gausskuzmin} gives $|\mathbb{E}_{\mu} Y_k - \mathbb{E}_{\mu_{\mathrm{Gauss}}} Y_k| \ll e^{-ak}$. Here $\mathbb{E}_{\mu_{\mathrm{Gauss}}} Y_k = \mathbb{E}_{\mu_{\mathrm{Gauss}}} X_k$, hence
		\[ \mathbb{E}_{\mu} (R(u_k) - R(a_k)) = \mathbb{E}_{\mu_{\mathrm{Gauss}}} (R(u_k)-R(a_k)) + O(e^{-ak}) . \]
		In particular, $\mathbb{E}_{\mu} (R(u_k) - R(a_k))$ has the same value for any $\mu \ll \lambda$ that satisfies \eqref{lipschitz} up to an error $O(e^{-ak})$. It is thus enough to compute the expected value with respect to $\lambda$.
		
		Fix $b_1, b_2, \ldots, b_{k-1} \in \mathbb{N}$, and consider $I=\{ \alpha \in [0,1] \, : \, a_1 = b_1, \ldots, a_{k-1} = b_{k-1} \}$. The endpoints of the interval $I$ are $(p_{k-2}+p_{k-1}) / (q_{k-2} + q_{k-1})$ and $p_{k-1}/q_{k-1}$, and it has length $\lambda (I) = 1/(q_{k-1} (q_{k-1} + q_{k-2}))$. Let us write $\alpha \in I$ in the form $\alpha = [0;b_1, \ldots, b_{k-1}, x] = (p_{k-1} x + p_{k-2}) / (q_{k-1} x + q_{k-2})$ with $x=[a_k;a_{k+1}, \ldots]$, and let $y_k = q_{k-2}/q_{k-1} = [0;b_{k-1}, b_{k-2}, \ldots, b_1]$. Then $u_k = x +y_k$ by \eqref{uk} and $a_k=\lfloor x \rfloor$, hence
		\[ \int_I (R(u_k) - R(a_k)) \, \mathrm{d} \lambda (\alpha) = \int_1^{\infty} \frac{R(x+y_k) - R(\lfloor x \rfloor)}{(q_{k-1} x + q_{k-2})^2} \, \mathrm{d}x =\lambda (I) (1+y_k) \int_1^{\infty} \frac{R(x+y_k)-R(\lfloor x \rfloor)}{(x+y_k)^2} \, \mathrm{d}x . \]
		Therefore
		\begin{equation}\label{conditionalH}
			\mathbb{E}_{\lambda} \left( R(u_k) - R(a_k) \mid [0; a_{k-1}, a_{k-2}, \ldots, a_1] =y \right) = W(y)
		\end{equation}
		with the function
		\[ W(y) = (1+y) \int_1^{\infty} \frac{R(x+y)-R(\lfloor x \rfloor)}{(x+y)^2} \, \mathrm{d}x, \qquad y \in [0,1] . \]
		
		We proceed by computing the function $W$. Recall that $R(x)=r_m x$ on the interval $x \in (2m^2, 2(m+1)^2]$, where $r_m = \sum_{j=1}^m 1/j^2$. We first integrate on the interval $x \in (1,2)$, where $R(\lfloor x \rfloor) =R(1)=0$ and $R(x+y) = \mathds{1}_{(2-y,2)} (x)(x+y)$:
		\begin{equation}\label{integral12}
			\int_1^2 \frac{R(x+y)-R(\lfloor x \rfloor)}{(x+y)^2} \, \mathrm{d}x = \int_{2-y}^2 \frac{1}{x+y} \, \mathrm{d}x = \log \frac{2+y}{2} .
		\end{equation}
		Now let $m \ge 1$ and $2m^2 \le n < 2(m+1)^2$, and let us integrate on the interval $x \in (n,n+1)$. If $2m^2 +1 \le n \le 2(m+1)^2-2$, then $R(\lfloor x \rfloor) = R(n) = r_m n$ and $R(x+y) = r_m (x+y)$, thus
		\[ \int_n^{n+1} \frac{R(x+y)-R(\lfloor x \rfloor)}{(x+y)^2} \, \mathrm{d}x = r_m \left( \log \frac{n+1+y}{n+y} - \frac{n}{(n+y)(n+1+y)} \right) . \]
		If $n=2m^2$, then $R(\lfloor x \rfloor) = R(n) = r_{m-1} n = r_m n - 2$ and $R(x+y) = r_m (x+y)$, thus
		\[ \int_n^{n+1} \frac{R(x+y)-R(\lfloor x \rfloor)}{(x+y)^2} \, \mathrm{d}x = r_m \left( \log \frac{n+1+y}{n+y} - \frac{n}{(n+y)(n+1+y)} \right) + \frac{2}{(2m^2 +y)(2m^2 +1+y)} . \]
		If $n=2(m+1)^2-1$, then $R(\lfloor x \rfloor) = R(n) = r_m n$ and $R(x+y)=r_m (x+y)$ on $x \in (n,n+1-y)$ and $R(x+y) = r_{m+1}(x+y) = r_m (x+y) + (x+y)/(m+1)^2$ on $x \in (n+1-y,n+1)$, thus
		\[ \begin{split} \int_n^{n+1} \frac{R(x+y)-R(\lfloor x \rfloor)}{(x+y)^2} \, \mathrm{d}x &= r_m \left( \log \frac{n+1+y}{n+y} - \frac{n}{(n+y)(n+1+y)} \right) \\ &\phantom{={}} + \frac{1}{(m+1)^2} \log \frac{2(m+1)^2 +y}{2(m+1)^2} . \end{split} \]
		Summing the previous three formulas over $2m^2 \le n < 2(m+1)^2$ yields
		\[ \begin{split} \int_{2m^2}^{2(m+1)^2} \frac{R(x+y)-R(\lfloor x \rfloor)}{(x+y)^2} \, \mathrm{d}x &= r_m \sum_{n=2m^2}^{2(m+1)^2-1} \left( \log \frac{n+1+y}{n+y} - \frac{n}{(n+y)(n+1+y)} \right) \\ &\phantom{={}} +\frac{2}{(2m^2 +y)(2m^2 +1+y)} + \frac{1}{(m+1)^2} \log \frac{2(m+1)^2 +y}{2(m+1)^2} . \end{split} \]
		Summation by parts then leads to
		\[ \begin{split} \int_{2}^{\infty} \frac{R(x+y)-R(\lfloor x \rfloor)}{(x+y)^2} \, \mathrm{d}x &= \sum_{m=1}^{\infty} \frac{-1}{(m+1)^2} \sum_{n=2}^{2(m+1)^2 -1} \left( \log \frac{n+1+y}{n+y} - \frac{n}{(n+y)(n+1+y)} \right) \\ &\phantom{={}}+\frac{\pi^2}{6} \sum_{n=2}^{\infty} \left( \log \frac{n+1+y}{n+y} - \frac{n}{(n+y)(n+1+y)} \right) \\ &\phantom{={}}+\sum_{m=1}^{\infty} \frac{2}{(2m^2 +y)(2m^2 +1+y)} + \sum_{m=2}^{\infty} \frac{1}{m^2} \log \frac{2m^2 +y}{2m^2} . \end{split} \]
		The sums over $n$ are telescoping:
		\[ \sum_{n=2}^{N-1} \left( \log \frac{n+1+y}{n+y} - \frac{n}{(n+y)(n+1+y)} \right) = \log \frac{N+y}{2+y} + \frac{N}{N+y} - \frac{1}{2+y} - \sum_{n=2}^N \frac{1}{n+y}. \]
		Using the relation $\lim_{N\to \infty} (\log (N+y) - \sum_{n=2}^N 1/(n+y)) =\Gamma'(1+y)/\Gamma (1+y) + 1/(1+y)$, after some simplification we arrive at
		\[ \begin{split} &\sum_{m=1}^{\infty} \frac{-1}{(m+1)^2} \sum_{n=2}^{2(m+1)^2 -1} \left( \log \frac{n+1+y}{n+y} - \frac{n}{(n+y)(n+1+y)} \right) \\ &+\frac{\pi^2}{6} \sum_{n=2}^{\infty} \left( \log \frac{n+1+y}{n+y} - \frac{n}{(n+y)(n+1+y)} \right) \\ &= \sum_{m=1}^{\infty} \left( -\frac{\log (2m^2+y)}{m^2} - \frac{2}{2m^2+y} + \frac{1}{m^2} \sum_{n=2}^{2m^2} \frac{1}{n+y} \right) + \frac{\pi^2}{6} \left( 1+\frac{\Gamma'(1+y)}{\Gamma (1+y)}+\frac{1}{1+y} \right) . \end{split} \]
		Together with \eqref{integral12}, we thus obtain
		\[ \begin{split} \int_{1}^{\infty} \frac{R(x+y)-R(\lfloor x \rfloor)}{(x+y)^2} \, \mathrm{d}x &= \sum_{m=1}^{\infty} \left( - \frac{\log (2m^2)}{m^2} - \frac{2}{2m^2+1+y} + \frac{1}{m^2} \sum_{n=2}^{2m^2} \frac{1}{n+y} \right) \\ &\phantom{={}}+\frac{\pi^2}{6} \left( 1+\frac{\Gamma'(1+y)}{\Gamma (1+y)}+\frac{1}{1+y} \right) , \end{split} \]
		which yields an explicit formula for the function $W(y)$. One readily checks that $W$ is $C^1$, and hence is in particular of bounded variation on $[0,1]$.
		
		Formula \eqref{conditionalH} together with the byproduct of L\'evy's solution to the Gauss--Kuzmin problem \cite[p.\ 41]{IK}
		\[ \sup_{y \in [0,1]} \left| \lambda ([0;a_{k-1}, \ldots, a_1] \le y) - \mu_{\mathrm{Gauss}}([0,y]) \right| \ll e^{-ak} \]
		thus yield the desired formula
		\[ \begin{split} \mathbb{E}_{\lambda} (R(u_k) - R(a_k)) &= \mathbb{E}_{\lambda} W([0;a_{k-1}, \ldots, a_1]) = \frac{1}{\log 2} \int_0^1 \frac{W(y)}{1+y} \, \mathrm{d} y + O(e^{-ak}) \\ &= \frac{1}{\log 2} \sum_{m=1}^{\infty} \left( \frac{1}{m^2} \log \frac{2m^2 +1}{2m^2} - 2 \log \frac{2m^2 +2}{2m^2 +1} \right) + \frac{\pi^2}{6\log 2} + O (e^{-ak}). \end{split} \]
	\end{proof}
	
	\begin{lem}\label{uktoaklemma} We have
		\[ \sum_{\substack{k=1 \\ k \textrm{ odd}}}^{K_N} R(u_k) = \sum_{\substack{k=1 \\ k \textrm{ odd}}}^{K_N} R(a_k) + \frac{\kappa}{2} K_N + \xi_{1,N}(\alpha) \quad \textrm{and} \quad \sum_{\substack{k=1 \\ k \textrm{ even}}}^{K_N} R(u_k) = \sum_{\substack{k=1 \\ k \textrm{ even}}}^{K_N} R(a_k) + \frac{\kappa}{2} K_N + \xi_{2,N}(\alpha) \]
		with the constant $\kappa$ defined in \eqref{kappa} and some error terms $\xi_{j,N}(\alpha)$, $j=1,2$ that satisfy
		\[ \mu \left( \frac{|\xi_{j,N}(\alpha)|}{\log N} \ge \frac{1}{(\log N)^{1/3}} \right) \ll \frac{1}{(\log N)^{1/3}} . \]
	\end{lem}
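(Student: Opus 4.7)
The plan is to view $Y_k := R(u_k) - R(a_k)$ as a uniformly bounded, weakly dependent process and to bound the fluctuations of $\sum_{k \text{ odd},\, k\le K_N} Y_k$ around its mean by a variance/Chebyshev argument. Lemma \ref{expectedvaluelemma} already gives $\mathbb{E}_{\mu} Y_k = \kappa + O(e^{-ak})$, so $\sum_{k \text{ odd},\, k\le K_N} \mathbb{E}_{\mu} Y_k = (\kappa/2)K_N + O(1)$, which exactly matches the centering in the statement. What remains is to show that the centered sum is $o(\log N)$ with the advertised rate.

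First I would verify uniform boundedness of $Y_k$: by \eqref{uk} we have $a_k \le u_k \le a_k+2$, and $R$ is piecewise linear with slope at most $\pi^2/6$ and jumps of size $2$ located at the points $2m^2$ (so spaced at least $6$ apart). Hence $[a_k, a_k+2]$ contains at most one jump of $R$, giving $|Y_k| \ll 1$ uniformly in $k$ and $\alpha$. This bound is essential when controlling covariance contributions from ``close'' pairs of indices.

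To exploit mixing I would truncate $Y_k$ to its conditional expectation
\[
\widehat{Y}_k := \mathbb{E}_{\mu}\!\left( Y_k \,\middle|\, \mathcal{A}_{k-\ell}^{k+\ell} \right), \qquad \ell := \lfloor C \log\log N \rfloor ,
\]
for a sufficiently large constant $C$. Applying Lemma \ref{conditionallemma} with $\mu'=\mu$ yields $\|Y_k - \widehat{Y}_k\|_{L^2(\mu)} \ll 2^{-\ell/2}$, so by Minkowski $\|\sum(Y_k - \widehat{Y}_k)\|_{L^2(\mu)} \ll K_N \cdot 2^{-\ell/2}$, which is polynomially small in $\log N$ and can be absorbed. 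The core is then to bound $\mathrm{Var}_{\mu}\!\bigl(\sum_{k \text{ odd},\,k\le K_N} \widehat{Y}_k\bigr)$: pairs with $|j-k|\le 2\ell$ contribute $|\mathrm{Cov}_{\mu}(\widehat{Y}_j,\widehat{Y}_k)| \ll 1$ by boundedness, for a total of $O(K_N\ell)$; for $|j-k|>2\ell$ the $\sigma$-algebras $\mathcal{A}_{j-\ell}^{j+\ell}$ and $\mathcal{A}_{k-\ell}^{k+\ell}$ are separated by $|j-k|-2\ell$, and the $\psi$-mixing bound $\psi(n)\ll e^{-an}$ forces $|\mathrm{Cov}_{\mu}(\widehat{Y}_j,\widehat{Y}_k)| \ll e^{-a(|j-k|-2\ell)}$, contributing only $O(K_N)$. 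Altogether $\mathrm{Var}_{\mu}\!\bigl(\sum \widehat{Y}_k\bigr) \ll K_N\ell \ll \log N \cdot \log\log N$.

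Combining the three pieces (approximation error, main variance, expectation expansion from Lemma \ref{expectedvaluelemma}) by the triangle inequality in $L^2$ and applying Chebyshev yields the tail estimate for $\xi_{1,N}$; the even-index case is treated identically. The step I expect to be the main obstacle is extracting the clean rate $(\log N)^{-1/3}$ in both the threshold and the probability: a direct Chebyshev with variance $\log N\cdot \log\log N$ and threshold $(\log N)^{2/3}$ loses a factor of $\log\log N$, so some care is needed, either by tuning $\ell$ to optimally trade the $L^2$ approximation error against the variance growth, or by replacing Chebyshev with a sharper moment or Bernstein-type inequality for sums of bounded $\psi$-mixing variables.
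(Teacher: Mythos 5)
Your overall strategy is the paper's: center via Lemma \ref{expectedvaluelemma}, replace $R(u_k)-R(a_k)$ by conditional expectations onto windows $\mathcal{A}_{k-\ell}^{k+\ell}$ using Lemma \ref{conditionallemma}, exploit exponential $\psi$-mixing for the covariances, and finish with Chebyshev. But as you yourself flag, your version does not reach the stated rate, and this is a genuine gap: with a single global window $\ell \asymp \log\log N$ the near-diagonal pairs $|j-k|\le 2\ell$ can only be bounded trivially, so your variance bound is $\asymp K_N\ell \asymp \log N\log\log N$, and Chebyshev at threshold $(\log N)^{2/3}$ gives $(\log N)^{-1/3}\log\log N$, not $(\log N)^{-1/3}$. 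Moreover, your first proposed repair, tuning the single global $\ell$, cannot work: the crude Minkowski (or Cauchy--Schwarz) treatment of $\sum_k (Y_k-\widehat{Y}_k)$ forces $2^{-\ell}\lesssim 1/K_N$, i.e.\ $\ell\gtrsim \log K_N$, and then the near-diagonal contribution is again $\gtrsim K_N\log K_N$. A Bernstein-type inequality for bounded $\psi$-mixing sequences (your second suggestion) would recover the rate, but it is heavier machinery than needed.

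The missing idea, which is how the paper proceeds, is to choose the window size \emph{per pair}: writing $\eta_k = R(u_k)-R(a_k)-\mathbb{E}_{\mu}(R(u_k)-R(a_k))$, for a fixed pair $j\le k$ take $\ell=\lfloor (k-j)/3\rfloor$ and set $\eta_j^*=\mathbb{E}_{\mu}(\eta_j\mid\mathcal{A}_{j-\ell}^{j+\ell})$, $\eta_k^*=\mathbb{E}_{\mu}(\eta_k\mid\mathcal{A}_{k-\ell}^{k+\ell})$. Lemma \ref{conditionallemma} gives $\mathbb{E}_{\mu}(\eta_j-\eta_j^*)^2\ll 2^{-(k-j)/3}$ and likewise for $k$, while the two windows are separated by at least $(k-j)/3$ indices, so $\psi$-mixing gives $|\mathbb{E}_{\mu}\eta_j^*\eta_k^*|\ll e^{-a(k-j)}$; together $|\mathbb{E}_{\mu}\eta_j\eta_k|\ll e^{-a(k-j)}$ for \emph{every} pair, including nearby ones. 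Summing this exponentially decaying bound yields $\mathbb{E}_{\mu}\bigl(\sum_{k\le K_N,\,k\ \mathrm{odd}}\eta_k\bigr)^2\ll K_N\ll\log N$ with no spurious $\log\log N$, and plain Chebyshev at threshold $(\log N)^{2/3}$ then gives exactly the claimed $\ll(\log N)^{-1/3}$. With this modification (and your boundedness and centering observations, which are correct and match the paper) your argument becomes the paper's proof.
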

	
	\begin{proof} Let $\eta_k = R(u_k)-R(a_k) - \mathbb{E}_{\mu} (R(u_k)-R(a_k))$. Clearly, these are mean zero, and uniformly bounded. By Lemma \ref{expectedvaluelemma}, the desired formulas hold with error terms
		\[ \xi_{1,N}(\alpha) = \sum_{\substack{k=1 \\ k \textrm{ odd}}}^{K_N} \eta_k + O(1) \quad \textrm{and} \quad \xi_{2,N}(\alpha) = \sum_{\substack{k=1 \\ k \textrm{ even}}}^{K_N} \eta_k + O(1) . \]
		
		Fix $1 \le j \le k$, and let $\ell = \lfloor (k-j)/3 \rfloor$. Consider $\eta_j^*=\mathbb{E}_{\mu} (\eta_j \mid \mathcal{A}_{j-\ell}^{j+\ell})$ and $\eta_k^*=\mathbb{E}_{\mu} (\eta_k \mid \mathcal{A}_{k-\ell}^{k+\ell})$. An application of Lemma \ref{conditionallemma} yields $\mathbb{E}_{\mu} (\eta_j - \eta_j^*)^2 \ll 2^{-(k-j)/3}$ and $\mathbb{E}_{\mu} (\eta_k - \eta_k^*)^2 \ll 2^{-(k-j)/3}$. On the other hand, $\eta_j^*$ and $\eta_k^*$ are functions of partial quotients whose indices are separated by at least $(k-j)/3$, hence by exponentially fast $\psi$-mixing, $|\mathbb{E}_{\mu} \eta_j^* \eta_k^* | \ll e^{-a (k-j)}$. Therefore $|\mathbb{E}_{\mu} \eta_j \eta_k | \ll e^{-a(k-j)}$, and
		\[ \mathbb{E}_{\mu} \Bigg( \sum_{\substack{k=1 \\ k \textrm{ odd}}}^{K_N} \eta_k \Bigg)^2 \le 2 \sum_{1 \le j \le k \le K_N} |\mathbb{E}_{\mu} \eta_j \eta_k|  \ll K_N \ll \log N. \]
		The same holds with ``odd'' replaced by ``even''. We conclude by an application of the Chebyshev inequality.
	\end{proof}
	
	In the last two lemmas of this section, we find the limit distribution of $\sum_{k} R(a_k)$. We follow the approach of Heinrich \cite{HE}.
	\begin{lem}\label{gausstransformationlemma} For any $t_1, t_2 \in \mathbb{R}$,
		\[ \begin{split} \mathbb{E}_{\mu_{\mathrm{Gauss}}} \left( e^{i (t_1 R(a_1) + t_2 R(a_2))} -1 \right) &= - \frac{\pi^3}{12 \log 2} |t_1| - i \frac{\pi^2}{6 \log 2} t_1 \log |t_1| - i \kappa' t_1 \\ &\phantom{={}} - \frac{\pi^3}{12 \log 2} |t_2| - i \frac{\pi^2}{6 \log 2} t_2 \log |t_2| - i \kappa' t_2 +O \left( |t_1|^{3/2} + |t_2|^{3/2} \right) \end{split} \]
		with the constant
		\begin{equation}\label{kappa'}
			\kappa' = \frac{\pi^2}{6 \log 2} \left( \gamma + \log \frac{\pi^2}{6} \right) + \frac{1}{\log 2} \sum_{m=1}^{\infty} \left( \frac{\log (2m^2 +1)}{m^2} - 2 \log \frac{2m^2 +2}{2m^2 +1} \right) .
		\end{equation}
	\end{lem}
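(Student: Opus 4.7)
My plan is to split the proof into two parts: first, a decoupling argument reducing the joint characteristic function to a sum of two single-variable CFs, and then the asymptotic evaluation of the single CF.

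\textbf{Step 1: Decoupling.} I would use the algebraic identity
\[ e^{i(t_1 X + t_2 Y)} - 1 = (e^{it_1 X} - 1) + (e^{it_2 Y} - 1) + (e^{it_1 X} - 1)(e^{it_2 Y} - 1) \]
with $X = R(a_1)$, $Y = R(a_2)$, and show the cross term has expectation $O(|t_1|^{3/2} + |t_2|^{3/2})$. A direct computation shows that the cylinder $\{a_1 = n, a_2 = m\}$ is an interval of Lebesgue length $1/((n(m+1)+1)(nm+1))$, hence $\mu_{\mathrm{Gauss}}(a_1 = n, a_2 = m) \ll (nm)^{-2}$. Combined with $|e^{itR(n)} - 1| \le \min(2, |t|R(n)) \ll \min(2, |t|n)$, this gives
\[ \bigl| \mathbb{E}_{\mu_{\mathrm{Gauss}}}\bigl[(e^{it_1 R(a_1)} - 1)(e^{it_2 R(a_2)} - 1)\bigr] \bigr| \ll \prod_{j=1,2}\sum_{n \ge 1}\frac{\min(2, |t_j|n)}{n^2} \ll |t_1|\log\tfrac{1}{|t_1|} \cdot |t_2|\log\tfrac{1}{|t_2|} \]
for $|t_j| \in (0, 1]$. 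Since $|t|\log(1/|t|) \ll |t|^{3/4}$ on $(0, 1]$ and $|t_1|^{3/4}|t_2|^{3/4} \le (|t_1|^{3/2} + |t_2|^{3/2})/2$ by AM--GM, the cross term has the required order; for $\max(|t_1|, |t_2|) \ge 1$ the bound is trivial from $|e^{it_j R(a_j)} - 1| \le 2$.

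\textbf{Step 2: Single-variable CF.} It remains to establish
\[ \mathbb{E}_{\mu_{\mathrm{Gauss}}}(e^{itR(a_1)} - 1) = -\tfrac{\pi^3}{12\log 2}|t| - i\tfrac{\pi^2}{6\log 2}\, t\log|t| - i\kappa' t + O(|t|^{3/2}). \]
Using $p_n = \log(1 + 1/(n(n+2)))/\log 2 = \mu_{\mathrm{Gauss}}(a_1 = n)$ and Abel summation, I would convert the sum to a Fourier integral
\[ \mathbb{E}_{\mu_{\mathrm{Gauss}}}(e^{itR(a)} - 1) = it\int_0^\infty e^{ity}\,\bar F(y)\, dy, \qquad \bar F(y) = \mu_{\mathrm{Gauss}}(R(a_1) > y). \]
Since $R(n) = r_m n$ on $(2m^2, 2(m+1)^2]$ with $r_m \to \pi^2/6$ and $p_n \sim 1/(n^2\log 2)$, the tail satisfies $\bar F(y) = c/y + h(y)$ with $c = \pi^2/(6\log 2)$ and an error $h$ that is a step function whose jumps encode the blocks. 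The classical asymptotic $\int_{y_0}^\infty e^{ity}/y\, dy = -\gamma - \log(ty_0) + i\pi/2 + O(t^2)$ (via the sine and cosine integrals $\mathrm{Si}$ and $\mathrm{Ci}$) extracts the leading $|t|$ and $t\log|t|$ terms plus the $\frac{\pi^2}{6\log 2}(\gamma + \log(\pi^2/6))$ piece of $\kappa'$, while the correction $it \int_0^\infty e^{ity} h(y)\, dy$ can be evaluated block-by-block, producing the remaining series in \eqref{kappa'} with total error $O(|t|^{3/2})$ via the Taylor bound $|e^{itx} - 1 - itx\,\mathds{1}_{|tx| \le 1}| \le \min(2, (tx)^2)$.

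\textbf{Main obstacle.} The chief difficulty is the precise bookkeeping of the additive constant $\kappa'$, which is assembled from three contributions: the Euler--Mascheroni constant arising in the Fourier asymptotic, the logarithm $\log(\pi^2/6)$ coming from the effective tail scale of $R(a_1)$, and the explicit series capturing the deviation of the step function $R$ from the linear function $y \mapsto (\pi^2/6)\,y$. The identity $\log(1 + 1/(n(n+2))) = 2\log(n+1) - \log n - \log(n+2)$ enables telescoping within each block, but the careful matching of boundary terms between consecutive blocks $(2m^2, 2(m+1)^2]$ is needed to isolate exactly the coefficients appearing in \eqref{kappa'}.
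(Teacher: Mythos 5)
Your decoupling step is exactly the paper's: the same algebraic identity, the same cylinder estimate $\mu_{\mathrm{Gauss}}(a_1=n,a_2=m)\ll (nm)^{-2}$, the same bound $|t|\log(1/|t|)$ for the single-variable factor, and the same use of AM--GM to fold the cross term into $O(|t_1|^{3/2}+|t_2|^{3/2})$. Where you genuinely diverge is in Step 2. The paper does \emph{not} Fourier-analyze the tail distribution of $R(a_1)$ from scratch; it cites the ready-made asymptotic expansion \cite[Corollary 3.3]{BD} for $\mathbb{E}_{\mu_{\mathrm{Gauss}}}(e^{it(\pi^2/6)a_1}-1)$, which already delivers the $|t|$, $t\log|t|$, and $\gamma+\log(\pi^2/6)$ contributions in one stroke, and then only computes the \emph{correction} $\mathbb{E}_{\mu_{\mathrm{Gauss}}}(e^{itR(a_1)}-e^{it(\pi^2/6)a_1})$. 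Because $R(n)-(\pi^2/6)n=O(n^{1/2})$, that correction contributes only a term linear in $t$ (absorbed into $\kappa'$) plus an $O(|t|^{3/2})$ error, and the explicit series in \eqref{kappa'} drops out cleanly from the telescoping identity you also spotted. This split is what makes the bookkeeping manageable.

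Your tail-integral route $\mathbb{E}(e^{itX}-1)=it\int_0^\infty e^{ity}\Pr(X>y)\,dy$ with $\bar F(y)=c/y+h(y)$, $c=\pi^2/(6\log 2)$, $h(y)=O(y^{-3/2})$, is sound in principle and can also yield the $O(|t|^{3/2})$ error, so this is a legitimate more self-contained alternative. However, as you yourself flag, the plan stops short of the hard part: it asserts but does not verify that the constant assembled from (i) the Si/Ci expansion, (ii) the rescaling that produces $\log(\pi^2/6)$, and (iii) the block-by-block integral of $h$ actually equals the series in \eqref{kappa'}. In the paper's scheme those three pieces are cleanly separated because (i) and (ii) are outsourced to \cite{BD} and only (iii) requires computation, whereas in your scheme all three are entangled in one integral of the step-function tail, which makes the boundary matching between consecutive blocks $(2m^2,2(m+1)^2]$ materially harder. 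So: correct decoupling, different and plausible but unfinished method for the single-variable characteristic function; to complete it you would need to carry out the Abel-summation/telescoping in detail and check that the assembled constant matches \eqref{kappa'}, or simply cite \cite[Corollary 3.3]{BD} as the paper does.
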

	
	\begin{proof} We may assume that $|t_1|, |t_2| \le 1/2$, otherwise the error term in the claim is larger than the main term. Recall that $R(x)=(\pi^2 /6) x + O(x^{1/2})$.
		
		Let us write
		\[ e^{i (t_1 R(a_1) + t_2 R(a_2))} -1 = \left( e^{i t_1 R(a_1)} -1 \right) \left( e^{i t_2 R(a_2)} -1 \right) + e^{it_1 R(a_1)} -1 + e^{it_2 R(a_2)} -1 . \]
		Since $\mu_{\mathrm{Gauss}} (a_1 = b_1, \, a_2 = b_2) \ll b_1^{-2} b_2^{-2}$, we have
		\[ \begin{split} \left| \mathbb{E}_{\mu_{\mathrm{Gauss}}} \left( e^{i t_1 R(a_1)} -1 \right) \left( e^{i t_2 R(a_2)} -1 \right) \right| &\ll \sum_{b_1, b_2 =1}^{\infty} \frac{|e^{it_1 R(b_1)} -1| \cdot |e^{it_2 R(b_2)}-1|}{b_1^2 b_2^2} \\ &\ll \sum_{b_1=1}^{\infty} \frac{\min \{ 1, |t_1| b_1 \}}{b_1^2} \sum_{b_2=1}^{\infty} \frac{\min \{ 1, |t_2| b_2 \}}{b_2^2} \\ &\ll |t_1| \log \frac{1}{|t_1|} \cdot |t_2| \log \frac{1}{|t_2|} \\ &\ll |t_1|^2 \log^2 \frac{1}{|t_1|} + |t_2|^2  \log^2 \frac{1}{|t_2|} . \end{split} \]
		As $a_1$ and $a_2$ are identically distributed, the previous two formulas yield
		\[ \mathbb{E}_{\mu_{\mathrm{Gauss}}} \left( e^{i (t_1 R(a_1) + t_2 R(a_2))} -1 \right) = w(t_1) + w(t_2) + O\left( |t_1|^2 \log^2 \frac{1}{|t_1|}+ |t_2|^2 \log^2 \frac{1}{|t_2|} \right) \]
		with the same function $w(t)=\mathbb{E}_{\mu_{\mathrm{Gauss}}} (e^{itR(a_1)} -1)$. It remains to estimate $w(t)$.
		
		By \cite[Corollary 3.3]{BD}, the same expectation with $R(a_1)$ replaced by $(\pi^2 /6) a_1$ is
		\begin{equation}\label{hwitha1}
			\mathbb{E}_{\mu_{\mathrm{Gauss}}} (e^{it (\pi^2 /6) a_1} -1) = - \frac{\pi^3}{12 \log 2} |t| - i \frac{\pi^2}{6 \log 2} t \log |t| - i \frac{\pi^2}{6 \log 2} \left( \gamma + \log \frac{\pi^2}{6} \right) t + O \left( t^2 \log \frac{1}{|t|} \right) .
		\end{equation}
		Consider now
		\[ \mathbb{E}_{\mu_{\mathrm{Gauss}}} \left( e^{it R(a_1)} - e^{it (\pi^2 /6) a_1} \right) = \frac{1}{\log 2} \sum_{n=1}^{\infty} e^{i t (\pi^2 /6) n} \left( e^{i t (R(n) - (\pi^2 /6)n )} -1 \right) \log \left( 1+\frac{1}{n (n+2)} \right) . \]
		The contribution of the terms $n > 1/t^2$ is $O(t^2)$. The Taylor expansion $e^{it (R(n)-(\pi^2 /6) n)}-1 = it (R(n) - (\pi^2 /6)n) + O(t^2 n)$ for the terms $n \le 1/t^2$ leads to
		\begin{multline*}
			\mathbb{E}_{\mu_{\mathrm{Gauss}}} \left( e^{it R(a_1)} - e^{it (\pi^2 /6) a_1} \right) \\ = \frac{it}{\log 2} \sum_{1 \le n \le 1/t^2} e^{i t (\pi^2 /6) n} (R(n) - (\pi^2 /6)n) \log \left( 1+\frac{1}{n (n+2)} \right) + O \left( t^2 \log \frac{1}{|t|} \right) .
		\end{multline*}
		Here the contribution of the terms $1/|t| < n \le 1/t^2$ is $O(|t|^{3/2})$. Using $e^{it (\pi^2 /6)n} = 1+O(|t| n)$ for the terms $n \le 1/|t|$, and finally extending the range of summation yields
		\[ \mathbb{E}_{\mu_{\mathrm{Gauss}}} \left( e^{it R(a_1)} - e^{it (\pi^2 /6) a_1} \right) = \frac{it}{\log 2} \sum_{n=1}^{\infty} (R(n) - (\pi^2 /6)n) \log \left( 1+\frac{1}{n (n+2)} \right) + O(|t|^{3/2}) . \]
		The previous formula together with \eqref{hwitha1} shows that
		\[ w(t) = - \frac{\pi^3}{12 \log 2} |t| - i \frac{\pi^2}{6 \log 2} t \log |t| - i \kappa' t + O(|t|^{3/2}) \]
		with the constant
		\[ \kappa'=\frac{\pi^2}{6 \log 2} \left( \gamma + \log \frac{\pi^2}{6} \right) - \frac{1}{\log 2} \sum_{n=1}^{\infty} (R(n) - (\pi^2 /6)n) \log \left( 1+\frac{1}{n (n+2)} \right) , \]
		which proves the claim of the lemma up to the value of the constant $\kappa'$.
		
		Finally, we show that $\kappa'$ is as in \eqref{kappa'}. For any integers $m \ge 0$ and $2m^2 < n \le 2(m+1)^2$, we have $R(n)=r_m n$. Summation by parts leads to
		\[ \begin{split} \sum_{n=1}^{\infty} (R(n) - (\pi^2 /6)n) \log \left( 1+\frac{1}{n (n+2)} \right) &= \sum_{m=0}^{\infty} \left( r_m - \frac{\pi^2}{6} \right) \sum_{n=2m^2 +1}^{2(m+1)^2} n \log \left( 1+\frac{1}{n(n+2)} \right) \\ &= \sum_{m=0}^{\infty} \frac{-1}{(m+1)^2} \sum_{n=1}^{2(m+1)^2} n \log \left( 1+\frac{1}{n(n+2)} \right) . \end{split} \]
		Writing
		\[ n \log \left( 1+\frac{1}{n(n+2)} \right) = (n+1) \log (n+1) - n \log n + (n-1) \log (n+1) - n \log (n+2) , \]
		we observe that the sum over $n$ is telescoping, hence
		\[ \sum_{n=1}^{\infty} (R(n) - (\pi^2 /6)n) \log \left( 1+\frac{1}{n (n+2)} \right) = \sum_{m=1}^{\infty} \frac{-1}{m^2} \left( \log (2m^2 +1) - 2m^2 \log \frac{2m^2 +2}{2m^2 +1} \right) . \]
		This proves the form of the constant $\kappa'$ stated in \eqref{kappa'}.
	\end{proof}
	
	We will need a suitable form of the two-dimensional Esseen inequality, such as the following result of Sadikova \cite{SAD}. Let $F$ and $G$ be two cumulative distribution functions on $\mathbb{R}^2$ with corresponding characteristic functions $\phi$ and $\varphi$, and assume that
	\[ C_1 = \sup_{(x,y) \in \mathbb{R}^2} \frac{\partial G (x,y)}{\partial x} < \infty \quad \textrm{and} \quad C_2 = \sup_{(x,y) \in \mathbb{R}^2} \frac{\partial G (x,y)}{\partial y} < \infty . \]
	Then for any $T>0$,
	\begin{equation}\label{sadikova}
		\begin{split} \sup_{(x,y) \in \mathbb{R}^2} &|F(x,y) - G(x,y)| \\ &\le \frac{1}{2 \pi^2} \int_{-T}^T \int_{-T}^T \left| \frac{\phi (t_1, t_2) - \phi (t_1,0) \phi (0,t_2) - \varphi (t_1,t_2) + \varphi (t_1,0) \varphi (0,t_2)}{t_1 t_2} \right| \, \mathrm{d}t_1 \mathrm{d}t_2 \\ &\phantom{\le{}}+2 \sup_{x \in \mathbb{R}} |F(x,\infty) - G(x,\infty)| + 2 \sup_{y \in \mathbb{R}} |F(\infty,y) - G(\infty,y)| + (6 \sqrt{2} + 8 \sqrt{3}) \frac{C_1+C_2}{T} . \end{split}
	\end{equation}
	
	\begin{lem}\label{sumRaklemma} For any even integer $K \ge 4$, let
		\[ X_K = \frac{1}{\frac{\pi^3}{24 \log 2}K} \Bigg( \sum_{\substack{k=1 \\ k \textrm{ odd}}}^K R(a_k) - A_K \Bigg) \quad \textrm{and} \quad Y_K = \frac{1}{\frac{\pi^3}{24 \log 2}K} \Bigg( \sum_{\substack{k=1 \\ k \textrm{ even}}}^K R(a_k) - A_K \Bigg) \]
		with $A_K = \frac{\pi^2}{12\log 2} K \log K - \kappa'' K$ and the constant
		\begin{equation}\label{kappa''}
			\kappa'' = \frac{\pi^2}{12 \log 2} \left( \gamma + \log \frac{4 \log 2}{\pi} \right) + \frac{1}{2 \log 2} \sum_{m=1}^{\infty} \left( \frac{\log (2m^2 +1)}{m^2} - 2 \log \frac{2m^2 +2}{2m^2 +1} \right) .
		\end{equation}
		Then $(X_K, Y_K) \overset{d}{\to} \mathrm{Stab}(1,1) \otimes \mathrm{Stab}(1,1)$ with rate $\ll K^{-1/2} \log K$ in the Kolmogorov metric, and
		\begin{equation}\label{XkplusminusYk}
			\frac{X_K+Y_K}{2} - \frac{2 \log 2}{\pi} \overset{d}{\to} \mathrm{Stab}(1,1) \quad \textrm{and} \quad \frac{X_K-Y_K}{2} \overset{d}{\to} \mathrm{Cauchy}
		\end{equation}
		with rate $\ll K^{-1/2}$ in the Kolmogorov metric.
	\end{lem}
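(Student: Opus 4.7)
The plan is to follow the Heinrich-type characteristic function approach cited after \eqref{sumaklimitlaw}, adapted to the joint distribution via Sadikova's two-dimensional Esseen inequality \eqref{sadikova}. Write $c_K = \frac{\pi^3}{24 \log 2} K$ and set $s' = s/c_K$, $t' = t/c_K$. The joint characteristic function of $(X_K, Y_K)$ is
\[ \phi(s,t) = e^{-i(s+t) A_K / c_K} \, \mathbb{E}_\mu \exp\!\Bigl( i \sum_{j=1}^{K/2} \bigl( s' R(a_{2j-1}) + t' R(a_{2j}) \bigr) \Bigr), \]
so that pairing consecutive partial quotients $(a_{2j-1}, a_{2j})$ places exactly one odd- and one even-indexed term in each block, which will let Lemma \ref{gausstransformationlemma} be applied block by block.

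The key estimate is a near-factorization of the block expectations. First, by the exponential $\psi$-mixing rate $\psi(\ell) \ll e^{-a\ell}$ from Section \ref{mixingsection}, I would insert small gaps between groups of blocks (or equivalently run a telescoping argument exchanging conditional expectations) to show
\[ \mathbb{E}_\mu \prod_{j=1}^{K/2} e^{i(s' R(a_{2j-1}) + t' R(a_{2j}))} = \prod_{j=1}^{K/2} \mathbb{E}_{\mu_{\mathrm{Gauss}}} e^{i(s' R(a_{2j-1}) + t' R(a_{2j}))} + (\text{small error}), \]
where the Gauss-Kuzmin estimate \eqref{gausskuzmin} is used to replace $\mu$ by $\mu_{\mathrm{Gauss}}$ on the outermost block (strict stationarity under $\mu_{\mathrm{Gauss}}$ then makes each factor equal). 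Lemma \ref{gausstransformationlemma} supplies the expansion
\[ \mathbb{E}_{\mu_{\mathrm{Gauss}}} e^{i(s' R(a_1) + t' R(a_2))} = 1 + w(s') + w(t') + O(|s'|^{3/2} + |t'|^{3/2}), \]
with $w(u) = -\tfrac{\pi^3}{12\log 2}|u| - i \tfrac{\pi^2}{6 \log 2} u \log|u| - i \kappa' u$. Since $|s'|, |t'| \ll 1/K$ on the relevant range, these quantities are small, so $\log(1 + \cdot) \approx (\cdot)$, and raising to the $K/2$ power produces
\[ \tfrac{K}{2}(w(s') + w(t')) = -|s| - i\tfrac{2}{\pi} s \log|s| - |t| - i\tfrac{2}{\pi} t \log|t| + i\tfrac{2}{\pi}(s+t)\log c_K + O(K^{-1/2}(|s|^{3/2}+|t|^{3/2})) + \ldots, \]
plus a linear-in-$s,t$ remainder from the $\kappa'$ term. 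Combining with the deterministic factor $e^{-i(s+t)A_K/c_K}$, the choice of $\kappa''$ in \eqref{kappa''} is precisely what cancels the residual affine terms $i \alpha (s+t)$; the limiting characteristic function is exactly that of $\mathrm{Stab}(1,1) \otimes \mathrm{Stab}(1,1)$.

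To upgrade qualitative convergence to the rate, I would apply Sadikova's inequality \eqref{sadikova} with truncation $T \asymp K^{1/2} / \log K$: the densities of $\mathrm{Stab}(1,1)$ are bounded, so $(C_1+C_2)/T \ll T^{-1}$; the one-dimensional remainders $\sup_x|F(x,\infty) - G(x,\infty)|$ are handled by the one-dimensional Esseen inequality applied to $X_K$ and $Y_K$ separately, which is what yields the sharper rate $K^{-1/2}$ in \eqref{XkplusminusYk} (since only one CF needs to be analyzed and no two-variable truncation is required); the double integral is bounded by integrating the error $|\phi(s,t) - \phi(s,0)\phi(0,t) - \varphi(s,t) + \varphi(s,0)\varphi(0,t)|/|st|$, whose main contribution comes from the $O(K^{-1/2}(|s|^{3/2}+|t|^{3/2}))$ cubic-root error in the CF expansion and from the $\psi$-mixing remainder.

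The main obstacle I anticipate is the mixing step: simply multiplying $\psi(\ell) \ll e^{-a\ell}$ over $K/2$ adjacent blocks gives nothing, so a careful blocking-with-spacers argument (or an iterated conditional-expectation telescoping as in Heinrich's work) is needed to control the difference between the true expectation and the product of pair expectations by a quantity that is $O(K \cdot e^{-a \ell})$ for a slowly growing spacer $\ell$, while the loss from discarded spacer blocks is $O(\ell/K)$; balancing these (together with the cubic-root CF error) fixes the final rate $K^{-1/2}\log K$. The rest is bookkeeping: verifying that all the $\kappa, \kappa', \kappa''$ identities align so that the spurious linear-in-$(s,t)$ terms really do cancel, and that the Esseen integration domain $|s|, |t| \le T$ is compatible with the expansion's validity range $|s'|, |t'| \ll 1/K$.
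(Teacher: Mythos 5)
Your high-level strategy matches the paper's: pair the partial quotients into consecutive blocks $(a_{2j-1}, a_{2j})$, expand the joint characteristic function using Lemma \ref{gausstransformationlemma}, and then combine a one-dimensional Esseen bound for the marginals with Sadikova's inequality \eqref{sadikova} for the joint law. Two remarks, one minor and one substantive.

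\emph{Minor.} The ``near-factorization'' step you flag as the main obstacle is, in the paper, dispatched in one stroke by citing Heinrich's \cite[Lemma~1]{HE}, which compares $\mathbb{E}_{\mu}\exp\bigl(i\sum_k Z_k\bigr)$ directly with $\exp\bigl(\sum_k\mathbb{E}_{\mu}(e^{iZ_k}-1)\bigr)$ for exponentially $\psi$-mixing random variables, with an explicit error. You would be re-deriving that lemma from scratch; there is nothing wrong with that in principle, but your sketch of ``spacers plus discarded blocks, giving $O(Ke^{-a\ell})+O(\ell/K)$'' would need to be done multiplicatively at the level of the exponential (not additively on the sum), and the rate does not actually come from balancing these two terms against each other --- it comes from the $1/T$ term in Sadikova's inequality and from the $O((|t_1|^{3/2}+|t_2|^{3/2})/K^{1/2})$ remainder in Lemma \ref{gausstransformationlemma}, integrated over $|t_1|,|t_2|\le K^{1/2}$ with the exponential decay $e^{-a(|t_1|+|t_2|)}$. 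So your diagnosis of where the exponent comes from is off, even if the final answer is right.

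\emph{Substantive gap.} You never address the integrability of the Sadikova integrand near the coordinate axes. The estimate
\[ |\phi(t_1,t_2)-\varphi(t_1)\varphi(t_2)| \ll \Bigl(\tfrac{|t_1|^{3/2}+|t_2|^{3/2}}{K^{1/2}}+\cdots\Bigr)e^{-a(|t_1|+|t_2|)}+\cdots \]
does not by itself make
\[ \int_{-T}^{T}\!\int_{-T}^{T} \frac{|\phi(t_1,t_2)-\phi(t_1,0)\phi(0,t_2)-\varphi(t_1)\varphi(t_2)+\varphi(t_1,0)\varphi(0,t_2)|}{|t_1 t_2|}\,\mathrm{d}t_1\,\mathrm{d}t_2 \]
converge: for fixed small $|t_1|$, dividing $|t_1|^{3/2}$ by $|t_1 t_2|$ leaves a nonintegrable $1/|t_2|$ singularity near $t_2=0$ (and symmetrically). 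The numerator must be shown to vanish at the correct joint rate as $t_1\to 0$ or $t_2\to 0$. The paper handles this with a separate set of bounds, \eqref{charfunctionsmallt1}, \eqref{charfunctionsmallt2} and \eqref{charfunctionsmallt1smallt2}, valid on the regions $|t_1|\le 1/K$ and/or $|t_2|\le 1/K$; these are proved not by the Gauss--Kuzmin expansion but by the tail estimate for $\sum_k a_k$ (Lemma \ref{sumaktailslemma}), giving $\mathbb{E}_{\mu}|e^{it_1 X_K}-1|\ll|t_1|\log(1/|t_1|)$ and $\mathbb{E}_{\mu}|e^{it_1X_K}-1|\cdot|e^{it_2Y_K}-1|\ll|t_1t_2|^{1/2}$. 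Without an ingredient of this type, the double integral does not close and the rate $K^{-1/2}\log K$ cannot be extracted. This is a genuine missing step in your proposal.

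Everything else --- the identification of $w(t)$, the affine cancellation via the choice of $\kappa''$, the cleaner $K^{-1/2}$ rate for the one-dimensional marginals $(X_K\pm Y_K)/2$ via the scalar Esseen inequality, and the use of Lemma \ref{gausstransformationlemma} as the engine --- is aligned with what the paper does.
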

	
	\begin{proof} Let $\phi (t_1,t_2) = \mathbb{E}_{\mu} e^{i t_1 X_K + i t_2 Y_K}$ be the characteristic function of the random vector $(X_K, Y_K)$, and let $\varphi (t) = \exp (-|t| (1+\frac{2i}{\pi} \mathrm{sgn} (t) \log |t|))$ be the characteristic function of $\mathrm{Stab}(1,1)$. We start by estimating $\phi (t_1, t_2)$ in the range $|t_1|, |t_2| \le K/(\log K)^3$.
		
		The random variables
		\[ Z_k = Z_{k,K} = \frac{t_1}{\frac{\pi^3}{24 \log 2}K} R(a_{2k-1}) +\frac{t_2}{\frac{\pi^3}{24 \log 2}K} R(a_{2k}), \quad 1 \le k \le K/2 \]
		are $\psi$-mixing with exponential rate. Observe that with a suitable constant $C>0$,
		\[ \begin{split} \mathbb{E}_{\mu} (1-\cos Z_k) &\ge \mu \left( \frac{\pi}{2} \le |Z_k| \le \frac{3 \pi}{2} \right) \\ &\gg \mu_{\mathrm{Gauss}} \left( \frac{CK}{|t_1|} \le a_1 \le \frac{2CK}{|t_1|}, \,\, a_2=1 \right) + \mu_{\mathrm{Gauss}} \left( a_1=1, \,\, \frac{CK}{|t_2|} \le a_2 \le \frac{2CK}{|t_2|} \right) \\ &\gg \frac{|t_1| + |t_2|}{K} . \end{split} \]
		Using $R(x) \ll x$ and $\mu (a_k=n) \ll \mu_{\mathrm{Gauss}} (a_1 =n) \ll 1/n^2$, we also have
		\[ \begin{split} \mathbb{E}_{\mu} |e^{iZ_k} -1| &\ll \mathbb{E}_{\mu} \min \left\{ \frac{|t_1|}{K} R(a_{2k-1}), 1  \right\} + \mathbb{E}_{\mu} \min \left\{ \frac{|t_2|}{K} R(a_{2k}), 1  \right\} \\ &\ll \sum_{1 \le n \le K/|t_1|} \frac{|t_1|}{K}n \cdot \frac{1}{n^2} + \sum_{n>K/|t_1|} \frac{1}{n^2} + \sum_{1 \le n \le K/|t_2|} \frac{|t_2|}{K}n \cdot \frac{1}{n^2} + \sum_{n>K/|t_2|} \frac{1}{n^2} \\ &\ll \frac{|t_1|}{K} \log \frac{K}{|t_1|} + \frac{|t_2|}{K} \log \frac{K}{|t_2|} . \end{split} \]
		An application of \cite[Lemma 1]{HE} with $m=P \approx (\frac{|t_1|}{K} \log \frac{K}{|t_1|} + \frac{|t_2|}{K} \log \frac{K}{|t_2|})^{-1/2} \gg (\log K)^{3/2} (\log \log K)^{-1/2}$ thus gives
		\begin{equation}\label{Zk}
			\begin{split} \Bigg| \mathbb{E}_{\mu} \exp &\left( i \sum_{k=1}^{K/2} Z_k \right) - \exp \left( \sum_{k=1}^{K/2} \mathbb{E}_{\mu} (e^{iZ_k} -1) \right) \Bigg| \\ &\ll K \left( \frac{|t_1|}{K} \log \frac{K}{|t_1|} + \frac{|t_2|}{K} \log \frac{K}{|t_2|} \right)^2 e^{-a(|t_1|+|t_2|)} + e^{-am} K \left( \frac{|t_1|}{K} \log \frac{K}{|t_1|} + \frac{|t_2|}{K} \log \frac{K}{|t_2|} \right) \\ &\ll \frac{|t_1|^{3/2} + |t_2|^{3/2}}{K^{1/2}}e^{-a(|t_1|+|t_2|)} +\frac{|t_1| (1+ |\log |t_1||) + |t_2| (1+ |\log |t_2||)}{ K^{100}}. \end{split}
		\end{equation}
		In the second step we used the fact that the assumption $|t_1|, |t_2| \le K/(\log K)^3$ ensures that
		\[ \left( \frac{|t_1|}{K} \log \frac{K}{|t_1|} + \frac{|t_2|}{K} \log \frac{K}{|t_2|}  \right)^2 \ll \frac{|t_1|^{3/2} + |t_2|^{3/2}}{K^{3/2}} . \]
		Since $Z_k$ is $\mathcal{A}_{2k-1}^{\infty}$-measurable, by the Gauss--Kuzmin theorem \eqref{gausskuzmin},
		\[ \left| \mathbb{E}_{\mu} (e^{i Z_k}-1) - \mathbb{E}_{\mu_{\mathrm{Gauss}}} (e^{i Z_k}-1) \right| \ll e^{-ak} \mathbb{E}_{\mu} |e^{iZ_k}-1| \ll e^{-ak} \left( \frac{|t_1|}{K} \log \frac{K}{|t_1|} + \frac{|t_2|}{K} \log \frac{K}{|t_2|} \right) . \]
		The variables $Z_k$, $1 \le k \le K/2$ are identically distributed under $\mu_{\mathrm{Gauss}}$, and thus we can apply Lemma \ref{gausstransformationlemma} to deduce
		\[ \begin{split} \mathbb{E}_{\mu_{\mathrm{Gauss}}} (e^{i Z_k}-1) &= -\frac{2 |t_1|}{K} - \frac{4i}{\pi} \cdot \frac{t_1 \log |t_1|}{K} + i \frac{A_K}{\frac{\pi^3}{48 \log 2}K^2} t_1 -\frac{2 |t_2|}{K} - \frac{4i}{\pi} \cdot \frac{t_2 \log |t_2|}{K} + i \frac{A_K}{\frac{\pi^3}{48 \log 2}K^2} t_2 \\ &\phantom{={}}+O \left( \frac{|t_1|^{3/2} + |t_2|^{3/2}}{K^{3/2}} \right) . \end{split} \]
		By the previous two formulas,
		\[ \begin{split} \exp \left( \sum_{k=1}^{K/2} \mathbb{E}_{\mu} (e^{iZ_k} -1) \right) &= \varphi (t_1) \varphi (t_2) \exp \left( i \frac{A_K}{\frac{\pi^3}{24 \log 2}K} (t_1+t_2) \right) \\ &\phantom{={}} \times \left( 1+ O \left( \frac{|t_1|^{3/2} + |t_2|^{3/2}}{K^{1/2}} + \frac{|t_1|}{K} \log \frac{K}{|t_1|} + \frac{|t_2|}{K} \log \frac{K}{|t_2|} \right) \right) . \end{split} \]
		The previous formula together with \eqref{Zk} and the fact that $|\varphi (t_1) \varphi (t_2)| = \exp (-(|t_1|+|t_2|))$ leads to the estimate
		\begin{equation}\label{charfunctionestimate}
			\begin{split} \left| \phi (t_1, t_2) -\varphi (t_1) \varphi (t_2) \right| &\ll \left( \frac{|t_1|^{3/2} + |t_2|^{3/2}}{K^{1/2}} + \frac{|t_1|}{K} \log \frac{K}{|t_1|} + \frac{|t_2|}{K} \log \frac{K}{|t_2|} \right) e^{-a(|t_1|+|t_2|)} \\ &\phantom{\ll{}}+ \frac{|t_1| (1+ |\log |t_1||) + |t_2| (1+ |\log |t_2||)}{K^{100}} \end{split}
		\end{equation}
		uniformly in $|t_1|, |t_2| \le K/(\log K)^3$.
		
		Assuming $|t_1| \le 1/K$, we have
		\[ |e^{it_1 X_K} -1| \le \min \left\{ |t_1 X_K|, 2 \right\} \ll |t_1| \log K + \min \left\{ \frac{|t_1|}{K} \sum_{k=1}^K a_k, 1 \right\} . \]
		An application of Lemma \ref{sumaktailslemma} yields
		\[ \mathbb{E}_{\mu} |e^{it_1 X_K}-1| \ll |t_1| \log K + \int_0^1 \mu \left( \frac{|t_1|}{K} \sum_{k=1}^K a_k \ge x \right) \, \mathrm{d}x \ll |t_1| \log K + \int_{2 |t_1| \log K}^1 \frac{|t_1|}{x} \, \mathrm{d}x \ll |t_1| \log \frac{1}{|t_1|} , \]
		hence
		\begin{equation}\label{charfunctionsmallt1}
			|\phi (t_1, t_2)-\phi (0,t_2)| \le \mathbb{E}_{\mu} |e^{it_1 X_K}-1| \ll |t_1| \log \frac{1}{|t_1|} \quad \textrm{uniformly in } |t_1| \le 1/K .
		\end{equation}
		The same argument shows that
		\begin{equation}\label{charfunctionsmallt2}
			|\phi (t_1, t_2)-\phi (t_1,0)| \le \mathbb{E}_{\mu} |e^{it_2 Y_K}-1| \ll |t_2| \log \frac{1}{|t_2|} \quad \textrm{uniformly in } |t_2| \le 1/K .
		\end{equation}
		
		Finally, assume that $|t_1|, |t_2| \le 1/K$. Then
		\[ |e^{it_1 X_K} -1| \cdot |e^{it_2 Y_K}-1| \ll \left( |t_1| \log K + \min \left\{ \frac{|t_1|}{K} \sum_{k=1}^K a_k, 1 \right\} \right) \left( |t_2| \log K + \min \left\{ \frac{|t_2|}{K} \sum_{k=1}^K a_k, 1 \right\} \right) . \]
		By Lemma \ref{sumaktailslemma}, here
		\[ \begin{split} \mathbb{E}_{\mu} \left( \min \left\{ \frac{|t_1|}{K} \sum_{k=1}^K a_k, 1 \right\} \cdot \min \left\{ \frac{|t_2|}{K} \sum_{k=1}^K a_k, 1 \right\} \right) &\le \int_0^1 \mu \left( \frac{|t_1|}{K} \sum_{k=1}^K a_k \cdot \frac{|t_2|}{K} \sum_{k=1}^K a_k \ge x \right) \, \mathrm{d}x \\ &\ll |t_1 t_2| (\log K)^2 + \int_{4 |t_1 t_2| (\log K)^2}^{1} \frac{|t_1 t_2|^{1/2}}{x^{1/2}} \, \mathrm{d}x \\ &\ll |t_1 t_2|^{1/2}, \end{split} \]
		hence
		\begin{equation}\label{charfunctionsmallt1smallt2}
			|\phi (t_1, t_2) - \phi (t_1,0) - \phi (0,t_2) +1| \le \mathbb{E}_{\mu} \left( |e^{it_1 X_K} -1| \cdot |e^{it_2 Y_K}-1| \right) \ll |t_1 t_2|^{1/2}
		\end{equation}
		uniformly in $|t_1|, |t_2| \le 1/K$.
		
		We are now ready to prove the desired limit laws with rate. Let $F(x,y) = \mu (X_K \le x, \, Y_K \le y)$ denote the cumulative distribution function of $(X_K, Y_K)$, and let $G$ be the cumulative distribution function of $\mathrm{Stab}(1,1)$. In particular, $\mathrm{Stab}(1,1) \otimes \mathrm{Stab}(1,1)$ has characteristic function $\varphi (t_1) \varphi (t_2)$ and cumulative distribution function $G(x) G(y)$.
		
		We first show that $X_K \overset{d}{\to} \mathrm{Stab}(1,1)$ and $Y_K \overset{d}{\to} \mathrm{Stab}(1,1)$ with rate $\ll K^{-1/2}$. Choosing $T=K^{1/2}$ in the classical (one-dimensional) Esseen inequality \cite[p.\ 142]{PE} leads to
		\[ \sup_{x \in \mathbb{R}} |F(x,\infty) - G(x)| \ll \int_{-K^{1/2}}^{K^{1/2}} \left| \frac{\phi (t,0) - \varphi (t)}{t} \right| \, \mathrm{d}t + \frac{1}{K^{1/2}} . \]
		Formula \eqref{charfunctionestimate} with $t_2=0$ gives
		\[ |\phi (t,0) - \varphi (t)| \ll \left( \frac{|t|^{3/2}}{K^{1/2}} + \frac{|t|}{K} \log \frac{K}{|t|} \right) e^{-a |t|} + \frac{|t| (1+|\log |t||)}{K^{100}},  \]
		hence
		\[ \int_{-K^{1/2}}^{K^{1/2}} \left| \frac{\phi (t,0) - \varphi (t)}{t} \right| \, \mathrm{d}t \ll \frac{1}{K^{1/2}}. \]
		Therefore $\sup_{x \in \mathbb{R}} |F(x,\infty) - G(x)| \ll K^{-1/2}$. A similar argument gives $\sup_{y \in \mathbb{R}} |F(\infty, y) - G(y)|\ll K^{-1/2}$.
		
		We now prove that $(X_K,Y_K) \overset{d}{\to} \mathrm{Stab}(1,1) \otimes \mathrm{Stab}(1,1)$ with rate $K^{-1/2} \log K$. The inequality of Sadikova \eqref{sadikova} with $T=K^{1/2}$ together with the one-dimensional rates established above lead to
		\[ \sup_{(x,y) \in \mathbb{R}^2} |F(x,y) - G(x)G(y)| \ll \int_{-K^{1/2}}^{K^{1/2}} \int_{-K^{1/2}}^{K^{1/2}} \left| \frac{\phi (t_1,t_2) - \phi (t_1,0) \phi (0,t_2)}{t_1 t_2} \right| \, \mathrm{d}t_1 \mathrm{d}t_2 + \frac{1}{K^{1/2}} . \]
		Formulas \eqref{charfunctionsmallt1}--\eqref{charfunctionsmallt1smallt2} show that in the range $|t_1|, |t_2| \le 1/K$ we have
		\[ |\phi (t_1, t_2) - \phi(t_1,0) \phi (0,t_2)| \le |\phi(t_1,t_2) - \phi(t_1,0) - \phi(0,t_2)+1| + |\phi(t_1,0) -1| \cdot |1-\phi(0,t_2)| \ll |t_1 t_2|^{1/2}, \]
		hence
		\[ \int_{-1/K}^{1/K} \int_{-1/K}^{1/K} \left| \frac{\phi (t_1,t_2) - \phi (t_1,0) \phi (0,t_2)}{t_1 t_2} \right| \, \mathrm{d}t_1 \mathrm{d}t_2 \ll \frac{1}{K} . \]
		Formula \eqref{charfunctionsmallt1} shows that in the range $|t_1| \le 1/K$ we have
		\[ |\phi (t_1, t_2) - \phi(t_1,0) \phi (0,t_2)| \le |\phi (t_1, t_2) - \phi (0,t_2)| + |\phi (0,t_2)| \cdot |1-\phi (t_1,0)| \ll |t_1| \log \frac{1}{|t_1|}, \]
		hence
		\[ \int_{1/K}^{K^{1/2}} \int_{-1/K}^{1/K} \left| \frac{\phi (t_1,t_2) - \phi (t_1,0) \phi (0,t_2)}{t_1 t_2} \right| \, \mathrm{d}t_1 \mathrm{d}t_2 \ll \frac{(\log K)^2}{K} . \]
		The integral on the rectangles $[-1/K, 1/K] \times [-K^{1/2}, -1/K]$, $[1/K,K^{1/2}] \times [-1/K, 1/K]$ and $[-K^{1/2}, -1/K] \times [-1/K,1/K]$ are also $\ll (\log K)^2 /K$. Finally, formula \eqref{charfunctionestimate} shows that
		\[ \begin{split} |\phi (t_1, t_2) - &\phi (t_1,0) \phi (0,t_2)| \\ &\le |\phi (t_1, t_2) - \varphi (t_1) \varphi (t_2)| + |\phi (t_1, 0)| \cdot |\phi (0,t_2)- \varphi (t_2)| + |\varphi(t_2)| \cdot |\phi (t_1,0) - \varphi (t_1)| \\ &\ll \left( \frac{|t_1|^{3/2} + |t_2|^{3/2}}{K^{1/2}} + \frac{|t_1|}{K} \log \frac{K}{|t_1|} + \frac{|t_2|}{K} \log \frac{K}{|t_2|} \right) e^{-a(|t_1|+|t_2|)} \\ &\phantom{\ll{}}+ \frac{|t_1| (1+|\log |t_1||) + |t_2| (1+|\log |t_2||)}{K^{100}} , \end{split} \]
		hence
		\[ \int_{1/K}^{K^{1/2}} \int_{1/K}^{K^{1/2}} \left| \frac{\phi (t_1,t_2) - \phi (t_1,0) \phi (0,t_2)}{t_1 t_2} \right| \, \mathrm{d}t_1 \mathrm{d}t_2 \ll \frac{\log K}{K^{1/2}}. \]
		The integral on the other three similar squares with $1/K \le |t_1|, |t_2| \le K^{1/2}$ are also $\ll K^{-1/2} \log K$. Therefore $\sup_{(x,y) \in \mathbb{R}^2} |F(x,y) - G(x)G(y)| \ll K^{-1/2} \log K$, as claimed.
		
		Observe that $(X_K+Y_K)/2 - (2 \log 2)/\pi$ has characteristic function $\phi (t/2,t/2)\exp (-it (2 \log 2)/\pi)$, and that $\varphi (t/2)^2 \exp (-it (2 \log 2)/\pi) = \varphi (t)$. Formula \eqref{charfunctionestimate} yields a suitable upper estimate for $|\phi (t/2,t/2)-\varphi(t/2)^2|$, and an application of the one-dimensional Esseen inequality shows that $(X_K+Y_K)/2 - (2 \log 2)/\pi \overset{d}{\to} \mathrm{Stab}(1,1)$ with rate $K^{-1/2}$, as claimed.
		
		Similarly, $(X_K - Y_K)/2$ has characteristic function $\phi (t/2, -t/2)$, and $\varphi(t/2)\varphi(-t/2) = \exp (-|t|)$ is the characteristic function of the standard Cauchy distribution. Formula \eqref{charfunctionestimate} yields a suitable upper estimate for $|\phi (t/2,-t/2)-\varphi(t/2) \varphi (-t/2)|$, and an application of the one-dimensional Esseen inequality shows that $(X_K-Y_K)/2 \overset{d}{\to} \mathrm{Cauchy}$ with rate $K^{-1/2}$, as claimed.
	\end{proof}
	
	\subsection{Completing the proof}\label{completesection}
	
	In this section, we compute the contribution of a function of bounded variation $g$, and then give the proof of Theorem \ref{maintheorem}.
	\begin{lem}\label{bvlemma} Let $g$ be a $1$-periodic function that is of bounded variation on $[0,1]$. Then
		\[ \mu \left( \left| \frac{1}{\log N} \sum_{n=1}^N \frac{g(n \alpha)}{n} - \int_0^1 g(x) \, \mathrm{d}x \right| \ge \frac{1}{(\log N)^{1/2}} \right) \ll \frac{1}{(\log N)^{1/2}} \]
		with an implied constant depending only on $L$, $A$ and $g$.
	\end{lem}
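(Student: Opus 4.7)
The plan is to reduce to the mean-zero case, then control the remaining sum via a uniform $L^2$ estimate followed by Markov's inequality. First, I would set $\bar{g} = \int_0^1 g(x) \, \mathrm{d}x$ and $g_0 = g - \bar{g}$, so that $g_0$ is $1$-periodic, of bounded variation on $[0,1]$ (with the same total variation as $g$), and has zero mean. Since $\sum_{n=1}^N 1/n = \log N + O(1)$, we have
\[ \sum_{n=1}^N \frac{g(n\alpha)}{n} = (\log N + O(1)) \bar{g} + \sum_{n=1}^N \frac{g_0(n\alpha)}{n} , \]
so the task reduces to showing that $|\sum_{n=1}^N g_0(n\alpha)/n| \le (\log N)^{1/2}$ except on a set of $\mu$-measure $\ll (\log N)^{-1/2}$.

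The core step is a second-moment bound under Lebesgue measure. I would expand $g_0(x) = \sum_{k \ne 0} c_k e^{2\pi i k x}$; Stieltjes integration by parts, together with the bounded variation of $g_0$, gives $|c_k| \ll 1/|k|$ with an implied constant depending only on $g$. Orthogonality then yields
\[ \mathbb{E}_\lambda \Bigl| \sum_{n=1}^N \frac{g_0(n\alpha)}{n} \Bigr|^2 = \sum_{n,m=1}^N \frac{1}{nm} \sum_{\substack{k,l \ne 0 \\ kn = lm}} c_k \overline{c_l} . \]
For fixed $n,m$, writing $d = \gcd(n,m)$, $n = dn'$, $m = dm'$ with $\gcd(n',m')=1$, the nonzero integer solutions of $kn=lm$ are exactly $k = m' t$, $l = n' t$, $t \ne 0$. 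The inner sum is therefore $\ll \sum_{t \ne 0} 1/(m' n' t^2) \ll d^2/(nm)$, and regrouping by $d$ gives
\[ \mathbb{E}_\lambda \Bigl| \sum_{n=1}^N \frac{g_0(n\alpha)}{n} \Bigr|^2 \ll \sum_{d \ge 1} \frac{1}{d^2} \Bigl( \sum_{n' \le N/d} \frac{1}{n'^2} \Bigr)^2 \ll 1 , \]
uniformly in $N$.

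To transfer the estimate to $\mu$, I would use that under \eqref{lipschitz} the density $\mathrm{d}\mu/\mathrm{d}\lambda$ is bounded above by $1+L$: Lipschitz-ness on $[0,1]$ forces $\sup h \le \inf h + L \le \int h + L = 1 + L$. Hence $\mathbb{E}_\mu |\cdot|^2 \ll 1$ as well, and Markov's inequality applied to this second moment with threshold $\asymp (\log N)^{1/2}$ yields a probability bound of $\ll 1/\log N$, which is much stronger than the required $\ll (\log N)^{-1/2}$; combining with the $O(1) \cdot \bar{g}$ contribution from the first display completes the argument.

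I expect the main technical point to be the combinatorial parametrisation of the diagonal set $\{(k,l) : kn = lm, \, k,l \ne 0\}$ by the gcd of $n$ and $m$ in step two; the bound $d^2/(nm)$ on the inner sum relies crucially on the decay $|c_k| \ll 1/|k|$ of Fourier coefficients of a BV function. Everything else — Stieltjes integration by parts, boundedness of Lipschitz densities on $[0,1]$, and Markov's inequality — is routine, and the considerable slack between the obtained rate $(\log N)^{-1}$ and the required rate $(\log N)^{-1/2}$ shows that the argument is quite robust.
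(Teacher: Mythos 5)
Your proof is correct and takes a genuinely different route from the paper's. The paper proves this lemma by combining Koksma's inequality with the discrepancy bound $D_n(\alpha)\le 2(a_1+\cdots+a_{k+1})$ for $q_k\le n<q_{k+1}$, then summing by parts to reduce everything to a tail estimate on partial quotients (Lemma~\ref{sumaktailslemma}) together with the control on $K_N^*$ from Lemma~\ref{KNlemma}; in particular it leans on the continued-fraction machinery that is already in place elsewhere in the paper, and it produces the rate $(\log N)^{-1/2}$ because that is what Lemma~\ref{KNlemma} supplies. Your argument replaces all of this with a self-contained Fourier-analytic second-moment bound: the Fourier coefficients of a mean-zero BV function satisfy $|c_k|\ll 1/|k|$, the diagonal $\{kn=lm\}$ is parametrised via $d=\gcd(n,m)$ exactly as you say, and the resulting bound $\mathbb{E}_\lambda\bigl|\sum_{n\le N}g_0(n\alpha)/n\bigr|^2\ll\sum_d d^{-2}\zeta(2)^2\ll 1$ is uniform in $N$; transferring to $\mu$ via $\sup(\mathrm{d}\mu/\mathrm{d}\lambda)\le 1+L$ and applying Chebyshev then gives $\ll 1/\log N$, which is even stronger than the $(\log N)^{-1/2}$ required. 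The trade-off is that your method is more elementary and yields a better rate for this particular lemma, but it uses input (Fourier coefficients of BV functions, a gcd decomposition reminiscent of Schmidt's argument from Lemma~\ref{schmidtlemma}) not otherwise exploited here, whereas the paper's proof is deliberately parallel in structure and tooling to the neighbouring Lemmas~\ref{Ruklemma}--\ref{uktoaklemma}. Both are valid; the only small point to flag in a written version is to say explicitly that $c_0=0$ because $g_0$ has zero mean, and that Parseval applies because $g_0\in L^\infty\subset L^2$, so that the orthogonality computation is legitimate.
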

	
	\begin{proof} Let
		\[ D_n(\alpha) = \sup_{I \subseteq [0,1]} \left| \sum_{\ell=1}^n \mathds{1}_{I}(\{ \ell \alpha \}) - n \lambda (I) \right| \]
		denote the discrepancy of the point set $\{ \ell \alpha \}$, $1 \le \ell \le n$, where the supremum is over all intervals $I \subseteq [0,1]$. Koksma's inequality \cite[p.\ 143]{KN} states that $|\sum_{\ell=1}^n g(\ell \alpha) - n \int_0^1 g(x) \, \mathrm{d}x |\le V(g;[0,1]) D_n(\alpha)$, where $V(g;[0,1])$ denotes the total variation of $g$ on $[0,1]$. Summation by parts yields
		\[ \sum_{n=1}^N \frac{g(n \alpha)}{n} = \sum_{n=1}^{N-1} \frac{\sum_{\ell=1}^n g(\ell \alpha)}{n(n+1)} + \frac{1}{N} \sum_{\ell=1}^N g(\ell \alpha) , \]
		thus
		\[ \left| \sum_{n=1}^N \frac{g(n \alpha)}{n} - \log N \int_0^1 g(x) \, \mathrm{d}x \right| \ll \sum_{n=1}^{N-1} \frac{D_n(\alpha)}{n^2} + \frac{D_N(\alpha)}{N} . \]
		A classical discrepancy estimate \cite[p.\ 126]{KN} states that for any $q_k \le n < q_{k+1}$, we have $D_n(\alpha) \le 2(a_1 + \cdots + a_{k+1})$. Hence
		\[ \left| \sum_{n=1}^N \frac{g(n \alpha)}{n} - \log N \int_0^1 g(x) \, \mathrm{d}x \right| \ll \sum_{k=0}^{K_N^*} \frac{a_1+\cdots +a_{k+1}}{q_k} , \]
		where $K_N^*=K_N^*(\alpha)$ is the random index for which $q_{K_N^*} \le N < q_{K_N^*+1}$. Since $\frac{12 \log 2}{\pi^2}<1$, Lemma \ref{KNlemma} shows that $K_N^*+1 \le \log N$ outside a set of $\mu$-measure $\ll (\log N)^{-1/2}$. Since $q_k \ge F_{k+1}$ for all $k \ge 0$, with $F_1=F_2=1$, $F_k=F_{k-1} + F_{k-2}$ denoting the sequence of Fibonacci numbers,
		\[ \mu \left( \left| \sum_{n=1}^N \frac{g(n \alpha)}{n} - \log N \int_0^1 g(x) \, \mathrm{d}x \right| \gg \sum_{1 \le k \le \log N} \frac{a_1+\cdots +a_k}{F_k} \right) \ll \frac{1}{(\log N)^{1/2}} . \]
		As $\sum_{k=1}^{\infty} F_k^{-1/2} < \infty$, the inequality $\sum_{1 \le k \le \log N} (a_1+\cdots +a_k)/F_k \gg (\log N)^{1/2}$ implies that $(a_1+\cdots +a_k)/F_k^{1/2} \gg (\log N)^{1/2}$ for some $k$. An application of Lemma \ref{sumaktailslemma} thus gives
		\[ \begin{split} \mu \left( \sum_{1 \le k \le \log N} \frac{a_1+\cdots +a_k}{F_k} \gg (\log N)^{1/2} \right) &\le \sum_{1 \le k \le \log N} \mu \left( \frac{a_1+\cdots +a_k}{F_k^{1/2}} \gg (\log N)^{1/2} \right) \\ &\ll \sum_{1 \le k \le \log N} \frac{k}{F_k^{1/2} (\log N)^{1/2}} \\ &\ll \frac{1}{(\log N)^{1/2}} , \end{split} \]
		and the claim follows.
	\end{proof}
	
	\begin{proof}[Proof of Theorem \ref{maintheorem}] Lemma \ref{lipschitzlemma} shows that it is enough to prove the desired limit law with rate under the assumption \eqref{lipschitz}. Lemma \ref{bvlemma} reduces the theorem to the special case $g_1=g_2=0$.
		
		Recall that $K_N$ is an even integer such that $|K_N - \frac{12 \log 2}{\pi^2} \log N| \le 1$. By Lemmas \ref{Ruklemma}, \ref{uktoaklemma} and \ref{sumRaklemma}, we have
		\[ \frac{1}{\sigma_N} \Bigg( \sum_{\substack{n=1 \\ 0< \langle n \alpha \rangle < 1/(2n)}}^N \frac{1}{n \langle n \alpha \rangle} - E_N, \sum_{\substack{n=1 \\ -1/(2n) < \langle n \alpha \rangle < 0}}^N \frac{1}{n |\langle n \alpha \rangle|} - E_N \Bigg) \overset{d}{\to} \mathrm{Stab}(1,1) \otimes \mathrm{Stab}(1,1) \]
		with rate $\ll (\log N)^{-1/4} (\log \log N)^{1/4}$ in the Kolmogorov metric, where $\sigma_N = \frac{\pi^3}{24 \log 2} K_N = \frac{\pi}{2} \log N + O(1)$ and
		\[ E_N = \frac{\kappa}{2} K_N + \frac{\pi^2}{12 \log 2} K_N \log K_N - \kappa'' K_N = \log N \log \log N - c\log N +O(\log \log N) \]
		with the constant
		\[ c = - \frac{6 \log 2}{\pi^2} \kappa - \log \frac{12 \log 2}{\pi^2} + \frac{12 \log 2}{\pi^2} \kappa'' = \gamma + \log \frac{2 \pi}{3} + \frac{12}{\pi^2} \sum_{m=1}^{\infty} \frac{\log m}{m^2} -1 . \]
		The remaining infinite series can be evaluated using the Dirichlet series $\zeta'(s) = - \sum_{m=1}^{\infty} m^{-s} \log m$, $\mathrm{Re} \, s >1$. Lemma \ref{awaylemma} yields the contribution of the terms with $\langle n \alpha \rangle \ge 1/(2n)$ resp.\ $\langle n \alpha \rangle \le -1/(2n)$, and the claim follows.
	\end{proof}
	
	The proof of Corollary \ref{maincorollary} is identical, based on the one-dimensional limit laws \eqref{XkplusminusYk} with rate from Lemma \ref{sumRaklemma}.
	
	\section{Proof of Theorem \ref{p>1theorem}}\label{p>1section}
	
	Throughout this section, $\alpha \sim \mu$ with some $\mu \ll \lambda$ that satisfies \eqref{lipschitz}, and $p>0$. All implied constants depend only on $L$, $A$ and $p$.
	
	We start with a generalization of Lemma \ref{sumaktailslemma} to power sums of partial quotients.
	\begin{lem}\label{sumakptailslemma} If $p>1$, then for any integers $M \ge 0$ and $K \ge 1$ and any real $t>0$,
		\[ \mu \left( \sum_{k=M+1}^{M+K} a_k^p \ge t K^p \right) \ll t^{-1/p} . \]
		If $1/2 \le p<1$, then for any integers $M \ge 0$ and $K \ge 1$ and any real $t>0$,
		\[ \mu \left( \left| \sum_{k=M+1}^{M+K} a_k^p - m_p K \right| \ge t K^p \right) \ll \left\{ \begin{array}{ll} t^{-1/p} & \textrm{if } 1/2<p<1, \\ t^{-2}(1+|\log (tK)|) & \textrm{if } p=1/2, \end{array} \right. \]
		where $m_p=\frac{1}{\log 2} \sum_{n=1}^{\infty} n^p \log \left( 1+\frac{1}{n(n+2)} \right)$.
	\end{lem}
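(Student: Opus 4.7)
I follow the strategy of Lemma \ref{sumaktailslemma}, extended by truncation and second-moment estimates, and addressing the new features for $p\neq 1$. First, the standard reduction: since $\mathrm{d}\mu/\mathrm{d}\mu_{\mathrm{Gauss}}$ is bounded in terms of $L$ and $A$, it suffices to establish all three bounds for $\mu=\mu_{\mathrm{Gauss}}$, and by strict stationarity under $\mu_{\mathrm{Gauss}}$ I may take $M=0$. Throughout I rely on the basic estimates $\mu_{\mathrm{Gauss}}(a_1\ge T)\asymp 1/T$ and $\mathbb{E}_{\mu_{\mathrm{Gauss}}}[a_1^q\mathds{1}_{\{a_1\le T\}}] \asymp \sum_{n\le T} n^{q-2}$, which equals $T^{q-1}$ for $q>1$, $\log T$ for $q=1$, and $O(1)$ for $q<1$; all of these follow from $\mu_{\mathrm{Gauss}}(a_1=n)\asymp 1/n^2$. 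I may also assume throughout that $t$ exceeds a suitable constant depending on $p$, as otherwise the asserted bound is trivial.

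In every case the common truncation level is $T\asymp t^{1/p}K$, so that a union bound yields
\[ \mu_{\mathrm{Gauss}}\bigl(\exists\, 1\le k\le K:\, a_k>T\bigr) \le K\mu_{\mathrm{Gauss}}(a_1>T)\ll K/T\asymp t^{-1/p}, \]
which is $\asymp t^{-2}$ in the boundary case $p=1/2$. Up to this probability it suffices to analyze the truncated sum $\sum_{k=1}^K a_k^p\mathds{1}_{\{a_k\le T\}}$. For $p>1$ no centering is needed: stationarity gives $\mathbb{E}_{\mu_{\mathrm{Gauss}}}\sum_k a_k^p\mathds{1}_{\{a_k\le T\}} \ll KT^{p-1} \asymp t^{-1/p}\cdot tK^p$, and Markov's inequality contributes another $O(t^{-1/p})$, finishing this case.

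For $1/2\le p<1$ the Gauss mean $m_p=\mathbb{E}_{\mu_{\mathrm{Gauss}}}[a_1^p]$ is finite, and I decompose
\[ \sum_{k=1}^K a_k^p - m_p K = S_K' + \sum_{k=1}^K a_k^p\mathds{1}_{\{a_k>T\}} - K\,\mathbb{E}_{\mu_{\mathrm{Gauss}}}[a_1^p\mathds{1}_{\{a_1>T\}}], \]
with $S_K'=\sum_{k=1}^K (a_k^p\mathds{1}_{\{a_k\le T\}}-\mathbb{E}_{\mu_{\mathrm{Gauss}}}[a_1^p\mathds{1}_{\{a_1\le T\}}])$. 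The deterministic shift is $\ll KT^{p-1}\asymp t^{1-1/p}K^p\le tK^p/4$ for $t$ large enough, and the non-truncated term vanishes off the union-bound event. Using the $L^1$ form $|\mathrm{Cov}_{\mu_{\mathrm{Gauss}}}(f(a_j),g(a_k))|\le \psi(|j-k|)\mathbb{E}|f|\mathbb{E}|g|$ of the $\psi$-mixing inequality together with $\sum_\ell\psi(\ell)<\infty$ and $\mathbb{E}[a_1^p\mathds{1}_{\{a_1\le T\}}]\ll 1$, one controls the off-diagonal covariances by $O(K)$ and arrives at $\mathrm{Var}_{\mu_{\mathrm{Gauss}}}(S_K')\ll K\,\mathbb{E}_{\mu_{\mathrm{Gauss}}}[a_1^{2p}\mathds{1}_{\{a_1\le T\}}]+K$, which by the moment estimate is $\ll KT^{2p-1}$ for $1/2<p<1$ and $\ll K\log T$ for $p=1/2$. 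Chebyshev's inequality with $T\asymp t^{1/p}K$ then gives $\mu_{\mathrm{Gauss}}(|S_K'|\ge tK^p/4)\ll t^{-1/p}$ in the first subcase and $\ll \log T/t^2\ll t^{-2}(1+|\log(tK)|)$ in the second, which combined with the union-bound tail finishes the proof.

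\emph{Main obstacle.} The most delicate point is the boundary case $p=1/2$: the truncated second moment $\mathbb{E}[a_1\mathds{1}_{\{a_1\le T\}}]$ is only logarithmic in $T$, which produces the factor $1+|\log(tK)|$ in the final bound and requires careful alignment of the truncation level with the tail of $\{\exists k:a_k>T\}$. A secondary technical point is to invoke the correct form of the $\psi$-mixing covariance inequality so that the off-diagonal contribution to $\mathrm{Var}(S_K')$ is absorbed into the main $KT^{2p-1}$ (resp.\ $K\log T$) term rather than overwhelming it.
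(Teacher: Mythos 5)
Your proof is correct and follows the same truncate-at-$T\asymp t^{1/p}K$, union-bound, and second-moment strategy as the paper. The only differences are cosmetic: for $p>1$ you apply Markov's inequality directly to the first moment of the truncated sum, which avoids the paper's variance computation and Chebyshev step; and for the off-diagonal covariances you invoke the $L^1$ form of the $\psi$-mixing inequality $|\mathrm{Cov}(f(a_j),g(a_k))|\le\psi(|j-k|)\,\mathbb{E}|f|\,\mathbb{E}|g|$, whereas the paper cites the $\rho$-mixing bound $|\mathrm{Cov}(X_j,X_k)|\ll e^{-a|j-k|}(\mathrm{Var}\,X_j)^{1/2}(\mathrm{Var}\,X_k)^{1/2}$ — both are valid consequences of exponentially fast $\psi$-mixing, and both yield the needed $\mathrm{Var}(S_K')\ll K\,\mathbb{E}[a_1^{2p}\mathds{1}_{\{a_1\le T\}}]$ (up to the harmless $+K$ term you carry). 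The endgame Chebyshev estimates and the $p=1/2$ logarithmic factor match the paper's.
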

	
	\begin{proof} We follow the approach of Diamond and Vaaler \cite{DV}. We may assume that $t$ is greater than any prescribed constant depending on $p$.
		
		An application of the union bound shows that
		\[ \mu_{\mathrm{Gauss}} \left( \max_{M+1 \le k \le M+K} a_k \ge t^{1/p} K \right) \le K \mu_{\mathrm{Gauss}} (a_1 \ge t^{1/p}K) \ll t^{-1/p} . \]
		In particular,
		\begin{equation}\label{indicator}
			\mu_{\mathrm{Gauss}} \left( \sum_{k=M+1}^{M+K} a_k^p \neq \sum_{k=M+1}^{M+K} a_k^p \mathds{1}_{\{ a_k \le t^{1/p} K \}} \right) \ll t^{-1/p} .
		\end{equation}
		
		Assume first that $p>1$. By exponentially fast mixing (cf.\ $\rho$-mixing \cite{BR}), the random variables $X_k=a_k^p \mathds{1}_{\{ a_k \le t^{1/p} K \}}$ satisfy
		\[ \mathrm{Cov}_{\mu_{\mathrm{Gauss}}} \left( X_k, X_{\ell} \right) \ll e^{-a|k-\ell|} (\mathrm{Var}_{\mu_{\mathrm{Gauss}}} X_k)^{1/2} (\mathrm{Var}_{\mu_{\mathrm{Gauss}}} X_{\ell})^{1/2} \]
		with some constant $a>0$. Here
		\[ \mathrm{Var}_{\mu_{\mathrm{Gauss}}} X_k \le \frac{1}{\log 2} \sum_{1 \le n \le t^{1/p} K} n^{2p} \log \left( 1+\frac{1}{n(n+2)} \right) \ll t^{2-1/p} K^{2p-1}, \]
		therefore
		\[ \mathrm{Var}_{\mu_{\mathrm{Gauss}}} \left( \sum_{k=M+1}^{M+K} X_k \right) \ll \sum_{k, \ell =M+1}^{M+K} e^{-a|k-\ell|} t^{2-1/p} K^{2p-1} \ll t^{2-1/p} K^{2p} . \]
		Further,
		\[ \mathbb{E}_{\mu_{\mathrm{Gauss}}} \sum_{k=M+1}^{M+K} X_k = \frac{K}{\log 2} \sum_{1 \le n \le t^{1/p}K} n^p \log \left( 1+\frac{1}{n(n+2)} \right) \ll t^{1-1/p} K^p , \]
		which is less than $tK^p/2$ provided that $t$ is large enough. The Chebyshev inequality thus yields
		\[ \mu_{\mathrm{Gauss}} \left( \sum_{k=M+1}^{M+K} X_k \ge t K^p \right) \le \mu_{\mathrm{Gauss}} \left( \left| \sum_{k=M+1}^{M+K} (X_k - \mathbb{E}_{\mu_{\mathrm{Gauss}}} X_k ) \right| \ge \frac{t K^p}{2} \right) \ll t^{-1/p} , \]
		which together with \eqref{indicator} proves the claim for the Gauss measure.
		
		Assume next that $1/2 \le p<1$. Then
		\[ \mathrm{Var}_{\mu_{\mathrm{Gauss}}} X_k \ll \left\{ \begin{array}{ll} t^{2-1/p} K^{2p-1} & \textrm{if } 1/2<p<1, \\ \log (tK) & \textrm{if } p=1/2, \end{array} \right. \]
		and
		\[ \mathbb{E}_{\mu_{\mathrm{Gauss}}} \sum_{k=M+1}^{M+K} X_k = m_p K + O(t^{1-1/p} K^p) , \]
		where the error term is smaller than $tK^p/2$ provided that $t$ is large enough. We conclude similarly by the Chebyshev inequality.
		
		The density $\mathrm{d}\mu / \mathrm{d}\mu_{\mathrm{Gauss}}$ is bounded above by a constant depending only on $L$, hence the tail estimates in the claim remain true for any $\mu$ that satisfies \eqref{lipschitz}.
	\end{proof}
	
	The following lemma improves \cite[Corollary 3.2]{BD}.
	\begin{lem}\label{p>1fourierlemma} Let $p>1$. For any $t \in \mathbb{R}$ such that $|t| \le 1/2$,
		\[ \mathbb{E}_{\mu_{\mathrm{Gauss}}} (e^{it a_1^p}-1) = - \frac{1}{\log 2} \cos \left( \frac{\pi}{2p} \right) \Gamma \left( 1-\frac{1}{p} \right) |t|^{1/p} \left( 1-i \, \mathrm{sgn}(t) \tan \frac{\pi}{2p} \right) - H(t) +O(\varepsilon(t)) , \]
		where
		\[ H(t)= \left\{ \begin{array}{ll} i d_p t & \textrm{if } 1<p<2, \\ \frac{i}{\log 2} t \log \frac{1}{|t|} & \textrm{if } p=2, \\ 0 & \textrm{if } p>2, \end{array} \right. \]
		with the constant
		\begin{equation}\label{dpdefinition}
			d_p = \frac{1}{(p-1) \log 2} + \frac{1}{\log 2} \int_1^{\infty} \frac{x^p-\lfloor x \rfloor^p + x^{p-1}}{x^2+x} \, \mathrm{d}x, \quad 1<p<2,
		\end{equation}
		and
		\[ \varepsilon(t) = \left\{ \begin{array}{ll} |t|^{2/p} & \textrm{if } 1<p \le 2, \\ |t|^{1/(p-1)} & \textrm{if } p>2. \end{array} \right. \]
	\end{lem}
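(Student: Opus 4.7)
The strategy is to compute the expectation directly from the explicit distribution of $a_1$ under $\mu_{\mathrm{Gauss}}$:
\[
\mathbb{E}_{\mu_{\mathrm{Gauss}}}(e^{ita_1^p}-1)=\frac{1}{\log 2}\sum_{n=1}^\infty(e^{itn^p}-1)\log\!\left(1+\tfrac{1}{n(n+2)}\right),
\]
and compare this discrete sum with its continuous analogue, exploiting that the asymptotic $\log(1+\tfrac{1}{n(n+2)})=1/n^2+O(1/n^3)$ places $a_1^p$ in the domain of attraction of the one-sided stable law of index $1/p$.

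First I would extract the stable-law main term by replacing the sum with the integral $\int_1^\infty(e^{itx^p}-1)\psi(x)\,dx$, where $\psi(x)=\frac{1}{\log 2}\log(1+\tfrac{1}{x(x+2)})$, and approximating $\psi(x)$ by $1/(\log 2\cdot x^2)$. After the substitution $u=|t|x^p$, this integral reduces to
\[
\frac{|t|^{1/p}}{p\log 2}\int_0^\infty(e^{iu\operatorname{sgn}(t)}-1)u^{-1-1/p}\,du+\text{lower order},
\]
and the inner integral is classical: for $\alpha=1/p\in(0,1)$, $\int_0^\infty(e^{iu\epsilon}-1)u^{-1-\alpha}\,du=-\tfrac{\Gamma(1-\alpha)}{\alpha}\cos(\pi\alpha/2)(1-i\epsilon\tan(\pi\alpha/2))$. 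Substituting yields exactly the stated stable-law main term.

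For the subleading contribution the analysis branches on $p$. For $1<p<2$, both the discrepancy $\psi(x)-1/(\log 2\cdot x^2)$ and the Euler--Maclaurin sum-to-integral correction are absolutely controlled; their linear-in-$t$ pieces combine, after the substitution $y=x^p$ and integration by parts, into exactly $-id_p t$ with $d_p$ of the form \eqref{dpdefinition}, while the remaining $O(|tx^p|^2)$ piece is controlled by splitting at $x=|t|^{-1/p}$ and contributes $O(|t|^{2/p})$. For $p=2$, the analogous argument produces a borderline logarithmic term $-\tfrac{it}{\log 2}\log(1/|t|)$, traceable to the divergence of $\int_1^{1/|t|^{1/2}}x^{p-2}\,dx$. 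For $p>2$, no integrable linear correction survives; I would instead work directly with the sum, Taylor-expand $e^{itn^p}-1=itn^p+O((tn^p)^2)$ up to $n=N_0:=|t|^{-1/(p-1)}$, and use the crude bound $|e^{itn^p}-1|\le 2$ beyond, so that the quadratic remainder $|t|^2\sum_{n\le N_0}n^{2p-3}$ and the unexpanded tail $\sum_{n>N_0}1/n^2$ both contribute $|t|^{1/(p-1)}$, while the divergent linear partial sum $it\sum_{n\le N_0}n^{p-2}$ is absorbed into the stable-law main term via the scaling $u=|t|x^p$.

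The hardest step is the explicit evaluation of $d_p$ in the form \eqref{dpdefinition}: one must track the Euler--Maclaurin remainder $\int f'(x)B_1(\{x\})\,dx$ for $f(x)=(e^{itx^p}-1)\psi(x)$, isolate its linear-in-$t$ component, and then recombine all contributions into a single integral involving $x^p-\lfloor x\rfloor^p$ against the weight $1/(x^2+x)$. A second delicate point is the $p>2$ case, where the divergent linear Taylor sum and the stable-law main term must be matched quantitatively to cancel their common divergent parts; getting the sharp error $|t|^{1/(p-1)}$ (coarser than the formal $|t|^{2/p}$ one would expect from $\psi(x)\sim 1/x^2$) requires choosing precisely that truncation and carefully handling the boundary contribution at $n=N_0$.
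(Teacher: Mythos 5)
Your extraction of the stable main term is essentially the paper's: the paper writes the expectation as $\frac{1}{\log 2}\int_1^\infty\frac{e^{it\lfloor x\rfloor^p}-1}{x^2+x}\,\mathrm{d}x$ (the exact counterpart of your sum, via $\log(1+\frac{1}{n(n+2)})=\int_n^{n+1}\frac{\mathrm{d}x}{x(x+1)}$), splits off $\int_1^\infty\frac{e^{itx^p}-1}{x^2}\,\mathrm{d}x$, and evaluates the extended oscillatory integral exactly; your $u=|t|x^p$ computation reproduces the same constant. Your plan for $1<p<2$ (weight discrepancy plus Euler--Maclaurin, both with $O(x^{p-3})$ linear-in-$t$ integrands) is viable, although the claim that the pieces recombine into exactly \eqref{dpdefinition} is asserted rather than shown; the paper avoids this bookkeeping by keeping the floor function and the weight $\frac{1}{x(x+1)}$ from the start, so that the constant appears directly as $\frac{1}{p-1}+\int_1^\infty\frac{x^p-\lfloor x\rfloor^p+x^{p-1}}{x^2+x}\,\mathrm{d}x$.

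The genuine gaps are in the cases $p\ge 2$. For $p>2$, Taylor-expanding the full sum up to $N_0=|t|^{-1/(p-1)}$ cannot work: with the actual weight $\asymp n^{-2}$ the quadratic remainder is $|t|^2\sum_{n\le N_0}n^{2p-2}\asymp 1$ (your exponent $n^{2p-3}$ is off by one, and even then the bound is $O(1)$, not $O(|t|^{1/(p-1)})$), and already at the scale $n\asymp|t|^{-1/p}$ the remainder is of order $|t|^{1/p}$, i.e.\ as large as the main term; likewise the ``divergent linear partial sum'' $it\sum_{n\le N_0}n^{p-2}$ is of order $1$ and cannot be ``absorbed'' into a main term of size $|t|^{1/p}$. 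The expansion may only be applied to the corrections --- the weight discrepancy, bounded by $\min(|t|x^p,1)x^{-3}$, and the floor/sum-versus-integral discrepancy, bounded by $\min(|t|x^{p-1},1)x^{-2}$ --- while the oscillatory $1/x^2$ integral is left intact; this is exactly how the paper obtains the errors $|t|^{2/p}$ and $|t|^{1/(p-1)}$ (the latter from the crossover at $x\asymp|t|^{-1/(p-1)}$, which is the only correct role of your $N_0$). For $p=2$, your attribution of the logarithm to the divergence of $\int_1^{|t|^{-1/2}}x^{p-2}\,\mathrm{d}x$ is wrong: at $p=2$ that integral diverges like a power, and a linear-in-$t$ term of that size would be comparable to the main term; the logarithm comes from the $x^{p-3}$-type correction integrands, which are borderline at $p=2$. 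More seriously, a crude absolute-value treatment of the corrections at $p=2$ only yields an error $O(|t|\log\frac{1}{|t|})$, not the stated $O(|t|)$; to pin down the coefficient of $t\log\frac{1}{|t|}$ and reach $O(|t|)$ one must exploit the cancellation coming from $\{x\}-\tfrac12$ against the oscillating factor (the paper's estimate $\int_n^{n+1}\frac{e^{itx^2}(\{x\}-1/2)}{x}\,\mathrm{d}x=O(n^{-2}+|t|)$ in its treatment of $I_3$), a step absent from your sketch.
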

	
	\begin{proof} It is enough to prove the claim for $0<t \le 1/2$, as the case of negative $t$ follows from complex conjugation. The expected value in the claim can be expressed as
		\[ \mathbb{E}_{\mu_{\mathrm{Gauss}}} (e^{i t a_1^p} -1) = \int_0^1 (e^{it \lfloor 1/x \rfloor^p}-1) \, \mathrm{d} \mu_{\mathrm{Gauss}} (x) = \frac{1}{\log 2} \int_1^{\infty} \frac{e^{it \lfloor x \rfloor^p}-1}{x^2+x} \, \mathrm{d}x = \frac{1}{\log 2} (I_1 + I_2 + I_3), \]
		where
		\[ I_1= \int_1^{\infty} \frac{e^{itx^p}-1}{x^2} \, \mathrm{d}x, \quad I_2= \int_1^{\infty} (e^{itx^p} -1) \left( \frac{1}{x^2+x} - \frac{1}{x^2} \right) \, \mathrm{d}x , \quad I_3 = \int_1^{\infty} \frac{e^{it \lfloor x \rfloor^p} - e^{itx^p}}{x^2+x} \, \mathrm{d}x. \]
		A substitution gives $I_1 = t^{1/p} \int_{t^{1/p}}^{\infty} (e^{ix^p}-1)/x^2 \, \mathrm{d}x$, and here the integral extended to $[0,\infty)$ evaluates to
		\[ \int_0^{\infty} \frac{e^{ix^p}-1}{x^2} \, \mathrm{d}x = - \cos \left( \frac{\pi}{2p} \right) \Gamma \left( 1-\frac{1}{p} \right) + i \sin \left( \frac{\pi}{2p} \right) \Gamma \left( 1-\frac{1}{p} \right) . \]
		In particular,
		\[ I_1 = - \cos \left( \frac{\pi}{2p} \right) \Gamma \left( 1-\frac{1}{p} \right) t^{1/p} \left( 1-i \tan \frac{\pi}{2p} \right) + I_1' \]
		with $I_1' = - t^{1/p} \int_0^{t^{1/p}} (e^{ix^p}-1)/x^2 \, \mathrm{d}x$, and it remains to show that
		\begin{equation}\label{I1I2I3}
			\frac{1}{\log 2} (I_1' + I_2 + I_3) = - H(t) + O(\varepsilon (t)).
		\end{equation}
		
		Assume first that $1<p<2$. Then
		\[\begin{split} I_1' &= - t^{1/p} \int_0^{t^{1/p}} \frac{ix^p}{x^2} \, \mathrm{d}x - t^{1/p} \int_0^{t^{1/p}} \frac{e^{ix^p}-1-ix^p}{x^2} \, \mathrm{d}x = - \frac{i}{p-1} t + O \left( t^{1/p} \int_0^{t^{1/p}} x^{2p-2} \, \mathrm{d}x \right) \\ &= - \frac{i}{p-1} t + O(t^2) , \end{split} \]
		as well as
		\[ \begin{split} I_2 &= \int_1^{\infty} \frac{-itx^p}{x^3+x^2} \, \mathrm{d}x + \int_1^{\infty} \frac{1+itx^p - e^{itx^p}}{x^3+x^2} \, \mathrm{d}x \\ &= - it \int_1^{\infty} \frac{x^{p-1}}{x^2 +x} \, \mathrm{d}x + O \left( \int_1^{\infty} \frac{\min \{ t^2 x^{2p}, 1+ tx^p \}}{x^3} \, \mathrm{d}x \right) \\ &= - it \int_1^{\infty} \frac{x^{p-1}}{x^2 +x} \, \mathrm{d}x + O (t^{2/p}) , \end{split}  \]
		and
		\[ \begin{split} I_3 &= \int_1^{\infty} \frac{it (\lfloor x \rfloor^p -x^p)}{x^2+x} \, \mathrm{d}x + \int_1^{\infty} \frac{(e^{itx^p}-1)it (\lfloor x \rfloor^p -x^p)}{x^2+x} \, \mathrm{d}x \\ &\phantom{={}} + \int_1^{\infty} \frac{e^{itx^p} \left( e^{it(\lfloor x \rfloor^p-x^p)} -1 - it (\lfloor x \rfloor^p -x^p) \right)}{x^2+x} \, \mathrm{d}x \\ &= it \int_1^{\infty} \frac{\lfloor x \rfloor^p - x^p}{x^2+x} \, \mathrm{d}x + O \left( \int_1^{\infty} \frac{\min \{ tx^p, 1 \} tx^{p-1}}{x^2} \, \mathrm{d}x + \int_1^{\infty} \frac{\min \{ t^2 x^{2p-2}, 1+t x^{p-1} \}}{x^2} \, \mathrm{d}x \right) \\ &= it \int_1^{\infty} \frac{\lfloor x \rfloor^p - x^p}{x^2+x} \, \mathrm{d}x + O(t^{2/p}) . \end{split} \]
		The previous three formulas yield \eqref{I1I2I3}.
		
		Assume next that $p = 2$. Then
		\[ |I_1'| \le t^{1/2} \int_0^{t^{1/2}} \frac{|e^{ix^2}-1|}{x^2} \, \mathrm{d}x \le t, \]
		as well as
		\[ \begin{split} I_2 &= \int_1^{t^{-1/2}} \frac{-it x^2}{x^3+x^2} \, \mathrm{d}x + \int_1^{t^{-1/2}} \frac{1+itx^2 - e^{itx^2}}{x^3+x^2} \, \mathrm{d}x + \int_{t^{-1/2}}^{\infty} \frac{1-e^{itx^2}}{x^3+x^2} \, \mathrm{d}x \\ &= -it \log \frac{t^{-1/2}+1}{2} + O \left( \int_1^{t^{-1/2}} \frac{t^2 x^4}{x^3} \, \mathrm{d}x + \int_{t^{-1/2}}^{\infty} \frac{1}{x^3} \, \mathrm{d}x \right) \\ &= - \frac{it}{2} \log \frac{1}{t} +O(t) , \end{split} \]
		and
		\[ \begin{split} I_3 &= \int_1^{1/t} \frac{e^{itx^2} it (\lfloor x \rfloor^2 - x^2)}{x^2 +x} \, \mathrm{d}x + \int_1^{1/t} \frac{e^{itx^2} (e^{it (\lfloor x \rfloor^2 - x^2)} - 1 - it (\lfloor x \rfloor^2 - x^2))}{x^2+x} \, \mathrm{d}x +\int_{1/t}^{\infty} \frac{e^{it\lfloor x \rfloor^2} - e^{itx^2}}{x^2+x} \, \mathrm{d}x \\ &= -it \int_1^{1/t} \frac{e^{itx^2} \{ x \} (\lfloor x \rfloor +x)}{x^2+x} \, \mathrm{d}x + O \left( \int_1^{1/t} \frac{t^2 x^2}{x^2} \, \mathrm{d}x + \int_{1/t}^{\infty} \frac{1}{x^2} \, \mathrm{d}x \right) \\ &= -2it \int_1^{1/t} \frac{e^{itx^2} \{ x\}}{x} \, \mathrm{d}x +O(t). \end{split} \]
		In the last step we replaced $\lfloor x \rfloor +x$ by $2x$ in the numerator, and $x^2+x$ by $x^2$ in the denominator. Here the average value $1/2$ of $\{ x \}$ would give
		\[ -2it \int_1^{1/t} \frac{e^{itx^2} (1/2)}{x} \, \mathrm{d}x = -\frac{it}{2} \left( \int_t^1 \frac{1}{y} \, \mathrm{d}y + \int_t^1 \frac{e^{iy}-1}{y} \, \mathrm{d}y + \int_1^{1/t} \frac{e^{iy}}{y} \, \mathrm{d}y \right) = - \frac{it}{2} \log \frac{1}{t} +O(t), \]
		where the last integral was estimated using integration by parts. On the other hand, $1/x = 1/n + O(1/n^2)$ and $e^{itx^2} = e^{itn^2} +O(tn)$ on the interval $x\in [n,n+1]$, hence
		\[ \int_n^{n+1} \frac{e^{itx^2} (\{ x \} -1/2)}{x} \, \mathrm{d}x = \frac{e^{itn^2}}{n} \int_n^{n+1} \left( \{ x \} - \frac{1}{2} \right) \, \mathrm{d}x + O \left( \frac{1}{n^2} +t \right) = O \left( \frac{1}{n^2} +t \right) , \]
		and by summing over $1 \le n \le 1/t$, we obtain
		\[ -2it \int_1^{1/t} \frac{e^{itx^2} (\{ x \} - 1/2)}{x} \, \mathrm{d}x = O(t) . \]
		Therefore $I_3 = -\frac{it}{2} \log \frac{1}{t}+O(t)$, and \eqref{I1I2I3} follows.
		
		Finally, assume that $p>2$. Then
		\[ \begin{split} |I_1'| &\le t^{1/p} \int_0^{t^{1/p}} x^{p-2} \, \mathrm{d}x \ll t, \\ |I_2| &\le \int_1^{\infty} \frac{\min \{ tx^p, 2 \}}{x^3} \, \mathrm{d}x \ll t^{2/p}, \\ |I_3| &\ll \int_1^{\infty} \frac{\min \{ tx^{p-1}, 1 \}}{x^2} \, \mathrm{d}x \ll t^{1/(p-1)}, \end{split} \]
		and \eqref{I1I2I3} follows.
	\end{proof}
	
	We now prove the analogue of the limit law \eqref{sumaklimitlaw} for power sums. A limit law itself for power sums $\sum_{k=1}^K a_k^p$ is classical \cite[p.\ 197--198]{IK}, but to the best of our knowledge the rate of convergence has not explicitly appeared anywhere in the literature. Note that the centering term $A_K$ is not necessary for the limit law itself, only for the rate.
	\begin{lem}\label{sumakplimitlawlemma} Let $p>1$. For any even integer $K \ge 2$, let
		\[ X_K = \frac{1}{\sigma K^p} \Bigg( \sum_{\substack{k=1 \\ k \textrm{ odd}}}^K a_k^p + A_K \Bigg) \quad \textrm{and} \quad Y_K = \frac{1}{\sigma K^p}\Bigg( \sum_{\substack{k=1 \\ k \textrm{ even}}}^K a_k^p + A_K \Bigg) , \]
		where
		\[ A_K = \left\{ \begin{array}{ll} \frac{d_p}{2} K & \textrm{if } 1<p<2, \\ \frac{1}{\log 2} K \log K & \textrm{if } p=2, \\ 0 & \textrm{if } p>2, \end{array} \right. \]
		$d_p$ is as in \eqref{dpdefinition}, and $\sigma = \left( \frac{1}{2\log 2} \cos \left( \frac{\pi}{2p} \right) \Gamma \left( 1-\frac{1}{p} \right) \right)^p$. Then $(X_K,Y_K) \overset{d}{\to} \mathrm{Stab}(1/p,1) \otimes \mathrm{Stab} (1/p,1)$ with rate $\ll \varepsilon_K \log K$ in the Kolmogorov metric, and
		\[ \frac{X_K+Y_K}{2^p} \overset{d}{\to} \mathrm{Stab} (1/p,1) \quad \textrm{and} \quad \frac{X_K-Y_K}{2^p} \overset{d}{\to} \mathrm{Stab}(1/p,0) \]
		with rate $\ll \varepsilon_K$ in the Kolmogorov metric, where
		\[ \varepsilon_K= \left\{ \begin{array}{ll} K^{-1} & \textrm{if } 1<p \le 2, \\ K^{-1/(p-1)} & \textrm{if } p>2 . \end{array} \right. \]
	\end{lem}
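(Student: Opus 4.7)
The proof will follow the same blueprint as Lemma~\ref{sumRaklemma}: estimate the characteristic function $\phi(t_1,t_2) = \mathbb{E}_{\mu} e^{it_1 X_K + it_2 Y_K}$, compare it to the characteristic function $\varphi(t_1)\varphi(t_2)$ of $\mathrm{Stab}(1/p,1) \otimes \mathrm{Stab}(1/p,1)$, and then invoke Sadikova's inequality \eqref{sadikova} in two dimensions and the classical Esseen inequality in one dimension. The key input replacing Lemma~\ref{gausstransformationlemma} is now Lemma~\ref{p>1fourierlemma}, and the key tail estimate replacing Lemma~\ref{sumaktailslemma} is Lemma~\ref{sumakptailslemma}.

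The plan is to introduce the blocks $Z_k = Z_{k,K} = \frac{t_1}{\sigma K^p} a_{2k-1}^p + \frac{t_2}{\sigma K^p} a_{2k}^p$ for $1 \le k \le K/2$, which form a $\psi$-mixing sequence with exponential rate. Using $\mu(a_k=n) \ll 1/n^2$ together with truncation at $n \approx (K^p/|t_j|)^{1/p}$, I would bound $\mathbb{E}_\mu|e^{iZ_k}-1|$ by $\ll (|t_1|^{1/p}+|t_2|^{1/p})/K$, and bound $\mathbb{E}_\mu(1-\cos Z_k)$ from below by the same order via a restriction to $a_{2k-1}\asymp (K^p/|t_1|)^{1/p}$ with $a_{2k}=1$ (and vice versa). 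An application of Heinrich's Lemma \cite[Lemma 1]{HE} with block size $m \asymp (|t_1|^{1/p}+|t_2|^{1/p})^{-1/2} K^{1/2}$ then reduces the problem to estimating $\sum_{k=1}^{K/2}\mathbb{E}_\mu(e^{iZ_k}-1)$. Using the Gauss--Kuzmin theorem \eqref{gausskuzmin} (applied to $Z_k$, which is $\mathcal{A}_{2k-1}^\infty$-measurable), we may replace $\mathbb{E}_\mu$ by $\mathbb{E}_{\mu_\mathrm{Gauss}}$ with an extra factor $e^{-ak}$. Since the variables are then identically distributed, Lemma~\ref{p>1fourierlemma} applied with $t \to t_j/(\sigma K^p)$ yields, after summing over the $K/2$ blocks, the main term $-|t_1|^{1/p}(1-i\,\mathrm{sgn}(t_1)\tan\tfrac{\pi}{2p}) - |t_2|^{1/p}(1-i\,\mathrm{sgn}(t_2)\tan\tfrac{\pi}{2p})$; the $H$-contribution is exactly cancelled by the centering $e^{it_jA_K/(\sigma K^p)}$ built into the definitions of $X_K, Y_K$ (for $p=2$ one verifies this using $\log(\sigma K^p/|t_j|) = p \log K + O(\log(2+|t_j|))$). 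The resulting estimate
\[
|\phi(t_1,t_2) - \varphi(t_1)\varphi(t_2)| \ll \bigl(\varepsilon_K(|t_1|^{2/p}+|t_2|^{2/p})\bigr)^{?} e^{-a(|t_1|+|t_2|)} + \text{(polynomial decay)}
\]
is valid uniformly in a large range $|t_1|,|t_2| \le T$ with $T \asymp K^{1/p}$ or $K^{1/(p-1)}$ as appropriate.

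For $|t_1|, |t_2| \le 1/K^{p-1}$ the preceding estimate is useless, so I would replace it with direct bounds $|\phi(t_1,t_2)-\phi(t_1,0)| \ll \mathbb{E}_\mu|e^{it_2 Y_K}-1|$ (analogue of \eqref{charfunctionsmallt2}) and $|\phi(t_1,t_2)-\phi(t_1,0)-\phi(0,t_2)+1| \ll \mathbb{E}_\mu(|e^{it_1 X_K}-1|\cdot|e^{it_2 Y_K}-1|)$ (analogue of \eqref{charfunctionsmallt1smallt2}), estimating the right-hand sides through the tail bounds of Lemma~\ref{sumakptailslemma}. Feeding all of this into Sadikova's inequality \eqref{sadikova} with truncation parameter $T$ chosen as the larger of the two one-dimensional ranges yields the claimed joint rate $\ll \varepsilon_K \log K$, while applying the one-dimensional Esseen inequality to $\phi(t/2^p, t/2^p)$ and $\phi(t/2^p,-t/2^p)$ (using the identities $\varphi(t/2^p)^2 = \exp(-|t|^{1/p}(1-i\,\mathrm{sgn}(t)\tan\tfrac{\pi}{2p}))$ and $\varphi(t/2^p)\varphi(-t/2^p) = \exp(-|t|^{1/p})$) gives the marginal limit laws for $(X_K+Y_K)/2^p$ and $(X_K-Y_K)/2^p$ with the improved rate $\varepsilon_K$ (no extra $\log K$, because the double integral is replaced by a single one). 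The main technical obstacle is the case $p=2$, where the logarithmic term in $H$ forces one to be careful that the centering really produces a pure phase cancellation up to an error of size $\ll |t|/K$; this is precisely what the ``$\log$ of the $\log$'' computation in Lemma~\ref{p>1fourierlemma} is set up to handle, and it is the step most prone to bookkeeping errors.
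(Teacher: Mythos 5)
Your proposal follows essentially the same route as the paper's proof: the same blocks $Z_k = \frac{t_1}{\sigma K^p}a_{2k-1}^p + \frac{t_2}{\sigma K^p}a_{2k}^p$, the same bounds $\mathbb{E}_{\mu}(1-\cos Z_k) \gg (|t_1|^{1/p}+|t_2|^{1/p})/K$ and $\mathbb{E}_{\mu}|e^{iZ_k}-1| \ll (|t_1|^{1/p}+|t_2|^{1/p})/K$ feeding into Heinrich's lemma, the Gauss--Kuzmin transfer, Lemma \ref{p>1fourierlemma} with the $H$-term cancelled by $A_K$, and then the small-$t$ tail bounds from Lemma \ref{sumakptailslemma} plus Sadikova and Esseen, exactly as in Lemma \ref{sumRaklemma}. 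One correction: for $1<p\le 2$ the truncation in Esseen/Sadikova must be taken as $T \asymp 1/\varepsilon_K = K$ (not $K^{1/p}$, which would only give rate $K^{-1/p}$); this is legitimate because, as in the paper, the characteristic-function estimate holds uniformly in the much larger range $|t_1|,|t_2| \le K^p/(\log K)^{3p}$, with the exponential damping factor $e^{-a(|t_1|^{1/p}+|t_2|^{1/p})}$ making the resulting integrals $\ll \varepsilon_K$.
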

	
	\begin{proof} Following the steps in the proof of Lemma \ref{gausstransformationlemma} shows that for any $t_1, t_2 \in \mathbb{R}$ such that $|t_1|, |t_2| \le 1/2$,
		\[ \left| \mathbb{E}_{\mu_{\mathrm{Gauss}}} \left( e^{it_1 a_1^p} -1 \right) \left( e^{it_2 a_2^p} -1 \right) \right| \ll |t_1|^{2/p} + |t_2|^{2/p} . \]
		Therefore $\mathbb{E}_{\mu_{\mathrm{Gauss}}} \left( e^{i (t_1 a_1^p + t_2 a_2^p)} -1 \right) = w_p (t_1) + w_p (t_2) + O(|t_1|^{2/p} + |t_2|^{2/p})$ with the same function $w_p(t)=\mathbb{E}_{\mu_{\mathrm{Gauss}}} (e^{it a_1^p}-1)$. An application of Lemma \ref{p>1fourierlemma} thus leads to
		\begin{equation}\label{gausst1t2p}
			\begin{split}
				\mathbb{E}_{\mu_{\mathrm{Gauss}}} \left( e^{i (t_1 a_1^p + t_2 a_2^p)} -1 \right) &= - \frac{\cos \left( \frac{\pi}{2p} \right) \Gamma \left( 1-\frac{1}{p} \right)}{\log 2}|t_1|^{1/p} \left( 1-i \, \mathrm{sgn} (t_1) \tan \frac{\pi}{2p} \right) - H(t_1) \\ &\phantom{={}}- \frac{\cos \left( \frac{\pi}{2p} \right) \Gamma \left( 1-\frac{1}{p} \right)}{\log 2}|t_2|^{1/p} \left( 1-i \, \mathrm{sgn} (t_2) \tan \frac{\pi}{2p} \right) - H(t_2) \\ &\phantom{={}}+ \left\{ \begin{array}{ll} O (|t_1|^{2/p} + |t_2|^{2/p}) & \textrm{if } 1<p \le 2, \\ O\left( |t_1|^{1/(p-1)} + |t_2|^{1/(p-1)} \right) & \textrm{if } p>2 . \end{array} \right. \end{split}
		\end{equation}
		
		Now let $\phi (t_1, t_2) = \mathbb{E}_{\mu} (e^{i t_1 X_K + i t_2 Y_K})$ be the characteristic function of the random vector $(X_K, Y_K)$, and let $\varphi (t) = \exp (-|t|^{1/p} (1-i \, \mathrm{sgn}(t) \tan \frac{\pi}{2p}))$ be the characteristic function of $\mathrm{Stab}(1/p,1)$. We estimate $\phi (t_1,t_2)$ in the range $|t_1|, |t_2| \le K^p / (\log K)^{3p}$. Let
		\[ Z_k = Z_{k,K} = \frac{t_1}{\sigma K^p} a_{2k-1}^p + \frac{t_2}{\sigma K^p} a_{2k}^p, \quad 1 \le k \le K/2 . \]
		Similarly to the proof of Lemma \ref{sumRaklemma}, we deduce
		\[ \mathbb{E}_{\mathrm{\mu}} (1-\cos Z_k) \gg \frac{|t_1|^{1/p} + |t_2|^{1/p}}{K} \quad \textrm{and} \quad \mathbb{E}_{\mu} |e^{iZ_k}-1| \ll \frac{|t_1|^{1/p} + |t_2|^{1/p}}{K} . \]
		An application of \cite[Lemma 1]{HE} with $m=P \approx (\frac{|t_1|^{1/p} + |t_2|^{1/p}}{K})^{-1/2} \gg (\log K)^{3/2}$ now leads to
		\begin{multline*}
			\left| \mathbb{E}_{\mu} \exp \left( i \sum_{k=1}^{K/2} Z_k \right) - \exp \left( \sum_{k=1}^{K/2} \mathbb{E}_{\mu_{\mathrm{Gauss}}} (e^{iZ_k}-1) + O \left( \frac{|t_1|^{1/p} + |t_2|^{1/p}}{K} \right) \right)  \right| \\ \ll \frac{|t_1|^{2/p} + |t_2|^{2/p}}{K} e^{-a(|t_1|^{1/p} + |t_2|^{1/p})} + \frac{|t_1|^{1/p} + |t_2|^{1/p}}{K^{100}}
		\end{multline*}
		with a small constant $a>0$. Together with formula \eqref{gausst1t2p} this shows that
		\[ |\phi (t_1, t_2) - \varphi (t_1) \varphi (t_2)| \ll \left( U(t_1) + U(t_2) \right) e^{-a(|t_1|^{1/p} + |t_2|^{1/p})} +  \frac{|t_1|^{1/p} + |t_2|^{1/p}}{K^{100}} \]
		uniformly in $|t_1|, |t_2| \le K^p / (\log K)^{3p}$, where
		\[ U(t) = \left\{ \begin{array}{ll} \frac{|t|^{1/p} + |t|^{2/p}}{K} & \textrm{if } 1<p<2, \\ \frac{|t|^{1/2}+|t \log |t||}{K} & \textrm{if } p=2, \\ \frac{|t|^{1/p}}{K} +  \frac{|t|^{1/(p-1)}}{K^{1/(p-1)}} & \textrm{if } p>2 . \end{array} \right. \]
		The rest of the proof is identical to that of Lemma \ref{sumRaklemma}.
	\end{proof}
	
	Finally, we estimate the contribution of a $1$-periodic function with a lower order singularity at integers, and then prove Theorem \ref{p>1theorem}.
	\begin{lem}\label{qlemma} Let $g$ be a $1$-periodic function such that $|g(x)| \ll 1/ \| x \|^{p'}$ with some constant $0 \le p' < p$. Then
		\[ \mu \left( \frac{1}{(\log N)^p} \left| \sum_{n=1}^N \frac{g(n \alpha)}{n^p} \right| \ge \frac{1}{(\log N)^{1/2}} \right) \ll \frac{1}{(\log N)^{1/2}} \]
		with an implied constant depending only on $L$, $A$, $g$ and $p$.
	\end{lem}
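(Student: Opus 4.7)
The plan is to split $\sum_{n=1}^N g(n\alpha)/n^p$ according to whether $\|n\alpha\|\ge 1/(2n)$ (far from the singularity) or $\|n\alpha\|<1/(2n)$ (close), and to bound each contribution by $(\log N)^{p-1/2}$ on a set of $\mu$-measure at least $1-O((\log N)^{-1/2})$.

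For the far part, the hypothesis $|g(x)|\ll 1/\|x\|^{p'}$ gives the first-moment estimate
\[
\mathbb{E}_\lambda \sum_{\substack{n\le N \\ \|n\alpha\|\ge 1/(2n)}} \frac{|g(n\alpha)|}{n^p} \ll \sum_{n=1}^N \frac{1}{n^p}\int_{1/(2n)}^{1/2} \frac{\mathrm{d}x}{x^{p'}} \ll 1,
\]
where the outer sum converges because $p-p'>0$ (with a harmless logarithm when $p'=1$). Since $\mathrm{d}\mu/\mathrm{d}\lambda$ is bounded in terms of $L$, the same holds under $\mu$, and Markov's inequality bounds the far part by $(\log N)^{1/2}$ outside a $\mu$-set of measure $\ll (\log N)^{-1/2}$.

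For the close part, Legendre's theorem (as used in Lemma \ref{Ruklemma}) forces $n=jq_k$ with $1\le j\le(2q_k\|q_k\alpha\|)^{-1/2}$ and $\|n\alpha\|=j\|q_k\alpha\|$. Summing $|g(jq_k\alpha)|/(jq_k)^p$ in $j$ converges because $p+p'>1$, and the standard bound $\|q_k\alpha\|^{-1}\le 2(a_{k+1}+1)q_k$ yields
\[
\sum_{\substack{n\le N \\ \|n\alpha\|<1/(2n)}} \frac{|g(n\alpha)|}{n^p} \ll S:=\sum_{k=0}^\infty \frac{(a_{k+1}+1)^{p'}}{q_k^{p-p'}}.
\]

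To control $S$ I apply Markov's inequality with a fractional power $r$ in the interval $(1/(2p-1),\,1/p')$, which is non-empty precisely because $p'<p<2p-1$ (using $p>1$). For such $r\in(0,1]$ the subadditivity $(\sum X_k)^r\le\sum X_k^r$ combined with the deterministic bound $q_k\ge F_{k+1}$ gives
\[
\mathbb{E}_\mu S^r \le \sum_{k}F_{k+1}^{-r(p-p')}\,\mathbb{E}_\mu(a_{k+1}+1)^{p'r}\ll 1,
\]
since $p'r<1$ makes the Gauss moment $\mathbb{E}_{\mu_\mathrm{Gauss}}(a_1+1)^{p'r}\ll\sum_n n^{p'r-2}$ finite, which transfers to $\mu$ via the bounded density. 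Markov then yields $\mu(S\ge(\log N)^{p-1/2})\ll(\log N)^{-r(p-1/2)}\ll(\log N)^{-1/2}$. The main obstacle is that naive pointwise bounds such as $\max_k a_k\le T$ are insufficient once $p'$ approaches $p$; it is the fractional-power Markov trick, valid because of the strict inequality $p'<2p-1$, that makes the close-part bound work.
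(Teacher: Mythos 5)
Your argument is correct and takes a genuinely different route from the paper. The paper first reduces without loss of generality to $1<p'<p$, then bounds the \emph{entire} block $\sum_{q_k\le n<q_{k+1}}|g(n\alpha)|/n^p$ at once: by the best-approximation property and pigeonhole, $\sum_{1\le n<q_{k+1}}\|n\alpha\|^{-p'}\ll \|q_k\alpha\|^{-p'}\ll a_{k+1}^{p'}q_k^{p'}$ (here $p'>1$ is needed for $\sum_j j^{-p'}$ to converge), giving the same upper bound $\sum_k a_{k+1}^{p'}/q_k^{p-p'}$ without any near/far split. It then uses Lemma \ref{KNlemma} to truncate the sum at $k\le\log N$ and controls the truncated sum by a \emph{weighted} union bound: it shows that, since $\sum_k F_k^{-(p-p')/2}<\infty$, exceeding the threshold forces some individual $a_k^{p'}/F_k^{(p-p')/2}$ to be large, and then applies the tail bound $\mu(a_k\ge T)\ll 1/T$ term by term, which effectively achieves the exponent $(p-1/2)/p'>1/2$. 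You instead split near/far, handle the far terms by a first-moment bound using the fact that $n\alpha\bmod 1$ is uniform under Lebesgue, handle the close terms via Legendre, and then bound the resulting series $S=\sum_k(a_{k+1}+1)^{p'}/q_k^{p-p'}$ by computing a fractional moment $\mathbb{E}_\mu S^r$ with $r\in\bigl(1/(2p-1),\min(1,1/p')\bigr)$ and applying Markov. Both approaches ultimately rest on $q_k\ge F_{k+1}$ and the strict inequality $p'<2p-1$; the paper's weighted union bound is marginally sharper (it reaches the boundary exponent $1/p'$), while your fractional-moment argument is perhaps cleaner as a single Markov application and does not require the reduction to $p'>1$, at the cost of also treating the far terms, which drop out in the paper's version once $p'>1$ is assumed. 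One small point of care: you should phrase the admissible range as $r\in\bigl(1/(2p-1),\min(1,1/p')\bigr)$ so that both $r\le1$ (for subadditivity) and $p'r<1$ (for the finite moment) are guaranteed; this interval is nonempty precisely because $p>1$ and $p'<2p-1$, as you observe.
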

	
	\begin{proof} We may assume that $1<p' <p$. For any $k \ge 0$, we have
		\[ \sum_{q_k \le n < q_{k+1}} \frac{|g(n\alpha)|}{n^p} \ll \frac{1}{q_k^p} \sum_{1 \le n < q_{k+1}} \frac{1}{\| n \alpha \|^{p'}} . \]
		For any two distinct integers $1 \le n,n' < q_{k+1}$, we have $1 \le |n-n'|<q_{k+1}$, hence by the best approximation property of continued fractions, $\| n \alpha - n' \alpha \| \ge \| q_k \alpha \|$. An application of the pigeonhole principle thus shows that
		\[ \sum_{1 \le n < q_{k+1}} \frac{1}{\| n \alpha \|^{p'}} \le 2 \sum_{j=1}^{\infty} \frac{1}{(j \| q_k \alpha \|)^{p'}} \ll \frac{1}{\| q_k \alpha \|^{p'}} \ll a_{k+1}^{p'} q_k^{p'} , \]
		and consequently
		\[ \left| \sum_{n=1}^N \frac{g(n\alpha)}{n^p} \right| \le \sum_{k=0}^{K_N^*} \sum_{q_k \le n < q_{k+1}} \frac{|g(n\alpha)|}{n^p} \ll \sum_{k=0}^{K_N^*} \frac{a_{k+1}^{p'}}{q_k^{p-p'}} , \]
		where $K_N^*=K_N^*(\alpha)$ is the random index for which $q_{K_N^*} \le N < q_{K_N^* +1}$. Lemma \ref{KNlemma} shows that $K_N^* +1 \le \log N$ outside a set of $\mu$-measure $\ll (\log N)^{-1/2}$. Since $q_k \ge F_{k+1}$, with $F_k$ denoting the sequence of Fibonacci numbers,
		\[ \mu \left( \left| \sum_{n=1}^N \frac{g(n\alpha)}{n^p} \right| \gg \sum_{1 \le k \le \log N} \frac{a_k^{p'}}{F_k^{p-p'}} \right) \ll \frac{1}{(\log N)^{1/2}} . \]
		As $\sum_{k=1}^{\infty} 1/F_k^{(p-p')/2} < \infty$, the inequality $\sum_{1 \le k \le \log N} a_k^{p'} / F_k^{p-p'} \ge (\log N)^{p-1/2}$ implies that $a_k^{p'}/F_k^{(p-p')/2} \gg (\log N)^{p-1/2}$ for some $1 \le k \le \log N$. Therefore
		\[ \begin{split} \mu \left( \sum_{1 \le k \le \log N} \frac{a_k^{p'}}{F_k^{p-p'}} \gg (\log N)^{p-1/2} \right) &\le \sum_{1 \le k \le \log N} \mu \left( \frac{a_k^{p'}}{F_k^{(p-p')/2}} \gg (\log N)^{p-1/2} \right) \\ &\ll \sum_{1 \le k \le \log N} \frac{1}{F_k^{(p-p')/(2p')} (\log N)^{(p-1/2)/p'}} \\ &\ll (\log N)^{- (p-1/2)/p'} . \end{split} \]
		Here $(p-1/2)/p' > 1/2$, and the claim follows.
	\end{proof}
	
	\begin{proof}[Proof of Theorem \ref{p>1theorem}] Lemma \ref{lipschitzlemma} shows that it is enough to prove the desired limit law with rate under the assumption \eqref{lipschitz}. Lemma \ref{qlemma} reduces the theorem to the special case $g_1=g_2=0$. Let $K_N^*$ and $K_N$ be as in Section \ref{closesection}. We give separate proofs in the ranges $1<p<3/2$ and $p \ge 3/2$.
		
		Assume first that $1<p<3/2$. As the proof is very similar to that of Theorem \ref{maintheorem}, we only point out the necessary modifications. We start by deducing an analogue of Lemma \ref{awaylemma}. Consider the sequences of functions
		\[ f_n (x) = \left\{ \begin{array}{ll} 2^p & \text{if } 0 \le x < \frac{1}{2n}, \\ \frac{1}{n^p x^p} & \text{if } \frac{1}{2n} \le x \le \frac{1}{2}, \\ 0 & \text{else}, \end{array} \right. \qquad \textrm{and} \qquad \tilde{f}_n (x) = \left\{ \begin{array}{ll} 2^p & \text{if } 0 \le x < \frac{1}{2n}, \\ 0 & \text{else}. \end{array} \right. \]
		Observe that
		\[ \begin{split} \sum_{n=1}^N \int_0^{\infty} f_n (x) \, \mathrm{d}x &= \frac{2^{p-1}p}{p-1} \log N +O(1), \\  \sum_{n=1}^N \int_0^{\infty} \tilde{f}_n (x) \, \mathrm{d}x &= 2^{p-1} \log N +O(1) , \end{split} \]
		and that for all $1 \le n \le m$,
		\[ \int_0^{\infty} f_n (nx) f_m (mx) \, \mathrm{d}x \ll \frac{1}{m^2} \qquad \textrm{and} \qquad \int_0^{\infty} \tilde{f}_n (nx) \tilde{f}_m (mx) \, \mathrm{d}x \ll \frac{1}{m^2}. \]
		Arguing as in the proof of Lemma \ref{awaylemma}, an application of Lemma \ref{schmidtlemma} leads to
		\begin{equation}\label{pawayfromsingularity}
			\sum_{\substack{n=1 \\ \langle n \alpha \rangle \ge 1/(2n)}}^N \frac{1}{n^p \langle n \alpha \rangle^p} = \frac{2^{p-1}}{p-1} \log N + \xi_{1,N} (\alpha) \quad \textrm{and} \quad \sum_{\substack{n=1 \\ \langle n \alpha \rangle \le - 1/(2n)}}^N \frac{1}{n^p |\langle n \alpha \rangle|^p} = \frac{2^{p-1}}{p-1} \log N + \xi_{2,N} (\alpha)
		\end{equation}
		with some error terms $\xi_{j,N}(\alpha)$, $j=1,2$ that satisfy
		\[ \mu \left( \frac{|\xi_{j,N}(\alpha)|}{(\log N)^p} \ge \frac{(\log \log N)^{1/3}}{(\log N)^{\frac{2p-1}{3}}} \right) \ll \frac{(\log \log N)^{1/3}}{(\log N)^{\frac{2p-1}{3}}} . \]
		
		Let $R_p(x) = \sum_{1 \le j < (x/2)^{1/2}} x^p/j^{2p}$, and $r_{m,p}=\sum_{j=1}^m 1/j^{2p}$ with the convention $r_{0,p}=0$. In particular, $R_p(x)=\zeta(2p) x^p +O(x^{1/2})$. Following the steps in Lemma \ref{Ruklemma} yields
		\[ \sum_{\substack{n=1 \\ 0< \langle n \alpha \rangle < 1/(2n)}}^N \frac{1}{n^p \langle n \alpha \rangle^p} = \sum_{\substack{k=1 \\ k \textrm{ odd}}}^{K_N} R_p (u_k) + \xi_{1,N}(\alpha) \quad \textrm{and} \sum_{\substack{n=1 \\ -1/(2n)< \langle n \alpha \rangle < 0}}^N \frac{1}{n^p |\langle n \alpha \rangle|^p} = \sum_{\substack{k=1 \\ k \textrm{ even}}}^{K_N} R_p (u_k) + \xi_{2,N}(\alpha) \]
		with some error terms $\xi_{j,N}(\alpha)$, $j=1,2$ that satisfy
		\[ |\xi_{j,N}(\alpha)| \ll \sum_{|k-K_N| \le |K_N^* - K_N| +2} a_k^p . \]
		Lemma \ref{KNlemma} shows that outside a set of $\mu$-measure $\ll (\log N)^{-1/2}$, the previous sum has $\ll (\log N \log \log N)^{1/2}$ terms. An application of Lemma \ref{sumakptailslemma} with $K \approx (\log N \log \log N)^{1/2}$ and $t =(\log N / \log \log N)^{p^2/(2(p+1))}$ thus yields
		\[ \mu \left( \frac{|\xi_{j,N}(\alpha)|}{(\log N)^p} \ge \left( \frac{\log \log N}{\log N} \right)^{\frac{p}{2(p+1)}} \right) \ll \left( \frac{\log \log N}{\log N} \right)^{\frac{p}{2(p+1)}} . \]
		
		Lemma \ref{conditionallemma} remains true with $R$ replaced by $R_p$, using the fact that in the range $1<p<3/2$ the random variables $a_k^{p-1}$ have finite second moment. Repeating the computations in Lemma \ref{expectedvaluelemma} with $R$ replaced by $R_p$ leads to $\mathbb{E}_{\mu} (R_p(u_k) - R_p(a_k)) = \kappa_p + O(e^{-ak})$ with the constant
		\[ \begin{split} \kappa_p &= - \frac{\pi^2}{12 \log 2} \cdot \frac{2^p}{p-1} + \frac{1}{\log 2} \sum_{m=1}^{\infty} \frac{1}{m^{2p}} \sum_{n=1}^{2m^2} n^p \log \left( 1+\frac{1}{n (n+2)} \right) \\ &\phantom{={}} + \frac{\zeta (2p)}{\log 2} \lim_{N \to \infty} \left( \frac{N^{p-1}}{p-1} - \sum_{n=1}^N n^p \log \left( 1+\frac{1}{n(n+2)} \right) \right) . \end{split} \]
		To evaluate the limit in the last line, let us apply the Euler--Maclaurin summation formula
		\[ \sum_{n=1}^N h(n) = \int_1^N h(x) \, \mathrm{d}x + \frac{h(1)+h(N)}{2} + \int_1^N h'(x) (\{ x \} - 1/2) \, \mathrm{d}x \]
		to the function $h(x)=x^p \log (1+\frac{1}{x(x+2)})$, whose derivative is
		\[ h'(x) = x^{p-1} \left( p \log \left( 1+\frac{1}{x(x+1)} \right) - \frac{2}{(x+1)(x+2)} \right) \ll x^{p-3} . \]
		We have
		\[ \int_1^N x^p \frac{1}{x^2} \, \mathrm{d}x = \frac{N^{p-1}}{p-1} - \frac{1}{p-1}, \]
		and using integration by parts,
		\[ \begin{split} \int_1^N x^p &\left( \log \left( 1+\frac{1}{x(x+2)} \right) - \frac{1}{x^2} \right) \, \mathrm{d}x \\ &= - \frac{1}{p+1} \left( \log \frac{4}{3} -1 \right) +O(N^{p-2}) - \int_1^N \frac{x^{p+1}}{p+1} \left( \frac{-2}{x(x+1)(x+2)} + \frac{2}{x^3} \right) \, \mathrm{d}x \\ &=- \frac{1}{p+1} \left( \log \frac{4}{3} -1 \right) - \frac{1}{p+1} \int_1^{\infty} \frac{x^{p-2}(6x+4)}{(x+1)(x+2)} \, \mathrm{d}x +O(N^{p-2}) . \end{split} \]
		Adding the previous two formulas yields the value of $\int_1^N h(x) \, \mathrm{d}x$ up to $o(1)$ error, and finally letting $N \to \infty$ leads to the value of the limit
		\[ \begin{split} \lim_{N \to \infty} \bigg( \frac{N^{p-1}}{p-1} - &\sum_{n=1}^N n^p \log \left( 1+\frac{1}{n(n+2)} \right) \bigg) = \\ & \frac{2}{(p-1)(p+1)} - \frac{p-1}{2(p+1)} \log \frac{4}{3} + \frac{1}{p+1} \int_1^{\infty} \frac{x^{p-2} (6x+4)}{(x+1)(x+2)} \, \mathrm{d}x \\ &-\int_1^{\infty} \left( \{ x \} - \frac{1}{2} \right) x^{p-1} \left( p \log \left( 1+\frac{1}{x(x+2)} \right) - \frac{2}{(x+1)(x+2)} \right) \, \mathrm{d}x . \end{split} \]
		Lemma \ref{uktoaklemma} now reads
		\[ \sum_{\substack{k=1 \\ k \textrm{ odd}}}^{K_N} R_p(u_k) = \sum_{\substack{k=1 \\ k \textrm{ odd}}}^{K_N} R_p(a_k) + \frac{\kappa_p}{2} K_N + \xi_{1,N}(\alpha) \quad \textrm{and} \quad \sum_{\substack{k=1 \\ k \textrm{ even}}}^{K_N} R_p(u_k) = \sum_{\substack{k=1 \\ k \textrm{ even}}}^{K_N} R_p(a_k) + \frac{\kappa_p}{2} K_N + \xi_{2,N}(\alpha) \]
		with some error terms $\xi_{j,N}(\alpha)$, $j=1,2$ that satisfy
		\[ \mu \left( \frac{|\xi_{j,N}(\alpha)|}{(\log N)^p} \ge \frac{1}{(\log N)^{\frac{2p-1}{3}}} \right) \ll \frac{1}{(\log N)^{\frac{2p-1}{3}}} . \]
		Note that $\frac{2p-1}{3}>\frac{p}{2(p+1)}$, thus this error is negligible. Following the steps in Lemma \ref{gausstransformationlemma}, relying on Lemma \ref{p>1fourierlemma} instead of \eqref{hwitha1} yields
		\[ \begin{split} &\mathbb{E}_{\mu_{\mathrm{Gauss}}} (e^{i (t_1 R_p (a_1) + t_2 R_p(a_2))} -1) = \\ &- \frac{\zeta(2p)^{1/p}}{\log 2} \cos \left( \frac{\pi}{2p} \right) \Gamma \left( 1-\frac{1}{p} \right) |t_1|^{1/p} \left( 1-i \, \mathrm{sgn} (t_1) \tan \frac{\pi}{2p} \right) - i \kappa_p' t_1 \\ &- \frac{\zeta(2p)^{1/p}}{\log 2} \cos \left( \frac{\pi}{2p} \right) \Gamma \left( 1-\frac{1}{p} \right) |t_2|^{1/p} \left( 1-i \, \mathrm{sgn} (t_2) \tan \frac{\pi}{2p} \right) - i \kappa_p' t_2 + O \left( |t_1|^{1+\frac{1}{2p}} + |t_2|^{1+\frac{1}{2p}} \right) \end{split} \]
		with the constant
		\[ \kappa_p' = \zeta (2p) d_p + \frac{1}{\log 2} \sum_{m=1}^{\infty} \frac{1}{m^{2p}} \sum_{n=1}^{2m^2} n^p \log \left( 1 + \frac{1}{n(n+2)} \right) , \]
		where $d_p$ is as in \eqref{dpdefinition}. The analogue of Lemma \ref{sumRaklemma} is the following. For any even integer $K \ge 2$, let
		\[ X_K = \frac{1}{\sigma' K^p} \Bigg( \sum_{\substack{k=1 \\ k \textrm{ odd}}}^K R_p(a_k) + \frac{\kappa_p'}{2} K \Bigg) \quad \textrm{and} \quad Y_K = \frac{1}{\sigma' K^p} \Bigg( \sum_{\substack{k=1 \\ k \textrm{ even}}}^K R_p (a_k) + \frac{\kappa_p'}{2} K \Bigg) \]
		with the constant $\sigma' = \zeta(2p) \left( \frac{1}{2\log 2} \cos \left( \frac{\pi}{2p} \right) \Gamma \left( 1-\frac{1}{p} \right) \right)^p$. Then $(X_K, Y_K) \overset{d}{\to} \mathrm{Stab}(1/p,1) \otimes \mathrm{Stab}(1/p,1)$ with rate $K^{-p+1/2} \log K$ in the Kolmogorov metric, and
		\[ \frac{X_K + Y_K}{2^p} \overset{d}{\to} \mathrm{Stab}(1/p,1) \quad \textrm{and} \quad \frac{X_K - Y_K}{2^p} \overset{d}{\to} \mathrm{Stab}(1/p,0) \]
		with rate $\ll K^{-p+1/2}$ in the Kolmogorov metric. Note that $p-\frac{1}{2} > \frac{p}{2(p+1)}$, thus these errors are negligible. Altogether we obtain
		\[ \frac{1}{\sigma_N} \Bigg( \sum_{\substack{n=1 \\ 0< \langle n \alpha \rangle < 1/(2n)}}^N \frac{1}{n^p \langle n \alpha \rangle^p} + E_N, \sum_{\substack{n=1 \\ -1/(2n)< \langle n \alpha \rangle < 0}}^N \frac{1}{n^p |\langle n \alpha \rangle|^p} + E_N \Bigg) \overset{d}{\to} \mathrm{Stab}(1/p,1) \otimes \mathrm{Stab}(1/p,1) \]
		with rate $\ll (\log N)^{-\frac{p}{2(p+1)}} (\log \log N)^{\frac{p}{2(p+1)}}$ in the Kolmogorov metric, where
		\[ \sigma_N = \sigma' K_N^p = \zeta(2p) \left( \frac{6}{\pi^2} \cos \left( \frac{\pi}{2p} \right) \Gamma \left( 1-\frac{1}{p} \right) \right)^p (\log N)^p \left( 1+O \left( \frac{1}{\log N} \right) \right)   \]
		and
		\[ E_N = \frac{\kappa_p' - \kappa_p}{2} K_N = \frac{6 \log 2}{\pi^2} (\kappa_p' - \kappa_p) \log N + O(1) . \]
		Formula \eqref{pawayfromsingularity} finally yields the desired limit law with centering term $c_p \log N$, where $c_p = \frac{6 \log 2}{\pi^2} (\kappa_p' - \kappa_p) - \frac{2^{p-1}}{p-1}$ is the constant in \eqref{cp}. This finishes the proof in the case $1<p<3/2$.
		
		Assume now that $p \ge 3/2$. In this case we can simply use the evaluations \cite{BO1}
		\begin{equation}\label{formulaakp}
			\begin{split} \sum_{\substack{1 \le n < q_K \\ \langle n \alpha \rangle >0}} \frac{1}{n^p \langle n \alpha \rangle^p} &= \zeta (2p) \sum_{\substack{k=1 \\ k \textrm{ odd}}}^K a_k^p + O \left( \sum_{k=1}^K a_k^{p-1} \right), \\ \sum_{\substack{1 \le n < q_K \\ \langle n \alpha \rangle <0}} \frac{1}{n^p |\langle n \alpha \rangle|^p} &= \zeta (2p) \sum_{\substack{k=1 \\ k \textrm{ even}}}^K a_k^p + O \left( \sum_{k=1}^K a_k^{p-1} \right) . \end{split}
		\end{equation}
		Note that at the end of the proof of \cite[Theorem 3]{BO1} only the symmetric case $\sum_{1 \le n < q_K} \frac{1}{n^p \| n \alpha \|^p} = \zeta (2p) \sum_{k=1}^K a_k^p + O \left( \sum_{k=1}^K a_k^{p-1} \right)$ was stated, but the proof actually gives \eqref{formulaakp}. In particular,
		\[ \sum_{\substack{n=1 \\ \langle n \alpha \rangle >0}}^N \frac{1}{n^p \langle n \alpha \rangle^p} = \zeta (2p) \sum_{\substack{k=1 \\ k \textrm{ odd}}}^{K_N} a_k^p + \xi_{1,N}(\alpha) \quad \textrm{and} \quad \sum_{\substack{n=1 \\ \langle n \alpha \rangle <0}}^N \frac{1}{n^p |\langle n \alpha \rangle|^p} = \zeta (2p) \sum_{\substack{k=1 \\ k \textrm{ even}}}^{K_N} a_k^p + \xi_{2,N} (\alpha) \]
		with some error terms $\xi_{j,N} (\alpha)$, $j=1,2$ that satisfy
		\[ |\xi_{j,N}(\alpha)| \ll \sum_{|k-K_N| \le |K_N^* - K_N|+2} a_k^p + \sum_{k=1}^{K_N^*+1} a_k^{p-1} . \]
		Lemma \ref{KNlemma} shows that outside a set of $\mu$-measure $\ll (\log N)^{-1/2}$ the first sum in the previous formula has $\ll (\log N \log \log N)^{1/2}$ terms, whereas the second sum has at most $K_N + O((\log N \log \log N)^{1/2}) \le \log N$ terms. Hence an application of Lemma \ref{sumakptailslemma} gives
		\[ \mu \left( \frac{1}{(\log N)^p} \sum_{|k-K_N| \le |K_N^* - K_N|+2} a_k^p \ge \left( \frac{\log N}{\log \log N} \right)^{-\frac{p}{2(p+1)}} \right) \ll \left( \frac{\log N}{\log \log N} \right)^{-\frac{p}{2(p+1)}} . \]
		In the range $p>(1+\sqrt{5})/2$ (when $p-1/p>1$), Lemma \ref{sumakptailslemma} shows that
		\[ \mu \left( \frac{1}{(\log N)^p} \sum_{k=1}^{K_N^* +1} a_k^{p-1} \ge (\log N)^{-1/p} \right) \ll (\log N)^{-1/p} + (\log N)^{-1/2} . \]
		In the remaining range $3/2 \le p \le (1+\sqrt{5})/2$ Lemma \ref{sumakptailslemma} yields a similar estimate with an error term negligible compared to $(\log N / \log \log N)^{-\frac{p}{2(p+1)}}$. In particular, $\mu (|\xi_{j,N}(\alpha)|/ (\log N)^p \ge \varepsilon_N) \ll \varepsilon_N$ with $\varepsilon_N = (\log N)^{- \frac{p}{2(p+1)}} (\log \log N)^{\frac{p}{2(p+1)}} + (\log N)^{-1/p}$. The desired limit law with rate $\ll \varepsilon_N$ now follows from Lemma \ref{sumakplimitlawlemma}. This finishes the proof in the case $p \ge 3/2$.
	\end{proof}
	
	The proof of Corollary \ref{p>1corollary} is identical.
	
	\section*{Acknowledgments} Bence Borda is supported by the Austrian Science Fund (FWF) project M 3260-N. Lorenz Fr\"uhwirth and Manuel Hauke are supported by the Austrian Science Fund (FWF) project P 35322. We would like to thank the referee for valuable comments.

\end{document}